\newtheorem{thm}{Theorem}[section]
\newtheorem{cor}[thm]{Corollary}
\newtheorem{lemma}[thm]{Lemma}
\newtheorem{prop}[thm]{Proposition}
\newtheorem{defn}[thm]{Definition}
\theoremstyle{definition}
\newtheorem{remark}[thm]{Remark}
\numberwithin{equation}{section}
\def\hf{{\frac{1}{2}}}
\def\al{\alpha}
\def\be{\beta}
\def\ga{\gamma}
\def\de{\delta}
\def\ep{\varepsilon}
\def\la{\lambda}
\def\si{\sigma}
\def\vp{\varphi}
\def\De{\Delta}
\def\Ga{\Gamma}
\def\Z{\mathbb{Z}}
\def\R{\mathbb{R}}
\def\C{\mathbb{C}}
\def\N{\mathbb{N}}
\def\T{\mathbb{T}}
\def\cD{\mathcal{D}}
\def\cH{\mathcal{H}}
\def\cP{\mathcal{P}}
\def\cO{\mathcal{O}}
\def\cL{\mathcal{L}}
\def\cF{\mathcal{F}}
\def\Id{\text{\rm{Id}}}
\def\SU{\mathrm{SU}}
\def\Ker{\mathrm{Ker}}
\def\End{\mathrm{End}}
\def\free{\mathrm{free}}
\newcommand{\rphis}[5]{\,_{#1}\vp_{#2} \left( \genfrac{.}{.}{0pt}{}{#3}{#4}
\ ;#5 \right)}
\newcommand{\rFs}[5]{\,_{#1}F_{#2} \left( \genfrac{.}{.}{0pt}{}{#3}{#4}
\ ;#5 \right)}
\newcommand{\binomial}[2]{\left(\genfrac{.}{.}{0pt}{}{#1}{#2}\right)}
\newcommand{\qbinomial}[3]{\left[\genfrac{.}{.}{0pt}{}{#1}{#2}\right]_{#3}}
\begin{document}
\title[$q$-special functions]
{$q$-special functions, basic hypergeometric series\\ and operators}

\author{Erik Koelink}
\address{Radboud Universiteit, IMAPP, FNWI, PO Box 9010, 6500 GL Nijmegen,
the Netherlands}
\email{e.koelink@math.ru.nl}
\date{Final version, August 10, 2018. Lecture notes for June 2018 summer school OPSFA-S8,
Sousse, Tunisia.  \\
\hspace*{10pt} \texttt{http://www.essths.rnu.tn/OPSF-S8/acceuil.html}}

\begin{abstract}
In the lecture notes we start off with an introduction to the $q$-hypergeometric series, or basic hypergeometric series, 
and we derive some elementary summation and transformation results.
Then the $q$-hypergeometric difference equation is studied, and in particular we study solutions 
given in terms of power series at $0$ and at $\infty$. Factorisations of the corresponding operator
are considered in terms of a lowering operator, which is the $q$-derivative, and the 
related raising operator. 
Next we consider the $q$-hypergeometric operator in a special case, and we show that there
is a natural Hilbert space --a weighted sequence space-- on which this operator is symmetric.
Then the corresponding eigenfunctions are polynomials, which are the little $q$-Jacobi polynomials.
These polynomials form a family in the $q$-Askey scheme, and so many important properties are
well known. In particular, we show how the orthogonality relations and the three-term recurrence
for the little $q$-Jacobi polynomials can be obtained using only the factorisation of the 
corresponding operator. 
As a next step we consider the $q$-hypergeometric operator in general, which leads to 
the little $q$-Jacobi functions. We sketch the derivation of the corresponding 
orthogonality using the connection between various eigenfunctions. 
The link between the $q$-hypergeometric operators with different parameters is
studied in general using $q$-analogues of fractional derivatives, and this gives 
transmutation properties for this operator. 
In the final parts of these notes we consider partial extensions of this approach to 
orthogonal polynomials and special functions. The first extension is a brief introduction to 
the Askey-Wilson functions and the corresponding integral transform.
The second extension is concerned with a matrix-valued extension of the 
$q$-hypergeometric difference equation and its solutions.

\end{abstract}

\maketitle
\tableofcontents




\section{Introduction}\label{sec:Introduction}

Basic hypergeometric series have been introduced a long time ago, and 
important contributions go back to 
Euler, Heine, Rogers, Ramanujan, etc. 
The importance and the history of the basic hypergeometric series is clearly indicated in 
Askey's foreword to the book on basic hypergeometric series by Gasper and Rahman \cite{GaspR}.
Since the work of Askey, Andrews, Ismail, and coworkers many new results on 
classes of special functions and orthogonal polynomials in terms of
basic hypergeometric series have been obtained. The relation 
to representation theory of quantum groups and related structures in e.g.
mathematical physics and combinatorics has given the topic a new boost in the recent
decades.

In these lecture notes we focus on the basic hypergeometric series of type 
${}_2\varphi_1$ by studying the corresponding $q$-difference operator
to which these series are eigenfunctions. 
The study of general $q$-difference operators go back to Birkhoff and 
Trjitzinsky in the 1930s. 
In \S \ref{sec:BHS} we first introduce the basic hypergeometric series,
and we derive some elementary summation and transformation formulas needed in the sequel.
However, we will not prove all the necessary transformation formulas, but refer
to Gasper and Rahman \cite{GaspR} when necessary. Section \ref{sec:BHS} 
is based on the book \cite{GaspR} by Gasper and Rahman, which is the 
basic reference for basic hypergeometric series. 
In \S \ref{sec:BHS-qdiff} we then discuss the corresponding 
$q$-difference operator in more detail, by studying the solutions obtained 
by Frobenius's method. We also look at the decomposition of the operator
using the standard $q$-difference operator.
Next, in \S \ref{sec:BHS-qdiff-pol} we consider a special case of the 
$q$-difference operator, namely the one which can be related to 
polynomial eigenfunctions for functions supported on $q^\N$. 
This essentially leads to the little $q$-Jacobi polynomials, for 
which we derive the natural orthogonality measure, the corresponding
orthogonality relations, and the three-term recurrence relation 
by using the shift operators. These shift operators are the 
operators in factorisations of the difference operator. 

In \S \ref{sec:BHS-qdiff-nonpol} we study a more general case.
This leads to general orthogonality for ${}_2\varphi_1$-series, which 
we derive by calculating the spectral measure of the corresponding
measure. In \S \ref{sec:transmutation} we study the 
transmutation properties of the basic hypergeometric $q$-difference
operator. 
In \S \ref{sec:AW-level} we then lift this to the level of Askey-Wilson 
polynomials and the Askey-Wilson functions.  

In \S \ref{sec:MVextensions} we make a first start in order to lift the 
results on little $q$-Jacobi polynomials of \S \ref{sec:BHS-qdiff-pol} 
and little $q$-Jacobi functions of \S \ref{sec:BHS-qdiff-nonpol} 
to the matrix-valued extensions. 

There are many related results available in the literature, and 
we indicate several developments in the notes to each section. 
In particular, it is not clear if the results of \S \ref{sec:BHS-qdiff-pol} 
and \S \ref{sec:BHS-qdiff-nonpol} can be extended to the level of the 
Askey-Wilson functions  as in \S \ref{sec:AW-level} or the matrix-valued analogues of 
\S \ref{sec:MVextensions}.

By $\N$ we denote the natural numbers starting at $0$. 
The standing assumption on $q$ is $0<q<1$. 

\medskip
\textbf{Acknowledgement.} I am much indebted to the organisers,
Hamza Chaggara, Frej Chouchene, Imed Lamiri, Neila Ben Romdhane, Mohamed Gaied, of the 
summer school for their work on the summer school and their kind hospitality. 
I also thank the participants for their interest and discussions. 
The main sections of the lecture notes are based on previous papers \cite{KoelR-RMJM02}, 
\cite{KoelS-IMRN2001},
joint with Hjalmar Rosengren and Jasper Stokman, respectively. 
The lecture notes do not contain any new results, except that the description of the 
solutions of the matrix-valued $q$-hypergeometric series in \S \ref{sec:MVextensions}
have not appeared before. These solutions have been determined by Nikki Jaspers in 
her BSc-thesis \cite{Jasp} under supervision of Pablo Rom\'an and the author.

\section{Basic hypergeometric series}\label{sec:BHS}

The basic hypergeometric series are analogues of the much better known 
hypergeometric series and hypergeometric functions. 
The hypergeometric series ${}_2F_1(a,b;c;z)$ as well as the
analogous Thomae series ${}_{r+1}F_r$ and the more general
hypergeometric ${}_rF_s$-series are discussed in detail
in e.g. \cite{AndrAR}, \cite{Bail}, \cite{Isma}, \cite{KoekLS}, \cite{KoekS}, \cite{Rain}, \cite{Slat}, \cite{Temm}
and many other standard textbooks. 
Recall  the notation for that standard hypergeometric function 
\begin{equation}\label{eq:def2F1Pochhammer}
{}_2F_1(a,b;c;z)\, = \, 
\rFs{2}{1}{a,b}{c}{z} \, =\, \sum_{n=0}^\infty \frac{(a)_n(b)_n}{(c)_n\, n!} z^n, 
\end{equation}
for this series (and for its sum when it converges) assuming
$c\not= 0,-1,-2,\cdots$. This
is the \emph{(ordinary) hypergeometric
series}\index{hypergeometric series} or
the \emph{Gauss hypergeometric series}. The series converges absolutely for $|z|<1$, and
for $|z| = 1$ when $\Re(c-a-b) >0$, see Exercise \ref{ex:Raabe2F1}. 
Many important functions, such as the logarithm, arcsin, exponential,
classical orthogonal polynomials can be expressed in terms of
Gauss hypergeometric series.
 $(a)_n$ denotes the \emph{shifted
factorial}\index{shifted factorial} or \emph{Pochhammer symbol}\index{Pochhammer symbol} or 
\emph{raising factorial}\index{raising factorial} defined by
\begin{equation}\label{eq:1.2.8}
(a)_0 =1,\qquad  (a)_n = a(a+1) \cdots (a+n-1) = \frac{\Gamma
(a+n)}{\Gamma (a)},\quad n=1,2,\ldots \ .
\end{equation}
More generally, one can define hypergeometric series with more parameters. 

Around the mid 19th-century Heine  
introduced the series
\begin{equation}\label{eq:1.2.12}
1 + \frac{(1-q^a)(1-q^b)}{(1-q)(1-q^c)}z
+\frac{(1-q^a)(1-q^{a+1})(1-q^b)(1-q^{b+1})}
{(1-q)(1-q^2)(1-q^c)(1-q^{c+1})}\;z^2 + \cdots ,
\end{equation}
where it is assumed that $q \ne 1$, $c\not= 0, -1, -2, \ldots$ and the principal value of each power of $q$ is taken.
This series converges absolutely for $|z|<1$ when $|q| <1$ and it
tends termwise to Gauss' series as
$q\rightarrow 1$, because
\begin{equation}\label{eq:1.2.13}
\lim_{q\to 1} \frac{1-q^a}{1-q} = a.
\end{equation}
The ratio $(1-q^a)/(1-q)$ considered in \eqref{eq:1.2.13} is called a $q$-\emph{number}\index{q@$q$-number} (or \emph{basic number}) and it is denoted by $[a]_q$.
One should realise that other notations for $q$-numbers, such as $\frac{q^a-q^{-a}}{q-q^{-1}}$, are also in use.  
It is also called a $q$-analogue, $q$-deformation, $q$-extension, or a $q$-generalization of the complex number $a$. In terms of $q$-numbers the $q$-\emph{number factorial} $[n]_q!$ is defined for a nonnegative integer $n$ by $[n]_q! = \prod^n_{k=1} [k]_q$,
and the corresponding $q$-\emph{number shifted factorial} is defined by
$[a]_{q;n} = \prod^{n-1}_{k=0} [a+k]_q$.
Clearly,
$[a]_{q;n} = (1-q)^{-n} (q^a;q)_n$, with the notation \eqref{eq:1.2.15}  and  $\lim_{q\to 1} [a]_{q;n} = 
(a)_n$. 
The series in \eqref{eq:1.2.12} is usually called \emph{Heine's series} or,
in
view of the base $q$, the \emph{basic hypergeometric series} or
$q$-\emph{hypergeometric series}, or simply a $q$-\emph{series}.


\subsection{Notation for basic hypergeometric series}\label{ssec:BHS-notation}

Analogous to Gauss's notation for the hypergeometric function, Heine used the notation 
$\vp(a,b,c,q,z)$ for his series.
However, since one would like to also be able to consider the
case when $q$ to the
power $a,b,$ or $c$ is replaced by zero, it is now customary to
define the \emph{basic hypergeometric series}\index{basic hypergeometric series}\index{q@$q$-hypergeometric series}
\begin{equation}\label{eq:1.2.14}
\begin{split}
\vp (a,b;c;q,z) \equiv \; {}_2\vp_1 (a,b;c;q,z)
\equiv 
\rphis{2}{1}{a,b}{c}{q,z}
=\; \sum_{n=0}^{\infty} \frac{(a;q
)_n(b;q)_n}{(q;q)_n(c;q)_n}\;z^n, 
\end{split}
\end{equation}
 where
\begin{equation}\label{eq:1.2.15}
(a;q)_n = \begin{cases} 1,& n=0,\\
 (1-a)(1-aq)\cdots(1-aq^{n-1}),& n=1,2,\ldots ,
 \end{cases}
 \end{equation}
is the $q$-\emph{shifted factorial}\index{q@$q$-shifted factorial} and for general $a$ and $b$ it is assumed
that
$c\not= q^{-m}$ for $m = 0, 1, \ldots\ $.  Some
other notations that have been used in the literature for the
product $(a;q)_n$ are $(a)_{q,n}, [a]_n$ (not to be confused 
with $[a]_q$).

Unless stated otherwise, when dealing with nonterminating basic
hypergeometric series we
shall assume that $|q| <1$ and that the parameters and
variables are such that the series converges absolutely.  Note
that if $|q|>1$, then we can perform an inversion with
respect to the base by setting $p= q^{-1}$ and using the identity
\begin{equation}\label{eq:1.2.24}
(a;q)_n = (a^{-1};p)_n (-a)^np^{-{\binomial{n}{2}}}
\end{equation}
to convert the series \eqref{eq:1.2.22} to a
similar series in base $p$ with $|p| <1$, see \eqref{eq:ex1.4i}. 
The inverted
series will have a finite radius of convergence if the original
series does.

More generally, we call the series $\sum_{n=0}^\infty u_n$ 
a (unilateral) \emph{hypergeometric series}\index{hypergeometric series} if the quotient $u_{n+1}/u_n$ is
a rational function of $n$. Similarly, a series $\sum_{n=0}^\infty v_n$ 
a \emph{basic hypergeometric series}\index{basic hypergeometric series} (with base $q$) if the quotient $v_{n+1}/v_n$ is
a rational function of $q^n$ for a fixed base $q$. The most general form
of the quotient is 
\begin{equation}\label{eq:1.2.26}
\frac{v_{n+1}}{v_n} = \frac{(1-a_1q^n)(1-a_2q^n)\cdots
(1-a_rq^n)}{(1-q^{n+1}) (1-b_1q^n)\cdots
(1-b_sq^n)}\;\left(-q^n\right)^{1+s-r}z.
\end{equation}
normalising $v_0 =1$.
Generalising Heine's series, we  define an $_r\vp_s$ 
\emph{basic hypergeometric series}\index{basic hypergeometric series} by
\begin{equation}\label{eq:1.2.22}
\begin{split}
{}_r\vp_s (a_1, a_2, \ldots, a_r; b_1,\ldots,b_s;q,z)
&\equiv\; 
\rphis{r}{s}{a_1, a_2,\ldots , a_r}{b_1,\ldots ,b_s}{q,z} \\
&= \sum_{n=0}^{\infty} \frac{(a_1;q)_n (a_2;q)_n\cdots
(a_r;q)_n}{(q;q)_n (b_1 ;q)_n \cdots (b_s ;q)_n} \left[(-1)^n
q^{\binomial{n}{2}}\right]^{1+s-r}z^n
\end{split}
\end{equation}
with $\binomial{n}{2} = n(n-1)/2$, where $q\ne 0$ when $r> s+1$.

\begin{remark}\label{rmk:convergencerphis} If $0 < |q| <1$, the $_r\vp_s$ series converges absolutely
for all $z$ if $r\leq s$ and for $|z| <1$ if $r = s+1$.
This series also converges absolutely if $|q| >1$ and
$|z| < |b_1b_2\cdots b_sq|/|a_1a_2\cdots a_r|$. It diverges for
$z\not= 0$ if $0 < |q|
<1$ and $r>s+1$, and if $|q| >1$ and $|z| > |b_1b_2\cdots b_sq|/|
a_1 a_2 \cdots a_r|$,
unless it terminates.  
\end{remark}

Since products of $q$-shifted factorials occur so often, to
simplify them we shall frequently
use the more compact notations
\begin{equation}\label{eq:1.2.41} 
(a_1,a_2,\ldots, a_m;q)_n = (a_1;q)_n (a_2;q)_n\cdots (a_m;q)_n, 
\qquad n\in\mathbf{N}. 
\end{equation}

As is customary, the
$_r\vp_s$ notation is also used
for the sums of these series inside the circle of convergence and
for their analytic continuations (called \emph{basic hypergeometric functions})
outside the circle of convergence. To switch from base $q$ to base $q^{-1}$
we note 
\begin{equation}\label{eq:ex1.4i}
\rphis{r}{s}{a_1, \ldots, a_r}{b_1, \ldots, b_s}{q,z}
= \sum^\infty_{n=0} \frac{(a^{-1}_1, \ldots, a^{-1}_r; q^{-1})_n}{(q^{-
1}, b^{-1}_1, \ldots, b_s^{-1};q^{-1})_n} \left(\frac{a_1 \cdots
a_rz}{b_1 \cdots b_s q}\right)^n
\end{equation}
assuming the upper and lower parameters are non-zero.

It is important to note that in case one of the upper parameters is of the form $q^{-n}$ for
$n\in\textbf{N}$ the series in \eqref{eq:1.2.22} terminates.  
From now on, unless stated otherwise, whenever $q^{-j},
q^{-k}, q^{-m}, q^{-n}$
appear as numerator parameters in basic series it will be assumed
that $j$, $k$, $m$, $n$, respectively, are nonnegative integers.
For terminating series it is sometimes useful to switch the
order of summation, which is given by
\begin{equation}\label{eq:ex1.4ii}
\begin{split}
\rphis{r+1}{s}{a_1, \ldots, a_r, q^{-n}}{b_1, \ldots,
b_s}{q,z}
= &\frac{(a_1, \ldots, a_r ;q)_n}{(b_1\ldots, b_s;q)_n} \left(\frac{z}{q}\right)^n \left((-1)^n q^{\binomial{n}{2}} \right)^{s-r-1}\\[4pt]
&\times 
\sum^n_{k=0} \frac{(q^{1-n}/b_1, \ldots, q^{1-n}/b_s, q^{-
n};q)_k}{(q, q^{1-n}/a_1, \ldots, q^{1-n}/a_r;q)_k}
\left(\frac{b_1\cdots b_s}{a_1\cdots a_r} \frac{q^{n+1}}{z}\right)^k
\end{split}
\end{equation}
for non-zero parameters.

Observe that the series \eqref{eq:1.2.22} has the property that if we
replace $z$ by $z/a_r$ and let $a_r\rightarrow \infty$, then
the resulting series is again of the form \eqref{eq:1.2.22} with $r$
replaced by $r-1$. Because this is not the case for the
$_r\vp_s$ series defined without the factors
$\left[(-1)^nq^{\binomial{n}{2}}\right]^{1+s-r}$ in the books of Bailey \cite{Bail}  and Slater 
\cite{Slat}
and we wish to be able to handle such limit cases, we have
chosen to define the series ${}_r\vp_s$ as in \eqref{eq:1.2.22}.  There is no loss
in generality since the Bailey and Slater series can be obtained
from the $r=s+1$ case of \eqref{eq:1.2.22} by choosing $s$ sufficiently
large and setting some of the parameters equal to zero.

For negative subscripts, the
$q$-\emph{shifted factorials}\index{q@$q$-shifted factorial} as defined in \eqref{eq:1.2.15} are defined by
\begin{equation}\label{eq:1.2.28}
(a;q)_{-n} = \frac{1}{(1-aq^{-1})(1-aq^{-2})\cdots (1-aq^{-n})} = 
\frac{1}{(aq^{-n};q)_n} = \frac{(-q/a)^nq^{\binomial{n}{2}}}{(q/a;q)_n},
\end{equation}
where $n = 0,1, \ldots\ $. We also define
\begin{equation}\label{eq:1.2.29}
(a;q)_\infty = \prod^\infty_{k=0} (1-aq^k)
\end{equation}
for $|q| <1$.  Since the infinite product in
\eqref{eq:1.2.29} diverges when $a\not= 0$ and $|q| > 1$, whenever
$(a;q)_\infty$ appears in a formula, we shall assume that $|q| < 1$. 
In particular, for $|q|<1$ and $z$ an integer
\begin{equation}\label{eq:AI.6}
(a;q)_z = \frac{(a;q)_\infty}{(aq^z;q)_\infty},
\end{equation}
which is a notation that we also employ for complex $z$, where we take 
standard branch cut for the complex power. 



The
basic hypergeometric series 
\begin{equation*}
\rphis{r+1}{r}{a_1, a_2,\ldots ,a_{r+1}}{b_1,\ldots\
,b_r}{q,z}
\end{equation*}
is called $k$-\emph{balanced}\index{basic hypergeometric series 
!k@$k$-balanced}\index{k@$k$-balanced basic hypergeometric series}
if $b_1b_2\cdots b_r
=q^k a_1a_2\cdots a_{r+1}$ and $z = q$,
and a 1-balanced basic hypergeometric series is called \emph{balanced} (or \emph{Saalsch\"utzian}).  
\index{basic hypergeometric series 
!balanced}\index{balanced basic hypergeometric series}
\index{basic hypergeometric series 
!Saalsch\"utzian}\index{Saalsch\"utzian basic hypergeometric series}
The
basic hypergeometric series ${}_{r+1}\vp_r$ 
is \emph{well-poised}\index{basic hypergeometric series 
!well-poised}\index{well-poised basic hypergeometric series}
if the parameters satisfy the
relations
\begin{equation*}
qa_1 = a_2b_1 = a_3b_2 = \cdots = a_{r+1}b_r;
\end{equation*}
\emph{very-well-poised}\index{basic hypergeometric series 
!very-well-poised}\index{very-well-poised basic hypergeometric series} if, in addition, 
$a_2 = qa_1^\hf, a_3 = -qa_1^\hf$.

For very-well-poised series the following notation is in use:
\index{W@${}_{r+1}W_r$ very-well-poised series}
\index{very-well-poised series}
\begin{equation}\label{eq:defWseries} 
\begin{split}
{}_{r+1}W_r \left(a_1; a_4, a_5, \ldots,
a_{r+1};q,z\right) =& 
\rphis{r+1}{r}{a_1, qa_1^\hf, -qa_1^\hf,
a_4,\ldots,a_{r+1}}{a_1^\hf, -a_1^\hf,
qa_1/a_4,\ldots,qa_1/a_{r+1}}{q,z} \\
=& \sum_{k=0}^\infty \frac{1-a_1q^{2k}}{1-a_1} 
\frac{(a_1, a_4,\cdots, a_{r+1};q)_k}{(q, qa_1/a_4, \cdots, qa_{r+1}/a_1} z^k.
\end{split}
\end{equation}

The $q$-binomial coefficient is defined as
\begin{equation}\label{eq:AI.39}
\qbinomial{n}{k}{q}\, = \, \qbinomial{n}{n-k}{q} \, = \, \frac{(q;q)_n}{(q;q)_k\, (q;q)_{n-k}}
\end{equation}
and satisfies the following recurrences
\begin{equation}\label{eq:AI.45}
\qbinomial{n+1}{k}{q} \,=\, q^k\, \qbinomial{n}{k}{q} + \qbinomial{n}{k-1}{q} \, =\, 
 \qbinomial{n}{k}{q} + q^{n+1-k}\, \qbinomial{n}{k-1}{q}.
\end{equation}
The generalized $q$-binomial coefficient is defined for complex $\alpha$, $\beta$ by
\begin{equation}\label{eq:AI.40}
\qbinomial{\alpha}{\beta}{q}\, = \, \frac{(q^{\beta+1}, q^{\alpha-\beta+1};q)_\infty}
{(q,q^{\alpha+1};q)_\infty} 
\end{equation}
and then \eqref{eq:AI.45} remains valid for complex $\alpha$.

We end Section \ref{ssec:BHS-notation} with some useful identities for 
$q$-shifted factorials;
\begin{gather*}
(a;q)_n = \frac{(a;q)_\infty}{(aq^n;q)_\infty} \\
(a^{-1}q^{1-n};q)_n =  (a;q)_n (-a^{-1})^n q^{-{\binomial{n}{2}}}, 
(a;q)_{n-k} = \frac{(a;q)_n}{(a^{-1}q^{1-n};q)_k}\;
(-qa^{-1})^kq^{\binomial{k}{2}-nk}, \\
(a;q)_{n+k} = (a;q)_n (aq^n;q)_k, \\
(aq^n;q)_k = \frac{(a;q)_k (aq^k;q)_n}{(a;q)_n}, \\
(aq^k;q)_{n-k} = \frac{(a;q)_n}{(a;q)_k},\\
(aq^{2k};q)_{n-k} = \frac{(a;q)_n(aq^n;q)_k}{(a;q)_{2k}}, \\
(q^{-n};q)_k = \frac{(q;q)_n}{(q;q)
_{n-k}}(-1)^kq^{\binomial{k}{2}-nk}, \\
(aq^{-n};q)_k = \frac{(a;q)_k (qa^{-1};q)_n}{(a^{-1}q^{1-k};q)_n} q^{-nk}, \\
(a;q)_{2n} = (a;q^2)_n (aq;q^2)_n, \\
(a;q)_{3n} = (a;q^3)_n (aq;q^3)_n (aq^2;q^3)_n, \\
(a^2;q^2)_n = (a;q)_n (-a;q)_n, \\
(a^3;q^3)_n = (a;q)_n (\omega a;q)_n (\omega^2 a;q)_n, \qquad \omega = e^{2\pi i /3} \\
\end{gather*}
and similar expressions for $(a;q)_{kn}$ and $(a^k;q^k)_n$, $k=4,5,\cdots$.

\subsection{Some summation and transformation formulae}\label{ssec:BHS-summationtransformation}

There are many summation and transformation results for basic hypergeometric series available, 
and we only give a few basic results. We give precise references in case we need
more advanced summation or transformation formulae. 

The most fundamental result is the $q$-binomial theorem, stating\index{q-binomial@$q$-binomial theorem} 
\begin{equation}\label{eq:qbinomialthm}
\rphis{1}{0}{a}{-}{q,z} =\frac{(az;q)_\infty}{(z;q)_\infty}, \qquad |z|<1.
\end{equation}
Its terminating version reads 
\begin{equation}\label{eq:qbinomial-term}
\rphis{1}{0}{q^{-n}}{-}{q,z} = (q^{-n}z;q)_n, \qquad n\in \N.
\end{equation}
The proof is sketched in Exercise \ref{ex:qbinomthm}. We discuss a few consequences of 
the $q$-binomial theorem.

First, we write 
\begin{gather*}
\frac{(az;q)_\infty}{(z;q)_\infty} \frac{(z;q)_\infty}{(bz;q)_\infty} = 
\frac{(az;q)_\infty}{(bz;q)_\infty} \quad \Longrightarrow\quad
\rphis{1}{0}{a}{-}{q,z} \rphis{1}{0}{1/b}{-}{q,bz} =  \rphis{1}{0}{a/b}{-}{q,bz}
\end{gather*}
and this is a product of analytic functions, so that the coefficients of
the power series have to be equal. This gives
\begin{equation}\label{eq:qChuVDM}
\sum_{k+p=n} \frac{(a;q)_k}{(q;q)_k} \frac{(1/b;q)_p}{(q;q)_p} b^p = 
\frac{(a/b;q)_n}{(q;q)_n} b^n \quad \Longrightarrow \quad
\rphis{2}{1}{q^{-n},a}{c}{q,q} = \frac{(c/a;q)_n}{(c;q)_n}a^n
\end{equation}
after relabeling. This is the $q$-Chu-Vandermonde formula.\index{q-Chu@$q$-Chu-Vandermonde sum}

Another application of the $q$-binomial formula \eqref{eq:qbinomialthm} is Heine's 
transformation formula. 
Heine showed\index{Heine's transformation for ${}_2\vp_1$}
\begin{equation}\label{eq:1.4.1}
_2\varphi_1 (a,b;c;q,z) = \frac{(b,az;q)_\infty}{(c,z;q)_\infty}\;_2\varphi_1 (c/b, z; az;q,b),
\end{equation}
where $|z| <1$ and $|b| <1$. By iterating the result
\begin{equation}\label{eq:1.4.4}
 \begin{split}
_2\varphi_1 (a,b;c;q,z) &= \frac{(c/b, bz;q)_\infty}{
(c,z;q)_\infty}\;_2\varphi_1(abz/c,b;bz;q,c/b) \\[3pt]
&= \frac{(abz/c;q)_\infty}{(z;q)_\infty}\;_2\varphi_1
(c/a,c/b;c;q,abz/c).
\end{split}
\end{equation}
with appropriate conditions on the parameters for the last two series
to be convergent.  Heine's formula \eqref{eq:1.4.1} can directly be obtained 
from the $q$-binomial theorem \eqref{eq:qbinomialthm};
\begin{equation}
\begin{split}
_2\varphi_1 (a,b;c;q,z) \, =&\,  \frac{(b;q)_\infty}{(c;q)_\infty} \sum_{n=0}^{\infty}
 \frac{(a;q)_n(cq^n;q)_\infty}{(q;q)_n (bq^n;q)_\infty}z^n\\[3pt]
 =&\, \frac{(b;q)_\infty}{(c;q)_\infty} \sum_{n=0}^{\infty}
\frac{(a;q)_n}{(q;q)_n} z^n \sum_{m=0}^{\infty} \frac{(c/b;q)_m}{(q;q)_m}
(bq^n)^m\\[3pt]
=&\, \frac{(b;q)_\infty}{(c;q)_\infty} \sum_{m=0}^{\infty}
\frac{(c/b;q)_m}{(q;q)_m} b^m \sum_{n=0}^{\infty}
\frac{(a;q)_n}{(q;q)_n} (zq^m)^n\\[3pt]
=&\, \frac{(b;q)_\infty}{(c;q)_\infty} \sum_{m=0}^{\infty} \frac{(c/b;q)_m}{(q;q)_m} b^m
\frac{(azq^m;q)_\infty}{(zq^m;q)_\infty}\\[3pt]
=&\, \frac{(b,az;q)_\infty}{(c,z;q)_\infty} \rphis{2}{1}{c/b, z}{az}{q, b},
\end{split}
\end{equation}
which gives \eqref{eq:1.4.1}. The implied convergence
of the series above is assumed to hold.
Limit cases of Heine's transformation formulas \eqref{eq:1.4.1}, \eqref{eq:1.4.4} are 
\begin{equation}\label{eq:1.4.1limita}
\begin{split}
(c;q)_\infty \rphis{1}{1}{a}{c}{q,z} &\, =\,  (a,z;q)_\infty \rphis{2}{1}{c/a, 0}{z}{q, a} \\ 
&\, = \, (c/a;q)_\infty \rphis{2}{1}{az/c, a}{0}{q, \frac{c}{a}} \\ 
&\, = \, (az/c, c;q)_\infty  \rphis{2}{1}{c/a, 0}{c}{q, \frac{az}{c}} \\
&\, = \, (z;q)_\infty \rphis{1}{1}{az/c}{z}{q, c},
\end{split}
\end{equation}
so that in particular $(c;q)_\infty \, {}_1\varphi_1(0;c;q,z) =  (z;q)_\infty \, {}_1\varphi_1(0;z;q,c)$
is symmetric in $c$ and $z$. 
This symmetry is observed by Koornwinder and Swarttouw \cite{KoorS-TAMS} in their study
of the $q$-Hankel transform for the ${}_1\varphi_1$-$q$-Bessel functions. 
Taking a limit in \eqref{eq:1.4.1limita} we obtain 
\begin{equation}\label{eq:1.4.1limitb}
(c;q)_\infty \rphis{0}{1}{-}{c}{q,z} 
\, = \, (z/c, c;q)_\infty  \rphis{2}{1}{0, 0}{c}{q, \frac{z}{c}} 
\, = \,  \rphis{1}{1}{z/c}{0}{q, c} .
\end{equation}

Consider the $q$-integral on an interval $[0,a]$, defined by 
\begin{equation}\label{eq:qintegral0a}
\int_0^a f(t) \, d_q(t) = (1-q)a \sum_{k=0}^\infty q^k f(aq^k)
\end{equation}
whenever the function $f$ is such that the series in \eqref{eq:qintegral0a} converges. 
Note that we can view \eqref{eq:qintegral0a} as a Riemann sum for $\int_0^a f(t)\, dt$ on a
non-equidistant partition of the interval $[0,a]$. Using the notation \eqref{eq:qintegral0a} 
we can rewrite \eqref{eq:1.4.1} as 
\begin{gather*}
\rphis{2}{1}{a,b}{c}{q,z} = \frac{(b,c/b, z;q)_\infty}{(q,c,z;q)_\infty} 
\sum_{k=0}^\infty \frac{(q^{k+1},azq^k;q)_\infty}{ (q^kc/b, zq^k;q)_\infty} b^k \\
= \frac{(b,c/b, z;q)_\infty}{(q,c,z;q)_\infty}  \frac{1}{1-q} \int_0^1
\frac{(qt,azt;q)_\infty}{ (tc/b, tz;q)_\infty} t^{-1+\log_q b}\, d_qt
\end{gather*}
which can be considered as a $q$-analogue of Euler's integral representation 
\[
\rFs{2}{1}{a,b}{c}{z} =\frac{\Ga(c)}{\Ga(b)\Ga(c-b)}\int_0^1 t^{b-1} (1-t)^{c-b-1} (1-tz)^{-a}\, dt,
\qquad \Re c> \Re b>0.
\]
for the hypergeometric series, see e.g. \cite{AndrA}, \cite{Temm}. 

Another integral representation is the Watson integral 
representation.\index{Watson's integral representation} 
In Watson's
formula we assume $0<q<1$, and then 
\begin{equation}\label{eq:Watsonint2phi1}
\rphis{2}{1}{a,b}{c}{q,z} = 
\frac{-1}{2\pi} \frac{(a,b;q)_\infty}{(q,c;q)_\infty} 
\int_{-i\infty}^{i\infty} \frac{(q^{1+s}, cq^s;q)_\infty}{(aq^s, bq^s;q)_\infty}
\frac{\pi (-z)^s}{\sin(\pi s)} \, ds
\end{equation}
for $|z|<1$ and $|\arg(-z)|<\pi$. The contour runs from $-i\infty$ to $i\infty$ via the imaginary 
axis with indentations such that the poles of $1/\sin(\pi s)$ lie to the right of
the contour and the poles of $1/(aq^s, bq^s;q)_\infty$ lie to the left of the contour.
Then Watson's formula \eqref{eq:Watsonint2phi1} follows by a residue calculation and 
estimates on the behaviour of the integrand, see \cite[\S 4.2]{GaspR}. By then 
flipping the contour, and evaluating the integral using the residues at the poles
of $1/(aq^s, bq^s;q)_\infty$ and performing the right estimates shows the 
connection formula, see \cite[\S 4.3]{GaspR};
\begin{equation}\label{eq:4.3.2}
\begin{split}
\rphis{2}{1}{a,b}{c}{q,z}\, 
= &\, \frac{(b,c/a;q)_\infty (az,q/az;q)_\infty}
{(c,b/a;q)_\infty(z,q/z;q)_\infty}
\; \rphis{2}{1}{a,aq/c}{aq/b}{q,cq/abz}
 \\ &\, + \, \frac{(a,c/b;q)_\infty}{(c,a/b;q)_\infty} \frac{(bz,q/bz;q)_\infty}
{(z,q/z;q)_\infty}\;
\rphis{2}{1}{b,bq/c}{bq/a}{q,cq/abz}.
\end{split}
\end{equation}
which gives the analytic continuation
to the region $|\text{arg} (-z)| < \pi$,  with $c$ and
$a/b$ not integer powers of $q$, and $a,b, z \not= 0$. 
Note that the coefficients in \eqref{eq:4.3.2} are related to theta 
functions,\index{theta function}. Indeed, because of Jacobi's triple product identity,
see \cite[\S 1.6]{GaspR},
\begin{equation}\label{eq:thetafunction}
\theta(z) = (z,q/z;q)_\infty \qquad  \Longrightarrow 
\qquad  \theta(q^kz) = (-z)^{_k} q^{-\frac12 k(k-1)} \theta(z)
\end{equation}
$\theta$ is a renormalized Jacobi theta function.

\subsection{Exercises}

\begin{enumerate}[1.]
\item\label{ex:Raabe2F1} Use Raabe's test to show that ${}_2F_1(a,b;c,z)$, $|z|=1$, converges absolutely for 
$\Re(c-a-b)>0$.
\item Prove the statements on convergence of the basic hypergeometric series as in Remark \ref{rmk:convergencerphis}.
\item Prove \eqref{eq:ex1.4ii}. 
\item Prove \eqref{eq:AI.45}.
\item Prove the useful identities for $q$-shifted factorials. 
\item\label{ex:qbinomthm} Askey's proof of the $q$-binomial theorem \eqref{eq:qbinomialthm} goes as follows. 
Denote the ${}_1\vp_0$-series by $h_a(z)$ and show that
\begin{gather*}
h_a(z) \, - \, h_{aq}(z)\, = \, -az\, h_{aq}(z), \qquad 
h_a(z)\, - \, h_a(qz)\, = \, (1-a)z\, h_{aq}(z)\quad \Longrightarrow \\
 h_a(z) \, = \, \frac{1-az}{1-z}\, h_a(qz).
\end{gather*}
Iterate and use the analyticity and the value at $z=0$ to finish the proof.
\end{enumerate}

\subsection*{Notes}
The basic reference for basic hypergeometric series is the standard book \cite{GaspR} by 
Gasper and Rahman, or the first edition of \cite{GaspR}. 
The book by Gasper and Rahman contains a wealth of information on basic hypergeometric series. 
There are older books containing chapters on basic hypergeometric series,
e.g. Bailey \cite{Bail}, Slater \cite{Slat}, as well as the Heine's book --the second edition
of \emph{Handbuch der Kugelfunktionen}
of 1878, see 
references in \cite{GaspR}. 
More modern books on special functions having chapters
on basic hypergeometric series are e.g. \cite{AndrAR}, \cite{Isma}.
Another useful reference is the lecture notes by Ismail \cite{Isma-LN}.


\section{Basic hypergeometric $q$-difference equation}\label{sec:BHS-qdiff}

An important aspect of the hypergeometric series ${}_2F_1$ 
is that it can be used to describe the solutions to 
the \emph{hypergeometric differential equation}\index{hypergeometric differential equation}
\begin{equation}\label{eq:hypergeometricdiffeqtn}
z(1-z) \frac{d^2f}{dz^2}(z) + (c-(a+b+1)z) \frac{df}{dz}(z) - ab f(z) =0, 
\end{equation}
see e.g. \cite{AndrAR}, \cite{Isma}, \cite{Rain}, \cite{Temm}. 
In particular, 
\[
u_1(z) = \rFs{2}{1}{a,b}{c}{z}, \ \ c\not= 0,-1,-2,\cdots
\]
solves the hypergeometric differential equation \eqref{eq:hypergeometricdiffeqtn} as can be
checked directly by plugging the power series expansion. Other solutions expressible in 
terms of hypergeometric series are e.g. 
\begin{gather*}
u_2(z) =  z^{1-c} \rFs{2}{1}{a-c+1,b-c+1}{2-c}{z},\ \  c\not=2,3, \cdots \\
u_3(z) =  z^{-a} \rFs{2}{1}{a,a-c+1}{a-b+1}{\frac{1}{z}}, \ \ a-b\not= -1, -2, \cdots \\
u_4(z) =  z^{-b} \rFs{2}{1}{b,b-c+1}{b-a+1}{\frac{1}{z}}, \ \ b-a\not= -1, -2, \cdots.
\end{gather*}
The differential equation \eqref{eq:hypergeometricdiffeqtn} is a Fuchsian 
differential equation with three regular singular points at $0$, $1$ and $\infty$.
So one usually also considers the similar solutions in terms of power series around $z=1$,
but these solutions do not have appropriate $q$-analogues. 

So in general we have two linearly independent solutions in terms of power series around $0$ and two linearly independent solutions in 
terms of power series around $\infty$. Since the solution space of \eqref{eq:hypergeometricdiffeqtn} is $2$-dimensional, there
are all kinds of relations between these solutions. 
One of the classical relations between hypergeometric series is given by 
\begin{equation}\label{eq:rel2F1at0andinfty}
\begin{split}
\rFs{2}{1}{a,b}{c}{z} = \frac{\Ga(c)\Ga(b-a)}{\Ga(b)\Ga(c-a)} 
&(-z)^{-a} \rFs{2}{1}{a,a-c+1}{a-b+1}{\frac{1}{z}} \\
+ &\frac{\Ga(c)\Ga(a-b)}{\Ga(a)\Ga(c-b)} 
(-z)^{-b} \rFs{2}{1}{b,b-c+1}{b-a+1}{\frac{1}{z}}
\end{split}
\end{equation}
for $|\arg(-z)|<\pi$. 

The Jacobi polynomials are special cases of the hypergeometric series 
${}_2F_1$; explicitly
\begin{equation}\label{eq:defJacobipols}
P_n^{(\al,\be)}(x) = \frac{(\al+1)_n}{n!} \rFs{2}{1}{-n, n+\al+\be+1}{\al+1}{\frac12(1-x)}.
\end{equation}
So in particular, the Jacobi polynomials are eigenfunctions to a second-order differential
operator. This differential operator can then be studied on the weighted $L^2$ spaces 
with respect to the beta-weight $(1-x)^\al(1-x)^\be$ on $[-1,1]$. The 
differential operator is 
a self-adjoint operator on a suitable domain with compact resolvent.
The orthogonality of the Jacobi polynomials is related to the orthogonality of the 
eigenvectors of the corresponding differential operator. 

The Jacobi functions are 
\begin{equation}\label{eq:defJacobifunctions}
\phi^{(\al,\be)}_\la (t) = \rFs{2}{1}{\frac12(\al+\be+1+i\la),\frac12(\al+\be+1-i\la)}
{\al+1}{-\sinh^2t}
\end{equation}
and these are eigenfunctions of a related second order differential operator, after a
change of variables. The corresponding Jacobi function transform arises 
from the spectral decomposition of the differential operator, see \cite{Koor-Jacobi} 
for more information as well as the link to representation theory of non-compact 
symmetric spaces of rank one.


\subsection{Basic hypergeometric $q$-difference equation}\label{ssec:BHS-qdiff}

For fixed $q\not=1$, the $q$-derivative operator $D_q$ is defined by
\begin{equation}\label{eq:1.3.21}
D_qf \, (x) \, = \, \frac{f(x) - f(qx)}{(1-q)x},  \quad x\not=0,
\end{equation}
and $D_qf \, (0)=f'(0)$ assuming the derivative exists. 
Then $D_qf(x)$ tends to $f'(x)$ as $q\to 1$ for differentiable $f$. 
We can iterate; $D_q^nf \, = \, D_q (D_q^{n-1}f)$, $n=1,2,\cdots$. 
The $q$-difference operator $D_q$ 
applied to the ${}_2\vp_1$-series:
\begin{equation}\label{eq:ex1.12ii}
D^n_q \, \rphis{2}{1}{a, b}{c}{q, z} 
\, = \, \frac{(a,b;q)_n}{(c;q)_n(1-q)^n}\, 
\rphis{2}{1}{aq^n, bq^n}{cq^n}{q, z}
\end{equation}
which can be checked directly.
Moreover, $u(z) = \; _2\vp_1 (a,b;c;q,z)$ satisfies
(for $|z| < 1$ and in the formal power series sense) the second
order $q$-difference equation
\begin{equation}\label{eq:ex1.13}
z(c-abqz) D^2_q u + \left[ \frac{1-c}{1-q} + \frac{(1-a)(1-b) -
(1-abq)}{1-
q}z\right] D_q u\\[3pt]
- \frac{(1-a)(1-b)}{(1-q)^2} u = 0,
\end{equation}
which is a $q$-analogue of the hypergeometric differential equation 
\eqref{eq:hypergeometricdiffeqtn}. Indeed, replacing 
$a$, $b$, $c$ with $q^a$, $q^b$, $q^c$ and taking formal limits, shows that 
\eqref{eq:ex1.13} tends to \eqref{eq:hypergeometricdiffeqtn} as $q\to 1$. 
Explicitly, \eqref{eq:ex1.13} is
\begin{equation}\label{eq:ex1.13a}
(c-abz)\, u(qz) \,+\, \left(-(c+q)+ (a+b)z\right)\, u(z)\,  +\, 
(q-z)\, u(z/q) \, =  0 
\end{equation}
for $a,b,c$ non-zero complex numbers. We consider 
\eqref{eq:ex1.13a} as the 
\emph{basic hypergeometric $q$-difference equation}.\index{basic hypergeometric $q$-difference equation}
Note that if $u$ is a solution to \eqref{eq:ex1.13a}, and $C$ is a $q$-periodic function, \index{q@$q$-periodic function}
i.e. $C(qz)=C(z)$, then $Cu$ is also a solution to \eqref{eq:ex1.13a}.

\begin{prop}\label{prop:solofBqDEat0atinfty}
The functions 
\begin{equation}
\begin{split}
u_1(z) \, &= \, \rphis{2}{1}{a, b}{c}{q, z}, \quad c\not= q^{-n}, \ n=0,1,2\cdots \\
u_2(z) \, &= \, z^{1-\log_q(c)}\, \rphis{2}{1}{qa/c, qb/c}{q^2/c}{q, z}, 
\quad c\not= q^{n+2}, \ n=0,1,2\cdots, 
\end{split}
\end{equation}
and the functions
\begin{equation}
\begin{split}
u_3(z) \, &= \, z^{-\log_q(a)}\, \rphis{2}{1}{a, qa/c}{qa/b}{q, \frac{qc}{abz}}, \quad a\not= bq^{-n-1}, \ n=0,1,2\cdots, \\
u_4(z) \, &= \, z^{-\log_q(b)}\, \rphis{2}{1}{b, qb/c}{qb/a}{q, \frac{qc}{abz}}, \quad b\not= aq^{-n-1}, \ n=0,1,2\cdots.
\end{split}
\end{equation}
are solutions of the basic hypergeometric $q$-difference equation \eqref{eq:ex1.13a}. 
\end{prop}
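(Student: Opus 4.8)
The plan is to deduce all four cases from the single fact, established just above in \eqref{eq:ex1.13}--\eqref{eq:ex1.13a}, that $u_1(z) = {}_2\vp_1(a,b;c;q,z)$ solves the basic hypergeometric $q$-difference equation. Write $L_{a,b,c}$ for the second-order $q$-difference operator on the left-hand side of \eqref{eq:ex1.13a}, so that the assertion to be proved is $L_{a,b,c} u_i = 0$ for $i=1,2,3,4$, and note first that $L_{a,b,c}$ is invariant under $a \leftrightarrow b$, since only $ab$ and $a+b$ enter its coefficients. The strategy is to exhibit, for $u_2$ and $u_3$, an explicit \emph{gauge} substitution that intertwines $L_{a,b,c}$ with the operator $L_{a',b',c'}$ for the shifted parameters appearing in the respective ${}_2\vp_1$, and then to invoke the $u_1$ case for those shifted parameters; $u_4$ will then follow from $u_3$ by the $a \leftrightarrow b$ symmetry. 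One could instead substitute each series directly and verify the three-term recurrence term by term, but reducing to the $u_1$ case via these intertwiners is cleaner.

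For $u_2$, set $\mu = 1 - \log_q(c)$ and substitute $u(z) = z^\mu v(z)$. Using $q^\mu = q/c$ (hence $q^{-\mu} = c/q$) to evaluate $(qz)^\mu$ and $(z/q)^\mu$, a direct expansion shows that $z^{-\mu} L_{a,b,c}(z^\mu v)$ is a nonzero constant multiple of $L_{qa/c,\,qb/c,\,q^2/c} v$. Hence $z^\mu v$ solves \eqref{eq:ex1.13a} with parameters $(a,b,c)$ precisely when $v$ solves it with parameters $(qa/c, qb/c, q^2/c)$; taking $v(z) = {}_2\vp_1(qa/c, qb/c; q^2/c; q, z)$, which is legitimate exactly under the stated condition $c \neq q^{n+2}$, yields $u_2$. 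For $u_3$ I would combine a gauge with the inversion $w = qc/(abz)$: substitute $u(z) = z^{-\log_q(a)}\, v\bigl(qc/(abz)\bigr)$, note that $z \mapsto qz$ sends $w \mapsto w/q$ while $z \mapsto z/q$ sends $w \mapsto qw$ (so the inversion interchanges the forward and backward shifts), use $q^{-\log_q(a)} = 1/a$ for the prefactor, and rewrite the $z$-dependent coefficients of $L_{a,b,c}$ through $abz = qc/w$. After clearing denominators one finds that this substitution carries solutions of $L_{a,\,qa/c,\,qa/b} v = 0$ (in the variable $w$) to solutions of $L_{a,b,c} u = 0$; taking $v(w) = {}_2\vp_1(a, qa/c; qa/b; q, w)$, valid under $a \neq bq^{-n-1}$, produces $u_3$. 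Finally $u_4$ is obtained from $u_3$ by exchanging $a$ and $b$.

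The main obstacle is purely computational and lives in the two intertwining identities, the second being the more delicate: because $w = qc/(abz)$ reverses the direction of the $q$-shift, one must track carefully which of $u(qz), u(z), u(z/q)$ corresponds to which of $v(qw), v(w), v(w/q)$, and then check that the three resulting coefficients match those of $L_{a,\,qa/c,\,qa/b}$ up to one common nonzero scalar. The non-integer powers $z^{1-\log_q(c)}$ and $z^{-\log_q(a)}$ contribute only through the exact evaluations $q^{1-\log_q(c)} = q/c$ and $q^{-\log_q(a)} = 1/a$, so the $q$-periodic ambiguity in the branch of $z^{\log_q(\cdot)}$ (harmless by the remark after \eqref{eq:ex1.13a}) never obstructs the computation; one should, however, record that the shifted series converge on the appropriate domains ($|z|<1$ for $u_2$, and $|z| > |qc/(ab)|$, i.e.\ $|w|<1$, for $u_3$) and that the displayed parameter restrictions are exactly those making the shifted lower parameters avoid the values $q^{-n}$.
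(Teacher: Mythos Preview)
Your argument is correct, but it follows a genuinely different route from the paper's proof. The paper proceeds by the Frobenius method: it substitutes the Ansatz $u(z)=\sum_{n\geq 0} a_n z^{n+\mu}$ directly into \eqref{eq:ex1.13a}, obtains the indicial equation $(q^\mu-1)(c-q^{1-\mu})=0$, and for each root $q^\mu=1$ and $q^\mu=q/c$ solves the resulting first-order recurrence on the coefficients $a_n$ to produce $u_1$ and $u_2$ explicitly; $u_3$ and $u_4$ are obtained analogously from the Ansatz $\sum_{n\geq 0} a_n z^{-n-\mu}$ and left as an exercise. Your approach instead takes the single known solution $u_1$ as input and manufactures $u_2$, $u_3$, $u_4$ by exhibiting gauge and inversion substitutions that intertwine $L_{a,b,c}$ with $L_{a',b',c'}$ for the appropriate shifted parameters, together with the $a\leftrightarrow b$ symmetry.

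What each buys: the Frobenius method is self-contained (it does not presuppose that any solution is already known), makes the indicial structure transparent, and is the technique the paper later reuses verbatim in the matrix-valued setting of \S\ref{sec:MVextensions}, where no analogue of your intertwining identities is available. Your gauge-and-inversion argument, on the other hand, explains structurally why the other three solutions are ${}_2\vp_1$'s with those particular parameter shifts, and packages the verification into two clean operator identities rather than four separate coefficient recursions. Both the computations you outline do go through: for $u_2$ one finds $z^{-\mu}L_{a,b,c}(z^\mu v)=(c/q)\,L_{qa/c,\,qb/c,\,q^2/c}v$, and for $u_3$ the substitution $u(z)=z^{-\log_q a}\,v(qc/(abz))$ yields, after multiplying by $-aw/c$ with $w=qc/(abz)$, exactly $L_{a,\,qa/c,\,qa/b}v$.
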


Since the map $z\mapsto qz$ has two fixed points on the Riemann sphere, namely $z=0$ and $z=\infty$, 
it is natural to consider power series expansion solutions of \eqref{eq:ex1.13a} at $z=0$ and $z=\infty$ 
using the Frobenius method. 

\begin{proof} We make the Ansatz 
\[
u(z) = \sum_{n=0}^\infty a_n z^{n+\mu}, \qquad a_n\in \C, \ a_0\not= 0,\ \mu\in \C
\]
Plugging such a solution into \eqref{eq:ex1.13a} and collecting the coefficients of $z^{n+\mu}$, we require 
\begin{gather*}
0 = a_0 z^\mu \bigl( cq^\mu -(c+q) + q^{1-\mu}\bigr) + \\ 
\sum_{n=1}^\infty z^{\mu+n} \Bigl( a_n \bigl( cq^{\mu+n} -(c+q) + q^{1-\mu-n}\bigr) + a_{n-1} \bigl( -abq^{\mu+n-1} 
+ a+b - q^{1-\mu-n}\bigr) \Bigr)
\end{gather*}
So the coefficient of $z^\mu$ has to be zero, and this gives the 
\emph{indicial equation}\index{indicial equation}
\[
0 =  cq^\mu -(c+q) + q^{1-\mu} = (q^\mu-1)(c-q^{1-\mu}).
\]
So we find $q^\mu=1$ or $q^{\mu} = q/c$. 

In case $q^\mu=1$ we find the recurrence relation 
\begin{gather*}
a_n \bigl( cq^{n} -(c+q) + q^{1-n}\bigr) =  a_{n-1} \bigl( abq^{n-1} 
-  (a+b) + q^{1-n}\bigr) \quad \Longrightarrow \\
a_n  =  a_{n-1}  \frac{(aq^{n-1}-1)(b-q^{1-n})}{(cq^n-q)(1-q^{-n})}  
= a_{n-1} \frac{(1-aq^{n-1})(1-bq^{n-1})}{(1-cq^n)(1-q^{n+1})} = a_0 \frac{(a,b;q)_n}{(c,q;q)_n}, 
\end{gather*}
so we find the solution $u_1$ for $\mu=0$. If we take more generally $\mu = \frac{2\pi i}{\log q}k$, $k\in \Z$, then 
we multiply $u_1$ by the $q$-periodic function $z\mapsto z^{\frac{2\pi i}{\log q}k}$. 

In case $q^{\mu} = q/c$
we find the recurrence relation 
\begin{gather*}
a_n \bigl( q^{1+n} -(c+q) + cq^{-n}\bigr) =  a_{n-1} \bigl( \frac{ab}{c}q^{n} 
-  (a+b) + cq^{-n}\bigr) \quad \Longrightarrow \\
a_n  =  a_{n-1}  \frac{(q^n\frac{a}{c}-1)(b-cq^{-n})}{(q^n-1)(q-cq^{-n})}  
= a_{n-1} \frac{(1-q^na/c)(1-bq^n/c)}{(1-q^n)(1-q^{n+1}/c)}  = a_0 \frac{(qa/c,qb/c;q)_n}{(q,q^2/c;q)_n}, 
\end{gather*}
so we find the solution $u_2$ for $\mu = 1-\log_q(c)$, and again if we add an integer multiple of 
$\frac{2\pi i}{\log q}$, we multiply by a $q$-periodic function. 

Similarly we obtain the solutions $u_3$, $u_4$ by replacing the Ansatz by 
$u(z) = \sum_{n=0}^\infty a_n z^{-n-\mu}$, $a_n\in \C$, $a_0\not= 0$, $\mu\in \C$. 
We leave this as Exercise \ref{eq:prop:solofBqDEat0atinfty}.
\end{proof}

\begin{remark}\label{rmk:linindepsolBHDE}
Note that for generic parameters the solutions $u_1$ and $u_2$, respectively $u_3$ and $u_4$,
are linearly independent (over $q$-periodic functions). So we expect that these solutions
satisfy relations amongst each other. In particular, \eqref{eq:4.3.2} gives
\begin{gather*} 
u_1(z) = C_3(z) u_3(z) + C_4(z) u_4(z) \\
C_3(z) = C_3(z;a,b;c) =\frac{(b,c/a;q)_\infty (az,q/az;q)_\infty}
{(c,b/a;q)_\infty(z,q/z;q)_\infty} z^{\log_q(a)}, 
\qquad C_4(z) = C_3(z;b,a;c)
\end{gather*}
Note that indeed, $C_3(qz)=C_3(z)$ using \eqref{eq:thetafunction} 
and so $C_3$ and $C_4$ are $q$-periodic functions. 
\end{remark}

We rewrite the basic hypergeometric equation \eqref{eq:ex1.13a} as 
\begin{equation}\label{eq:BHqDiff-rewrite}
(c-abz)\, \frac{u(qz)-u(z)}{z} + (1-z/q) \, \frac{u(z/q)-u(z)}{z/q} 
= (1-a)(1-b) u(z).
\end{equation}
We consider the left hand side as an operator acting on functions $u$, so 
we put\index{L@$L=L^{a,b,c}$} 
\begin{equation}\label{eq:Labc-def}
\bigl(L u\bigr)(z) = \bigl(L^{a,b,c} u\bigr)(z)
= (c-abz)\, \frac{u(qz)-u(z)}{z} + (1-z/q) \, \frac{u(z/q)-u(z)}{z/q},  
\end{equation}
so that upon using the normalized $q$-difference operator, 
cf. \eqref{eq:1.3.21},\index{D@$\tilde{D}_q$}  
\[
\bigl( \tilde{D}_qu\bigr) (z) = \frac{u(qz)-u(z)}{z}
\]
we can rewrite \eqref{eq:BHqDiff-rewrite} as 
\begin{equation}\label{eq:BHqDiff-rewrite2}
\bigl(L u\bigr)(z) = 
(c-abz)\, \bigl( \tilde{D}_qu\bigr) (z) -  (1-z/q) \, \bigl( \tilde{D}_qu\bigr) (z/q) 
= (1-a)(1-b) u(z),
\end{equation}
so that the operator $L$ in 
left hand side of \eqref{eq:BHqDiff-rewrite}, \eqref{eq:BHqDiff-rewrite2} 
can be written as the composition $L=S\circ \tilde{D}_q$, where 
\begin{equation}\label{eq:defSabc}
S = S^{a,b,c}, \qquad \bigl( Sf\bigr)(z) = (c-abz)f(z) - (1-z/q)f(z/q). 
\end{equation}
Note that when acting on polynomials, $\tilde{D}_q$ maps polynomials of degree $n$ to 
polynomials of degree $n-1$ and $S$ maps polynomials of degree $n$ to 
polynomials of degree $n+1$. So we see that $L= S\circ\tilde{D}_q$ gives a
factorisation in terms of a lowering operator and a raising operator. 
It is then common to consider the reversed composition and consider it as the 
Darboux transform of $L$.\index{Darboux transform}
Lemma \ref{lem:Darboux-L} shows that the Darboux transform is again of the same 
class. 

\begin{lemma}\label{lem:Darboux-L} We have 
\[
\bigl( \tilde{D}_q \circ S^{a,b,c} f\bigr)(z) 
= \frac{1}{q} \bigl(L^{aq,bq,cq} f\bigr)(z) + (1-q)(ab-q^{-1}) f(z), 
\]
and if $L^{a,b,c}u = (1-a)(1-b)u$, then $f=\tilde{D}_qu$ satisfies 
$L^{aq,bq,cq}f = (1-aq)(1-bq)f$. 
\end{lemma}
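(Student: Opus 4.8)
The plan is to prove the operator identity first by a direct computation from the definitions, and then to read off the eigenvalue statement as an immediate consequence.

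For the identity, I would set $g=S^{a,b,c}f$, so that $g(z)=(c-abz)f(z)-(1-z/q)f(z/q)$, and apply $\tilde{D}_q$ straight from the definition $\tilde{D}_q g(z)=(g(qz)-g(z))/z$. Expanding $g(qz)$ (noting that the shift collapses the second term via $(1-qz/q)f(qz/q)=(1-z)f(z)$) and collecting, one finds that $z\,\tilde{D}_q g(z)$ is a linear combination of $f(qz)$, $f(z)$, $f(z/q)$ with coefficients $(c-abqz)$, $-(1+c-(1+ab)z)$ and $(1-z/q)$ respectively. On the other side I would expand $L^{aq,bq,cq}f(z)=(cq-abq^2z)\,\tilde{D}_q f(z)-(1-z/q)\,\tilde{D}_q f(z/q)$ using $\tilde{D}_q f(z)=(f(qz)-f(z))/z$ together with $\tilde{D}_q f(z/q)=q\bigl(f(z)-f(z/q)\bigr)/z$; the factor $q$ here, coming from the $z/q$ in the denominator, is the one bookkeeping point to watch.

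After multiplying by $1/q$ and clearing the common $1/z$, the coefficients of $f(qz)$ and $f(z/q)$ in $\tfrac1q L^{aq,bq,cq}f$ come out to be exactly $(c-abqz)$ and $(1-z/q)$, matching $z\,\tilde{D}_q g$ automatically; this is forced by the common $S$-structure on both sides, so no off-diagonal discrepancy can arise. Only the $f(z)$ coefficients differ, and the role of the correction term $(1-q)(ab-q^{-1})f$ is precisely to absorb that difference. Thus the whole identity reduces to one scalar check in the coefficient of $f(z)$: the $1/z$-parts agree trivially (both equal $-(1+c)$), while the constant parts require $1+ab=(abq+q^{-1})+(1-q)(ab-q^{-1})$, which holds since the $abq$ and $q^{-1}$ contributions cancel.

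For the second assertion I would invoke the factorisation $L=S\circ\tilde{D}_q$, which gives $S^{a,b,c}(\tilde{D}_q u)=L^{a,b,c}u=(1-a)(1-b)u$. Writing $f=\tilde{D}_q u$ and applying $\tilde{D}_q$ to both sides, using linearity and that $\tilde{D}_q$ commutes with scalar multiplication, yields $\tilde{D}_q(S^{a,b,c}f)=(1-a)(1-b)f$. Substituting the operator identity just proved and solving for $L^{aq,bq,cq}f$ gives $L^{aq,bq,cq}f=q\bigl[(1-a)(1-b)-(1-q)(ab-q^{-1})\bigr]f$, and a short expansion shows $q\bigl[(1-a)(1-b)-(1-q)(ab-q^{-1})\bigr]=1-aq-bq+abq^2=(1-aq)(1-bq)$, as claimed. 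The computation is elementary throughout; the only genuine obstacle is the bookkeeping of the evaluations at $qz$, $z$ and $z/q$. The conceptual payoff is that the $S$-structure guarantees the off-diagonal coefficients match for free, so the identity is pinned down by a single scalar equation, after which the eigenvalue shift $(1-a)(1-b)\mapsto(1-aq)(1-bq)$ falls out as pure algebra.
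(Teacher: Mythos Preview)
Your proof is correct and follows essentially the same route as the paper: a direct expansion of $\tilde{D}_q\circ S^{a,b,c}$ in terms of $f(qz)$, $f(z)$, $f(z/q)$, matched against the definition of $L^{aq,bq,cq}$, followed by the same use of the factorisation $L=S\circ\tilde{D}_q$ and the same scalar identity $q\bigl[(1-a)(1-b)-(1-q)(ab-q^{-1})\bigr]=(1-aq)(1-bq)$ for the eigenvalue shift. The only difference is presentational: you isolate the coefficients of $f(qz)$, $f(z/q)$, $f(z)$ and observe that the off-diagonal ones agree automatically so that only one scalar check remains, whereas the paper simply runs the chain of equalities without singling this out.
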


\begin{proof} A straightforward calculation gives 
\begin{gather*}
\bigl( \tilde{D}_q \circ S^{a,b,c} f\bigr)(z) = 
\frac{1}{z}\bigl( (c-abqz) f(qz) -(1-z)f(z) - (c-abz)f(z) + (1-z/q)f(z/q)\bigr) \\
= (c-abqz) \frac{f(qz)-f(z)}{z} + (1-z/q) \frac{f(z/q)-f(z)}{z/q}\frac{1}{q} 
+ ab(1-q)f(z) + (1-q^{-1})f(z)  \\
= \frac{1}{q} \left( (cq-abq^2z) \frac{f(qz)-f(z)}{z} + (1-z/q) \frac{f(z/q)-f(z)}{z/q}
\right) + (1-q)(ab-q^{-1}) f(z)
\end{gather*}
and the term in brackets is precisely the operator $L^{aq,bq,cq}$ acting on $f$ by 
\eqref{eq:Labc-def}. 

Apply the first result to $f=\tilde{D}_q u$, so that 
\begin{gather*}
\frac{1}{q} \bigl(L^{aq,bq,cq} f\bigr)(z) + (1-q)(ab-q^{-1}) f(z) 
= \bigl( \tilde{D}_q \circ S^{a,b,c} \circ \tilde{D}_q u\bigr)(z) \\ 
= \bigl( \tilde{D}_q \circ L^{a,b,c} u\bigr)(z) = 
(1-a)(1-b) \bigl(\tilde{D}_q u\bigr)(z) = (1-a)(1-b) f(z)
\end{gather*}
so that 
\begin{gather*}
\bigl(L^{aq,bq,cq} f\bigr)(z) = \bigl( q(1-a)(1-b) - q(1-q)(ab-q^{-1})\bigr) f(z) \\
= \bigl(  q -aq-bq +abq  - abq +abq^2 +1 -q)\bigr) f(z) 
= (1-aq)(1-bq) f(z).
\end{gather*}
This proves the second statement, and it is in line with \eqref{eq:ex1.12ii}.
\end{proof}


\subsection{Exercises}
\begin{enumerate}[1.]
\item Prove \eqref{eq:ex1.12ii}.
\item\label{eq:prop:solofBqDEat0atinfty} Show the second part of 
Proposition \ref{prop:solofBqDEat0atinfty}, see also Theorem \ref{thm:MVBHSatinfty}. 
\end{enumerate}

\subsection*{Notes}
The solutions follow unpublished notes by Koornwinder, and the factorisation and 
the Darboux transform seems to be well known. 


\section{Basic hypergeometric $q$-difference equation: polynomial case}\label{sec:BHS-qdiff-pol}

We first consider the basic hypergeometric $q$-difference operator on a space
which we can identify with a sequence space on $\N$. 
This is closely connected to the little $q$-Jacobi polynomials, and we derive 
its orthogonality and recurrence properties from properties of this operator. 


\subsection{The difference equation in a special case}\label{ssec:littleqJacobipols}
Replace $z=z_0q^{k+1}$ in \eqref{eq:ex1.13a} and put $u_k = u(z_0q^{k+1})$ then
we get 
\begin{gather*}
(c-abz_0 q^{k+1})\, u_{k+1} \,+\, \left(-(c+q)+ (a+b)z_0q^{k+1}\right)\, u_k\,  +\, 
(q-z_0q^{k+1})\, u_{k-1} \, =  0 \quad \Longrightarrow \\
(cq^{-k-1}-abz_0)\, u_{k+1} \,-\, (c+q)q^{-k-1}\, u_k\,  +\, 
(q^{-k}-z_0)\, u_{k-1} \, =  - (a+b)z_0\, u_k  
\end{gather*}
Note that in the special case $z_0=1$ the coefficient of $u_{-1}$ is zero.
So we consider the operator $L$ on sequences $u=(u_k)_{k\in\N}$ by 
\begin{equation}\label{eq:defDOlittleqJacobi}
\begin{split}
(L u)_k &= (cq^{-k-1}-ab)\, u_{k+1} \,-\, \bigl((cq^{-k-1}-ab) + 
(q^{-k}-1)\bigr) \, u_k\,  +\, 
(q^{-k}-1)\, u_{k-1} \\
&= (cq^{-k-1}-ab)\, (u_{k+1}-u_k)   +\, 
(q^{-k}-1)\, (u_{k-1}-u_k), \qquad k\geq 1, \\
(L u)_0 &=  (cq^{-1}-ab)\, (u_{1}-u_0)
\end{split}
\end{equation}
Note that putting $\varphi(q^k)=u_k$, we can view $L$ as 
\begin{gather*}
(L\varphi)(x) = (cq^{-1}-abx)\, \frac{\varphi(xq)-\varphi(x)}{x} + (1-x) \frac{\varphi(x/q)-\varphi(x)}{x}, 
\quad x=q^k, k\geq 1 \\
(L\varphi)(1) = (cq^{-1}-ab)\, \bigr(\varphi(q)-\varphi(1)\bigr)
\end{gather*}
so $L\varphi$ is expressible in terms of $D_q\varphi$ and $D_{q^{-1}}\varphi$. 
Considering $L$ as an operator acting on functions by
\begin{equation}\label{eq:lqJacpol-defL}
(L\varphi)(x) = (cq^{-1}-abx)\, \frac{\varphi(xq)-\varphi(x)}{x} + (1-x) \frac{\varphi(x/q)-\varphi(x)}{x}, 
\end{equation}
we see that $L$ preserves the polynomials and moreover that $L$ also preserves the degree.
Note that $L$ of \eqref{eq:lqJacpol-defL} is slightly different from the $L$ in 
\eqref{eq:Labc-def} since there is a $q$-shift in the argument. 

Letting $\C_N[x]$ be the polynomials of degree less than or equal to $N$, we see that 
$L\colon \C_N[x]\to \C_N[x]$ and moreover that $L$ is lower-triangular with respect to the 
basis $\{x^k \mid 0\leq k\leq N\}$. Then 
\[
Lx^n = \bigr(-ab (q^n-1) + (1-q^{-n})\bigr) x^n + \text{l.o.t.} =
(1-abq^{n}) (1-q^{-n}) x^n + \text{l.o.t.} 
\]
where $\text{l.o.t.}$ means `lower order terms'. 

\begin{lemma}\label{lem:lqJac-poleigfunctions}
For each degree $n$, the operator $L$ as in \eqref{eq:lqJacpol-defL} has a polynomial
eigenfunction of degree $n$ with eigenvalue $(1-abq^{n}) (1-q^{-n})$. 
\end{lemma}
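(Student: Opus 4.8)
The plan is to exploit the triangular structure of $L$ that was just recorded: on $\C_n[x]$ the operator $L$ of \eqref{eq:lqJacpol-defL} is lower-triangular with respect to the monomial basis $\{1,x,\dots,x^n\}$, with diagonal entries $\la_k=(1-abq^{k})(1-q^{-k})$ for $0\le k\le n$. To produce an eigenfunction of degree exactly $n$ for the eigenvalue $\la_n$, I would look for a monic polynomial $p_n(x)=x^n+h(x)$ with $\deg h\le n-1$ solving $(L-\la_n)p_n=0$, peeling off the coefficients from the top degree downward. The point is that finding such an eigenvector is a purely triangular linear-algebra problem once the diagonal entries are known.

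Concretely, since the coefficient of $x^n$ in $Lx^n$ equals $\la_n$, the operator $L-\la_n$ sends $x^n$ into $\C_{n-1}[x]$; write $(L-\la_n)x^n=-g$ with $g\in\C_{n-1}[x]$. By triangularity $L-\la_n$ also restricts to an endomorphism of $\C_{n-1}[x]$, which is again lower-triangular with diagonal entries $\la_m-\la_n$ for $0\le m\le n-1$. If this restriction is invertible, I can solve $(L-\la_n)h=g$ for a unique $h\in\C_{n-1}[x]$, and then $p_n=x^n+h$ is the desired monic eigenfunction of degree $n$; the same triangular argument shows it is unique up to a scalar. Thus the whole statement reduces to the invertibility of $(L-\la_n)\big|_{\C_{n-1}[x]}$, that is, to $\la_m\ne\la_n$ for $0\le m\le n-1$.

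This eigenvalue-separation is the one genuine point to check, and I expect it to be the main obstacle (everything else is routine triangular bookkeeping built on the degree-preservation established just before the lemma). Writing $\la_k=(1+ab)-(q^{-k}+ab\,q^{k})$, a short computation gives
\[
\la_n-\la_m=(q^{-m}-q^{-n})\bigl(1-ab\,q^{m+n}\bigr).
\]
Since $0<q<1$, the factor $q^{-m}-q^{-n}$ is nonzero whenever $m\ne n$, so the eigenvalues separate exactly when $ab\ne q^{-(m+n)}$. Hence for $m<n$ the required non-degeneracy is the non-resonance condition $ab\notin\{q^{-n},q^{-n-1},\dots,q^{-(2n-1)}\}$, which holds for generic $a,b$ and in particular under the standing parameter assumptions of the polynomial (little $q$-Jacobi) case that keep $ab$ away from these negative powers of $q$. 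If one wants the cleanest formulation, I would record this non-resonance hypothesis explicitly, since it is precisely what guarantees the linear system for $h$ is solvable with the solution monic of the correct degree.
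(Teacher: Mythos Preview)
Your argument is correct and follows essentially the same route as the paper: both exploit the lower-triangular action of $L$ on the monomial basis, read off the diagonal entries $\la_k=(1-abq^k)(1-q^{-k})$, and conclude that a degree-$n$ eigenvector exists once the eigenvalues are simple. You are in fact more careful than the paper on the one substantive point: you compute $\la_n-\la_m=(q^{-m}-q^{-n})(1-abq^{m+n})$ and isolate the non-resonance condition $ab\notin q^{-\{n,\dots,2n-1\}}$, whereas the paper simply asserts ``the eigenvalues are different for different $n$'' without spelling this out (it is indeed satisfied under the later little $q$-Jacobi constraints). The only thing the paper adds beyond your abstract existence argument is an explicit identification of the eigenfunction as the terminating series ${}_2\vp_1(q^{-n},abq^n;c;q,qx)$, obtained by specializing the solution $u_1$ of Proposition~\ref{prop:solofBqDEat0atinfty}; this is convenient for the sequel but not needed for the lemma as stated.
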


\begin{proof} A lower triangular operator has its eigenvalues on the diagonal. Since
the eigenvalues are different for different $n$, all eigenvalues have algebraic and 
geometric multiplicity equal to $1$. 

From the basic hypergeometric difference equation and Proposition \ref{prop:solofBqDEat0atinfty}
we see that the polynomial eigenfunction is
\[
\rphis{2}{1}{q^{-n}, abq^n}{c}{q,qx}.
\]
\end{proof}

Consider the Hilbert space $\ell^2(\N,w)$\index{l@$\ell^2(\N,w)$} of weighted $\ell^2$-sequences with 
inner product 
\[
\langle u, v\rangle = \sum_{k=0}^\infty u_k \overline{v_k} w_k 
\]
for some positive sequence $w=(w_k)_{k\in \N}$.

\begin{prop}\label{prop:Lsaonell2w}
Take $0<q<1$, $ab, c\in\R$, then with 
\[
w_k = c^k \frac{(abq/c;q)_k}{(q;q)_k}, \qquad c>0, abq<c, 
\]
the operator $L$ with $D(L) = \{ u=(u_k)_{k\in\N} \mid u_k\not=0 \text{ for at most finitely many } k\in\N\}$
is symmetric;
\[
\langle Lu, v\rangle =  \langle u, Lv\rangle, \qquad \forall \, u, v\in D(L).
\]
\end{prop}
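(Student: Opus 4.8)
The plan is to verify symmetry of the operator $L$ by a direct computation, exploiting the three-term structure of $L$ on the weighted sequence space. Since $D(L)$ consists of finitely supported sequences, all the sums are finite and no convergence issues arise; the content is purely algebraic. Writing $L$ in the form
\[
(Lu)_k = A_k\,(u_{k+1}-u_k) + B_k\,(u_{k-1}-u_k), \qquad A_k = cq^{-k-1}-ab,\quad B_k = q^{-k}-1,
\]
with the convention $B_0=0$ (so that the boundary term at $k=0$ is automatically handled), I would expand
\[
\langle Lu,v\rangle = \sum_{k=0}^\infty (Lu)_k\,\overline{v_k}\,w_k
\]
and regroup the double sum. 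The key observation is that a three-term difference operator of this shape is symmetric on $\ell^2(\N,w)$ precisely when the off-diagonal coefficients satisfy the detailed-balance-type condition $A_k\,w_k = B_{k+1}\,w_{k+1}$ for all $k\geq 0$; this is the discrete analogue of the Sturm--Liouville self-adjointness condition.

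Concretely, the first step is to reduce the symmetry identity $\langle Lu,v\rangle = \langle u,Lv\rangle$ to this single scalar condition. Collecting the coefficient of $u_{k+1}\overline{v_k}$ on the left against the coefficient of $u_k\overline{v_{k+1}}$ on the right (after shifting the summation index), one sees that the diagonal terms match trivially, and the cross terms match if and only if $A_k w_k = B_{k+1} w_{k+1}$. The second step is then the verification of this identity for the given weight $w_k = c^k (abq/c;q)_k/(q;q)_k$. Using \eqref{eq:1.2.15} one computes
\[
\frac{w_{k+1}}{w_k} = c\,\frac{1-(abq/c)q^k}{1-q^{k+1}} = \frac{c-abq^{k+1}}{1-q^{k+1}},
\]
and it then remains to check that $A_k/B_{k+1}$ equals this same ratio. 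Since $A_k = cq^{-k-1}-ab = q^{-k-1}(c-abq^{k+1})$ and $B_{k+1} = q^{-k-1}-1 = q^{-k-1}(1-q^{k+1})$, their quotient is indeed $(c-abq^{k+1})/(1-q^{k+1})$, matching $w_{k+1}/w_k$.

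The only genuine care needed is at the boundary $k=0$, where the definition \eqref{eq:defDOlittleqJacobi} gives $(Lu)_0 = (cq^{-1}-ab)(u_1-u_0)$ with no $u_{-1}$ term; this is consistent with setting $B_0=0$, so the balance condition is only required for $k\geq 0$ relating levels $k$ and $k+1$, and the $k=0$ boundary introduces no extra constraint. I would also note where the hypotheses $c>0$ and $abq<c$ enter: they guarantee $w_k>0$ for all $k$ (each factor $1-(abq/c)q^k$ is positive since $abq/c<1$ and $0<q<1$), so that $\ell^2(\N,w)$ is a genuine weighted Hilbert space and the inner product is positive definite. I do not expect any serious obstacle here; the main ``work'' is organizing the regrouping of the double sum cleanly so that the reduction to $A_k w_k = B_{k+1} w_{k+1}$ is transparent, after which the verification is a one-line $q$-shifted-factorial computation.
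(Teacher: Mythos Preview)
Your proof is correct and follows essentially the same approach as the paper: both reduce the symmetry of $L$ to the balance condition $A_k w_k = B_{k+1}w_{k+1}$ (the paper writes this as $(cq^{-k-1}-ab)w_k = (q^{-k-1}-1)w_{k+1}$, see \eqref{eq:littleqJac-weight}) and then verify it via the $q$-shifted-factorial recursion for $w_k$. The only difference is organizational: the paper carries out the computation with truncated inner products $\langle\cdot,\cdot\rangle_N$ and thereby also extracts the explicit Wronskian-type boundary term \eqref{eq:LuvN-uLvN-lqJac}, which it uses immediately afterward to analyze polynomial eigenfunctions, whereas your detailed-balance framing is cleaner for the bare symmetry statement but does not record that boundary formula.
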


\begin{proof} We consider for finite sequences $(u)_k$, $(v)_k$ the difference of the inner products.
Note that in particular all sums are finite, so that absolute convergence of all series involved is automatic.

We do the calculation slightly more general by not making any assumptions on the sequences $(u)_k$, $(v)_k$, 
but by chopping off the inner product. Denote 
$\langle u, v\rangle_N = \sum_{k=0}^N u_k \overline{v_k} w_k$ 
then obviously $\lim_{N\to\infty} \langle u, v\rangle_N = \langle u, v\rangle$ for any two sequences 
$(u)_k$, $(v)_k \in \ell^2(\N,w)$. Now 
\begin{gather*}
\langle Lu, v\rangle_N 
-  \langle u, Lv\rangle_N = (cq^{-1}-ab)(u_1-u_0)\overline{v_0}w_0 -
u_0 \overline{(cq^{-1}-ab)(v_1-v_0)}w_0 + 
\\
\sum_{k=1}^N \Bigl( (cq^{-k-1}-ab)\, (u_{k+1}-u_k)   +\, 
(q^{-k}-1)\, (u_{k-1}-u_k)\Bigr) \overline{v_k} w_k - \\
\sum_{k=1}^N  u_k \overline{\Bigl( (cq^{-k-1}-ab)\, (v_{k+1}-v_k)   +\, 
(q^{-k}-1)\, (v_{k-1}-v_k)\Bigr)} w_k = \\
(cq^{-1}-ab)w_0\bigl( u_1\overline{v_0} - u_0\overline{v_1}\bigr) + 
\sum_{k=1}^N \Bigl( (cq^{-k-1}-ab)\, u_{k+1}  +\, 
(q^{-k}-1)\, u_{k-1}\Bigr) \overline{v_k} w_k - \\
\sum_{k=1}^N  u_k \overline{\Bigl( (cq^{-k-1}-ab)\, v_{k+1} +
(q^{-k}-1)\, v_{k-1}\Bigr)} w_k
\end{gather*}
since the coefficients are real. Relabeling gives 
\begin{gather*}
\langle Lu, v\rangle_N 
-  \langle u, Lv\rangle_N = (cq^{-1}-ab)w_0\bigl( u_1\overline{v_0} - u_0\overline{v_1}\bigr) + \\ 
\sum_{k=2}^{N+1} u_k (cq^{-k}-ab)\overline{v_{k-1}} \frac{w_{k-1}}{w_k} w_k -
\sum_{k=1}^N  u_k 
(q^{-k}-1)\, \overline{v_{k-1}} w_k \\
+ \sum_{k=0}^{N-1}  
u_{k} (q^{-1-k}-1)\,  \overline{v_{k+1}} \frac{w_{k+1}}{w_k} w_k - 
\sum_{k=1}^N  u_k(cq^{-k-1}-ab)\, \overline{v_{k+1}} w_k.
\end{gather*}
Since we want $L$ to be symmetric, most of the terms have to cancel for all sequences $(u)_k$, $(v)_k$.
So we need to impose 
\begin{equation}\label{eq:littleqJac-weight}
\begin{split}
(cq^{-k}-ab) \frac{w_{k-1}}{w_k} =  
(q^{-k}-1),\qquad 
(q^{-1-k}-1) \frac{w_{k+1}}{w_k} = 
(cq^{-k-1}-ab)
\end{split}
\end{equation}
which give the same recurrence relation for $w_k$;
\begin{equation*}
w_k = \frac{(cq^{-k}-ab)}{(q^{-k}-1)} w_{k-1} = 
c \frac{(1-abq^k/c)}{(1-q^k)} w_{k-1} = c^k \frac{(abq/c;q)_k}{(q;q)_k} w_0.
\end{equation*}
Since we need $w_k>0$, we require $c>0$ and $abq/c<1$. 

Taking this value for $w_k$ we see that most of the terms in the sum cancels, and we 
get 
\begin{gather*}
\langle Lu, v\rangle_N 
-  \langle u, Lv\rangle_N = 
(cq^{-1}-ab)w_0\bigl( u_1\overline{v_0} - u_0\overline{v_1}\bigr) + \\
 u_{N+1} (q^{-N-1}-1) \overline{v_N} w_{N+1} - u_1(q^{-1}-1) \overline{v_0} w_1 \\
 + u_0(cq^{-1}-ab)\overline{v_1}w_0 
-  u_N(cq^{-N-1}-ab)\, \overline{v_{N+1}} w_{N} \\
= u_{N+1} (q^{-N-1}-1) \overline{v_N} w_{N+1} - u_N(cq^{-N-1}-ab)\, \overline{v_{N+1}} w_{N}.
\end{gather*}
Using \eqref{eq:littleqJac-weight} one obtains
\begin{equation}\label{eq:LuvN-uLvN-lqJac}
\langle Lu, v\rangle_N 
-  \langle u, Lv\rangle_N = 
\sqrt{(q^{-N-1}-1)(cq^{-N-1}-ab)} \sqrt{w_Nw_{N+1}} \bigl( 
u_{N+1} \overline{v_N} - u_N \, \overline{v_{N+1}}\bigr) 
\end{equation}
In particular, for finitely supported sequences 
$(u)_k$, $(v)_k$ we have $\langle Lu, v\rangle =  \langle u, Lv\rangle$ by taking $N\gg0$, so that
$L$ is symmetric with respect to the domain of the finitely supported sequences in $\ell^2(\N,w)$.
\end{proof}

\begin{remark}\label{rmk:spectralLlittleqJacobi}
We do not study the spectral analysis of $L$, but 
see \S \ref{ssec:AlSalamChihara-littleqJacobi}. In the analysis the form 
in \eqref{eq:LuvN-uLvN-lqJac} is closely related to the study of self-adjoint 
extensions of $L$. 
\end{remark}

Let us now assume that $u$ and $v$ correspond to the polynomial eigenvalues of 
$L$ as in Lemma \ref{lem:lqJac-poleigfunctions}. So this means 
$u_k = A + Bq^k + \cO(q^{2k})$, $v_k = C + Dq^k + \cO(q^{2k})$ as $k\to \infty$. For 
these values we see that \eqref{eq:LuvN-uLvN-lqJac} gives
\begin{gather*}
\langle Lu, v\rangle_N 
-  \langle u, Lv\rangle_N = \\ 
\sqrt{(q^{-N-1}-1)(cq^{-N-1}-ab)} \sqrt{w_Nw_{N+1}} \Bigl( 
\bigl(A + Bq^{N+1} + \cO(q^{2N})\bigr)
\bigl( \bar C + \bar Dq^N + \cO(q^{2N}) \bigr) - \\
\bigl(A + Bq^{N} + \cO(q^{2N})\bigr)
\bigl( \bar C + \bar Dq^{N+1} + \cO(q^{2N}) \bigr)
\Bigr) = \\
\sqrt{(q^{-N-1}-1)(cq^{-N-1}-ab)} \sqrt{w_Nw_{N+1}} \Bigl( (B\bar C q +A\bar D) q^N
- (B\bar C + A \bar D q) q^N + \cO(q^{2N}) \Bigr) = \\
\sqrt{(q^{-1}-q^N)(cq^{-1}-abq^N)} \sqrt{w_Nw_{N+1}} 
\Bigl( (B\bar C - A\bar D) (q-1) + \cO(q^{N}) \Bigr).
\end{gather*}
In particular, since we want to 
$\lim_{N\to\infty} \langle Lu, v\rangle_N 
-  \langle u, Lv\rangle_N = 0$ for such sequences $(u_k)$ and $(u_k)$ and 
since $w_N = \cO(c^N)$, we need $c<1$.
Assuming $0<c<1$ and $abq<c$ we see that any polynomial $p$ gives a 
sequence $(p(q^k))_k \in \ell^2(\N,w)$. 

\subsection{The little $q$-Jacobi polynomials}

In order to get to the little $q$-Jacobi polynomials we relabel. We (re-)define the 
polynomials from Lemma \ref{lem:lqJac-poleigfunctions} as the little $q$-Jacobi polynomials;
\index{little $q$-Jacobi polynomials}
\begin{equation}\label{eq:deflqJacpols}
p_n(x;\al,\be;q) = \rphis{2}{1}{q^{-n},\al\be q^{n+1}}{\al q}{q, qx}. 
\end{equation}
So we have specialized $(a,b,c)$ to $(q^{-n}, \al\be q^{n+1}, \al q)$ and the 
conditions $0<c<1$, $abq<c$ translate into $0<\al<q^{-1}$, $\be<q^{-1}$.  
By \eqref{eq:ex1.12ii} we have 
\begin{equation}\label{eq:DqonlittleqJac}
\begin{split}
\bigl( {D}_qp_n(\cdot;\al,\be;q)\bigr)(x) &= \frac{(1-q^{-n})(1-\al\be q^{n+1})}{(1-c)(1-q)} 
\rphis{2}{1}{q^{1-n},\al\be q^{n+2}}{\al q^2}{q, qx} \\
&=
\frac{(1-q^{-n})(1-\al\be q^{n+1})}{(1-c)(1-q)} p_{n-1}(x;\al q,\be q;q)
\end{split}
\end{equation}
Let us also rename the weight of Proposition \ref{prop:Lsaonell2w} to the new labeling.
We get 
\[
w(\al,\be;q)_k = (\al q)^k \frac{(\be q;q)_k}{(q;q)_k} 
\]
and we denote the corresponding Hilbert space $\ell^2(\N,w)$ by $\ell^2(\N;\al,\be;q)$.
A polynomial sequences in $\ell^2(\N;\al,\be;q)$ is a sequence of the form 
$u_k= p(q^k)$ for some polynomial $p$. Note that these sequences are indeed in 
$\ell^2(\N;\al,\be;q)$, and we denote them by $\cP$. 

\begin{lemma}\label{lem:adjointDq-lqJacpol}
$\tilde{D}_q$ is an unbounded map from $\ell^2(\N;\al,\be;q)$ to 
$\ell^2(\N;\al q,\be q;q)$. As its domain we take the polynomial sequences $\cP$.
Then we have 
\[
\langle \tilde{D}_q p, r \rangle_{\ell^2(\N;\al q,\be q;q)} 
= \langle p, S^{\al,\be} r \rangle_{\ell^2(\N;\al,\be;q)}, \qquad \forall \, p,r\in\cP
\]
where 
\[
S^{\al,\be} \colon \cP \to \cP, \qquad 
\bigl( S^{\al,\be} r \bigr)_k =  
(\al q)^{-1} \frac{(1-q^k)}{(1-\be q)} r(q^{k-1}) -  
\frac{(1- \be q^{k+1})}{(1-\be q)}r(q^k)
\]
and $-\al q (1-\be q)S^{\al,\be}$ 
corresponds to $S^{a,b,c}$ as in \eqref{eq:defSabc} with $a=q^{-n}$, $b=\al\be q^{n+1}$,
$c=\al q$ and $z=q^{k+1}$.
\end{lemma}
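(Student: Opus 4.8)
The plan is to reduce the adjoint identity to two elementary identities between the two weight sequences $w(\al,\be;q)_k$ and $w(\al q,\be q;q)_k$, after writing both inner products as explicit sums and performing a single reindexing. First I would record the action of $\tilde{D}_q$ on a polynomial sequence: writing $p_k=p(q^k)$ and using $(\tilde{D}_q u)(z)=(u(qz)-u(z))/z$ from \eqref{eq:BHqDiff-rewrite2} at $z=q^k$, one has $(\tilde{D}_q p)_k=q^{-k}\bigl(p(q^{k+1})-p(q^k)\bigr)$. Since $\tilde{D}_q$ lowers the degree by one, $\tilde{D}_q p$ is again a polynomial sequence, and under the standing assumptions $0<\al<q^{-1}$, $\be<q^{-1}$ the weights $w(\al q,\be q;q)_k$ decay geometrically, so $\tilde{D}_q p\in\ell^2(\N;\al q,\be q;q)$ and the map $\cP\to\cP$ is well defined. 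Unboundedness is immediate since a difference quotient is not bounded on the full sequence space; the content of the lemma is the adjoint formula.

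Next I would write
\[
\langle \tilde{D}_q p, r\rangle_{\ell^2(\N;\al q,\be q;q)}
= \sum_{k=0}^\infty q^{-k}\bigl(p(q^{k+1})-p(q^k)\bigr)\,\overline{r(q^k)}\, w(\al q,\be q;q)_k,
\]
split it into two absolutely convergent series, and shift $k\mapsto k-1$ in the part carrying $p(q^{k+1})$, producing
\[
\sum_{k=1}^\infty q^{1-k} p(q^k)\,\overline{r(q^{k-1})}\, w(\al q,\be q;q)_{k-1}
-\sum_{k=0}^\infty q^{-k} p(q^k)\,\overline{r(q^k)}\, w(\al q,\be q;q)_k.
\]
On the other side, since the coefficients of $S^{\al,\be}$ are real, $\langle p, S^{\al,\be} r\rangle_{\ell^2(\N;\al,\be;q)}$ expands as $\sum_{k=0}^\infty p(q^k)\,\overline{(S^{\al,\be}r)_k}\,w(\al,\be;q)_k$, whose $r(q^{k-1})$-part begins at $k=1$ because the factor $(1-q^k)$ annihilates the $k=0$ term; this is exactly why the reindexed sum above also starts at $k=1$.

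It then remains to match coefficients term by term, which amounts to the two identities, for $k\ge 1$,
\[
q^{-k} w(\al q,\be q;q)_k = \frac{1-\be q^{k+1}}{1-\be q}\, w(\al,\be;q)_k,
\qquad
q^{1-k} w(\al q,\be q;q)_{k-1} = \frac{(\al q)^{-1}(1-q^k)}{1-\be q}\, w(\al,\be;q)_k.
\]
Substituting $w(\al,\be;q)_k=(\al q)^k(\be q;q)_k/(q;q)_k$ and the analogous expression with $(\al q,\be q)$, these follow from the telescoping relations $(\be q^2;q)_k=(1-\be q^{k+1})(\be q;q)_k/(1-\be q)$ and $(\be q;q)_k/(1-\be q)=(\be q^2;q)_{k-1}$, together with $(1-q^k)/(q;q)_k=1/(q;q)_{k-1}$ and the power bookkeeping $q^{-k}(\al q^2)^k=(\al q)^k$, $q^{1-k}(\al q^2)^{k-1}=(\al q)^{k-1}$. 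The adjoint formula is then immediate.

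Finally, for the stated correspondence I would substitute $a=q^{-n}$, $b=\al\be q^{n+1}$ (so $ab=\al\be q$) and $c=\al q$ into \eqref{eq:defSabc} and evaluate at $z=q^{k+1}$, obtaining $(S^{a,b,c}f)(q^{k+1})=\al q(1-\be q^{k+1})f(q^{k+1})-(1-q^k)f(q^k)$; comparing with $-\al q(1-\be q)(S^{\al,\be}r)_k$ under the identification $r_k=f(q^{k+1})$ (the convention $z=z_0q^{k+1}$ with $z_0=1$ of \S\ref{ssec:littleqJacobipols}) gives exact agreement. The only real difficulty is bookkeeping: tracking the index shift in the summation-by-parts simultaneously with the parameter shift $(\al,\be)\mapsto(\al q,\be q)$ between the two weight sequences. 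There is no analytic subtlety, since all series converge absolutely and the rearrangement is merely a reindexing with no surviving boundary term.
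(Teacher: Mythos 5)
Your proof is correct and follows essentially the same route as the paper: expand $\langle \tilde{D}_q p,r\rangle_{\ell^2(\N;\al q,\be q;q)}$, shift the index in the term carrying $p(q^{k+1})$, and convert the weight $w(\al q,\be q;q)$ into $w(\al,\be;q)$ via the same $q$-shifted factorial identities, with the $k=0$ boundary term absorbed because $(1-q^k)$ vanishes there. The only difference is presentational — you verify the given formula for $S^{\al,\be}$ by matching coefficients (and additionally check the stated correspondence with $S^{a,b,c}$, which the paper's proof leaves implicit), whereas the paper reads the same computation as deriving $S^{\al,\be}$.
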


\begin{proof} Note that 
\begin{gather*}
\langle \tilde{D}_q p, r \rangle_{\ell^2(\N;\al q,\be q;q)} 
= \sum_{k=0}^\infty \frac{p(x^{k+1})-p(q^k)}{q^k} \overline{r(q^k)} 
(\al q^2)^k \frac{(\be q^2;q)_k}{(q;q)_k} \\
= \sum_{k=1}^\infty p(x^{k}) \overline{r(q^{k-1})} 
(\al q)^{k-1} \frac{(\be q^2;q)_{k-1}}{(q;q)_{k-1}} -  \sum_{k=0}^\infty p(x^{k}) \overline{r(q^k)} 
(\al q)^k \frac{(\be q^2;q)_k}{(q;q)_k} \\ 
= - p(1)\overline{r(1)} + 
\sum_{k=1}^\infty p(x^{k}) \left( \overline{r(q^{k-1})} 
(\al q)^{k-1} \frac{(\be q^2;q)_{k-1}}{(q;q)_{k-1}} - \overline{r(q^k)} 
(\al q)^k \frac{(\be q^2;q)_k}{(q;q)_k}\right)  \\ 
= - p(1)\overline{r(1)} + 
\sum_{k=1}^\infty p(x^{k}) \left( \overline{r(q^{k-1})} 
(\al q)^{-1} \frac{(1-q^k)}{(1-\be q)} - \overline{r(q^k)} 
\frac{(1- \be q^{k+1})}{(1-\be q)}\right)  (\al q)^k \frac{(\be q;q)_k}{(q;q)_k}.
\end{gather*}
In this derivation we use that all sums converge absolutely, 
so that we can split the series and rearrange them. This calculation gives, using that 
$\al,\be, q\in \R$, 
\[
\bigl( S^{\al,\be} r \bigr)_k =  
(\al q)^{-1} \frac{(1-q^k)}{(1-\be q)} r(q^{k-1}) -  
\frac{(1- \be q^{k+1})}{(1-\be q)}r(q^k)
\]
which is again in $\cP$ since $r\in \cP$. Note that the formula is also valid for $k=0$. 

We leave the fact that $\tilde{D}_q$ is unbounded to the reader. 
\end{proof}

Note that $S^{\al,\be}$ raises the degree of the polynomial by $1$. 
We have already observed how $\tilde{D}_q$ acts on a little $q$-Jacobi polynomial, but 
we also want to find out for $S^{\al,\be}$.

\begin{prop}\label{prop:adjointDq-lqJacpol2}
The little $q$-Jacobi polynomials are orthogonal in $\ell^2(\N;\al,\be;q)$.
Moreover, 
\begin{gather*}
\bigl( \tilde{D}_qp_n(\cdot;\al,\be;q)\bigr)(x) = 
\frac{(1-q^{-n})(1-\al\be q^{n+1})}{(1-c)} p_{n-1}(x;\al q,\be q;q),  \\
\bigl( S^{\al,\be} p_{n-1}(\cdot;\al q,\be q;q)\bigr)(x) 
= \frac{1}{\al q} \frac{1-\al q}{1-\be q} p_n(x;\al,\be;q)
\end{gather*}
\end{prop}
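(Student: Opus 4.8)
The plan is to establish both differentiation/raising formulas first, and then deduce orthogonality from them together with the symmetry structure already in place.

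For the first formula, I would simply invoke the computation already recorded in equation \eqref{eq:DqonlittleqJac}, which states that $\bigl(D_q p_n(\cdot;\al,\be;q)\bigr)(x)$ equals $\frac{(1-q^{-n})(1-\al\be q^{n+1})}{(1-c)(1-q)} p_{n-1}(x;\al q,\be q;q)$. Since $\tilde{D}_q$ and $D_q$ differ only by the normalization $\bigl(\tilde{D}_q f\bigr)(x) = (1-q)x\,(D_q f)(x)$ evaluated appropriately — more precisely $\tilde{D}_q f(x) = \frac{f(qx)-f(x)}{x}$ while $D_q f(x) = \frac{f(x)-f(qx)}{(1-q)x}$, so $\tilde{D}_q = -(1-q) D_q$ — the stated constant $\frac{(1-q^{-n})(1-\al\be q^{n+1})}{(1-c)}$ should follow by absorbing the factor $(1-q)$ and tracking the sign. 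I would verify the sign and constant carefully, as this is the one place a routine slip could occur; with $c=\al q$ the denominator $(1-c)=(1-\al q)$ is the same object appearing in the second formula.

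For the second (raising) formula, the strategy is to combine the adjoint relation from Lemma \ref{lem:adjointDq-lqJacpol} with the factorization. We know that $L = S \circ \tilde{D}_q$ in the polynomial setting, and from Lemma \ref{lem:lqJac-poleigfunctions} that $L p_n(\cdot;\al,\be;q) = (1-abq^n)(1-q^{-n}) p_n(\cdot;\al,\be;q)$ with $a=q^{-n}$, $b=\al\be q^{n+1}$, so $ab q^n = \al\be q^{n+1}$ and the eigenvalue is $(1-\al\be q^{n+1})(1-q^{-n})$. Applying $S^{\al,\be}$ to the first formula and using that the composition $-\al q(1-\be q) S^{\al,\be}$ corresponds to $S^{a,b,c}$ while $L = S^{a,b,c}\circ \tilde{D}_q$, I would get that $S^{\al,\be} p_{n-1}(\cdot;\al q,\be q;q)$ is a multiple of $p_n(\cdot;\al,\be;q)$, and I would pin down the constant by matching eigenvalues: the product of the lowering constant, the raising operator's normalization $\frac{1}{-\al q(1-\be q)}$, and the relation to $L$'s eigenvalue must produce $\frac{1}{\al q}\frac{1-\al q}{1-\be q}$. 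The cleanest route is to compute $\bigl(S^{\al,\be}\circ\tilde{D}_q\bigr) p_n$ two ways and solve for the unknown scalar in $S^{\al,\be} p_{n-1} = \kappa\, p_n$.

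For orthogonality, the plan is a standard adjoint argument. Since $S^{\al,\be}$ is the adjoint of $\tilde{D}_q$ (Lemma \ref{lem:adjointDq-lqJacpol}) and $L = S^{\al,\be}\circ \tilde{D}_q$ up to the fixed constant, $L$ is of the form $T^* T$ composed with a constant on $\cP$, hence symmetric on $\cP$ — consistent with Proposition \ref{prop:Lsaonell2w}. Because the $p_n$ are eigenfunctions of the symmetric operator $L$ with the \emph{distinct} eigenvalues $(1-\al\be q^{n+1})(1-q^{-n})$ for distinct $n$ (here I would check distinctness using $0<\al<q^{-1}$, $\be<q^{-1}$, $0<q<1$), eigenfunctions for different eigenvalues are orthogonal in $\ell^2(\N;\al,\be;q)$. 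The main obstacle I anticipate is not the orthogonality but correctly bookkeeping all the multiplicative constants and signs linking $D_q$, $\tilde{D}_q$, $S^{\al,\be}$, $S^{a,b,c}$, and the eigenvalue of $L$, so that the raising formula comes out with the precise scalar $\frac{1}{\al q}\frac{1-\al q}{1-\be q}$ rather than something off by a factor of $(1-q)$ or a sign. I would therefore isolate the constant-chasing into a single short displayed computation rather than scatter it.
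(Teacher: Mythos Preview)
Your proposal is correct but differs from the paper's proof in two places.

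For the raising-operator constant, the paper does not match eigenvalues. Having argued from the factorization that $S^{\al,\be}p_{n-1}(\cdot;\al q,\be q;q)$ must be a scalar multiple of $p_n(\cdot;\al,\be;q)$, it fixes the scalar by evaluating both sides at $x=0$: since $p_m(0;\al,\be;q)=1$ for all $m$, and $S^{\al,\be}$ acts on polynomials as $\bigl(S^{\al,\be}r\bigr)(x)=\frac{1}{\al q}\frac{1-x}{1-\be q}r(x/q)-\frac{1-\be qx}{1-\be q}r(x)$, plugging in $x=0$ gives the constant $\frac{1}{\al q}\frac{1-\al q}{1-\be q}$ in one line. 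This sidesteps all the constant-chasing you anticipated.

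For orthogonality, the paper does not invoke the symmetric-operator-with-distinct-eigenvalues argument. Instead it iterates the adjoint relation of Lemma~\ref{lem:adjointDq-lqJacpol}: for $k\le n$, write $p_n$ via the raising formula as a constant times $S^{\al,\be}p_{n-1}(\cdot;\al q,\be q;q)$, pass to the adjoint to get $\tilde D_q x^k=(q^k-1)x^{k-1}$ against $p_{n-1}(\cdot;\al q,\be q;q)$ in the shifted space $\ell^2(\N;\al q,\be q;q)$, and repeat. After $k+1$ steps one hits $\tilde D_q 1=0$ when $k<n$. This ladder argument buys more than bare orthogonality: running it with $k=n$ yields the squared-norm formula \eqref{eq:squarednormlqJacopols}, which is exactly what is needed next for Theorem~\ref{thm:orthlittleqJac}. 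Your route is shorter for orthogonality alone but would leave the norm computation as a separate task.

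One caution if you pursue your route: Proposition~\ref{prop:Lsaonell2w} only gives symmetry of $L$ on finitely supported sequences, and polynomial sequences are not finitely supported. The extension of symmetry to $\cP$ is the boundary-term estimate carried out just after Remark~\ref{rmk:spectralLlittleqJacobi}, and it is there that the condition $0<c<1$ enters; you should cite that computation rather than Proposition~\ref{prop:Lsaonell2w} by itself.
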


\begin{proof}
Note that the second order difference operator has the little $q$-Jacobi polynomial
as eigenfunction, see Lemma \ref{lem:lqJac-poleigfunctions}. In the relabeling the 
operator $L$ is given 
by 
\[
\bigl(L^{\al,\be} f\bigr)(x) = \bigl(Lf\bigr)(x) = \al(1- \be q x) \frac{f(qx)-f(x)}{x} 
+ (1-x) \frac{f(x/q)-f(x)}{x}
\]
see \eqref{eq:lqJacpol-defL} with $(a,b,c,x)$ replaced by $(q^{-n}, \al\be q^{n+1}, \al q, qx)$.
Since the decomposition of $L^{\al,\be}$ of \eqref{eq:defSabc} corresponds, up to a scalar,  
with the operator $S^{\al,\be}$ we see that 
$\bigl( S^{\al,\be} p_{n-1}(\cdot;\al q,\be q;q)\bigr)(x)$  has to be 
a multiple of $p_n(x;\al,\be;q)$. By considering the evaluation at $0$, and 
using $p_n(0;\al,\be;q)=1$, we see that the multiple is 
\[
\frac{(1/\al q-1)}{1-\be q} = \frac{1}{\al q} \frac{1-\al q}{1-\be q}
\]
since we can write for a polynomial $r$
\[
\bigl( S^{\al,\be} r\bigr) (x) = \frac{1}{\al q} \frac{(1-x)}{(1-\be q)}r(x/q) - 
\frac{(1-\be q x)}{(1-\be q)}r(x).
\]

In order to show the orthogonality, we consider the following inner product for $k\leq n$, using
the raising and lowering operators $S^{\al,\be}$ and $\tilde{D}_q$;
\begin{gather*}
\langle x^k, p_n(\cdot;\al,\be;q) \rangle_{\ell^2(\N;\al,\be;q)} = \al q \frac{(1-\be q)}{(1-\al q)}
\langle x^k, S^{\al,\be} p_{n-1}(\cdot;\al q,\be q;q) \rangle_{\ell^2(\N;\al q,\be q;q)} \\
= (q^k-1) \al q \frac{(1-\be q)}{(1-\al q)} 
\langle x^{k-1}, p_{n-1}(\cdot;\al q,\be q;q) \rangle_{\ell^2(\N;\al q,\be q;q)} 
\end{gather*}
since $\tilde{D}_q x^k = (q^k-1) x^{k-1}$. In particular, we get $0$ for the inner product in 
case $k=0$. By iterating the procedure we get 
\begin{gather*}
\langle x^k, p_n(\cdot;\al,\be;q) \rangle_{\ell^2(\N;\al,\be;q)} = \al q \frac{(1-\be q)}{(1-\al q)}
\langle x^k, S^{\al,\be} p_{n-1}(\cdot;\al q,\be q;q) \rangle_{\ell^2(\N;\al q,\be q;q)} \\
= (-1)^p (q^{k-p+1};q)_p
\al^p q^{\frac12 p(p+1)} 
\frac{(\be q;q)_p}{(\al q;q)_p} 
\langle x^{k-p}, p_{n-p}(\cdot;\al q^p,\be q^p;q) \rangle_{\ell^2(\N;\al q^p,\be q^p;q)} 
\end{gather*}
and for $k<n$ this gives zero for $p=k+1\leq n$. Hence, the little $q$-Jacobi polynomial
$p_n(\cdot;\al,\be;q)$ of degree $n$ 
is orthogonal to all monomials of degree $<n$. So the  the little $q$-Jacobi polynomials are 
orthogonal. 
\end{proof}

Note that as an immediate corollary to the proof, we also obtain
\begin{equation}\label{eq:squarednormlqJacopols}
\begin{split}
&\langle p_n(\cdot;\al,\be;q), p_n(\cdot;\al,\be;q) \rangle_{\ell^2(\N;\al,\be;q)} =
\text{lc}\bigl(p_n(\cdot;\al,\be;q)\bigr)
\langle x^n , p_n(\cdot;\al,\be;q) \rangle_{\ell^2(\N;\al,\be;q)} \\
= &
\text{lc}\bigl(p_n(\cdot;\al,\be;q)\bigr) 
(-1)^n (q^;q)_n
\al^n q^{\frac12 n(n+1)} 
\frac{(\be q;q)_n}{(\al q;q)_n} 
\langle 1, 1 \rangle_{\ell^2(\N;\al q^n,\be q^n;q)},
\end{split}
\end{equation}
where $\text{lc}(p)$ denotes the leading coefficient of the polynomial $p$. 
So the shift operators can be used to find the squared norm of the orthogonal polynomials in terms
of the squared norm of the constant function $1$. 

The leading coefficient can be calculated directly from the definition \eqref{eq:deflqJacpols};
\begin{gather*}
\text{lc}\bigl(p_n(\cdot;\al,\be;q)\bigr) = 
\frac{ (q^{-n}, \al \be q^{n+1};q)_n}{(q,\al q;q)_n}q^n = 
(-1)^n q^{-\frac12 n(n-1)} \frac{ (\al \be q^{n+1};q)_n}{(\al q;q)_n}. 
\end{gather*}
The squared norm of the constant function $1$ follows from the $q$-binomial sum 
\eqref{eq:qbinomialthm}; 
\begin{gather*}
\langle 1, 1 \rangle_{\ell^2(\N;\al,\be;q)} = 
\sum_{k=0}^\infty  (\al q)^k \frac{(\be q;q)_k}{(q;q)_k} =
\rphis{1}{0}{\be q}{-}{q, \al q} = \frac{(\al \be q^2;q)_\infty}{(\al q;q)_\infty}.
\end{gather*}

This gives the following orthogonality relations for the little $q$-Jacobi polynomials from the 
analysis of the hypergeometric $q$-difference operator on a very specific set of points. 

\begin{thm}\label{thm:orthlittleqJac} Let $0<\al<q^{-1}$, $\be<q^{-1}$ and consider the 
little $q$-Jacobi polynomials 
\[
p_n(x;\al,\be;q) = \rphis{2}{1}{q^{-n},\al\be q^{n+1}}{\al q}{q, qx} 
\]
and the inner product space 
\[
\langle f, g \rangle_{\ell^2(\N;\al,\be;q)} = \sum_{k=0}^\infty f(q^k)\overline{g(q^k)} 
(\al q)^k \frac{(\be q;q)_k}{(q;q)_k}
\]
then the little $q$-Jacobi polynomials satisfy 
\begin{gather*}
\langle p_n(\cdot;\al,\be;q) , p_m(\cdot;\al,\be;q) \rangle_{\ell^2(\N;\al,\be;q)} = 
\de_{m,n} h_n(\al,\be;q) \\ 
h_n(\al,\be;q) = (\al q)^n \frac{(q, \be q;q)_n}{(\al q, \al\be q;q)_n} 
\frac{1-\al\be q}{1-\al\be q^{2n+1}}
\frac{(\al\be q^2;q)_\infty}{(\al q;q)_\infty}.
\end{gather*}
\end{thm}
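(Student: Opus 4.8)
The plan is to assemble results already established in this section. The orthogonality for $m\neq n$ is nothing new: it is exactly the content of Proposition~\ref{prop:adjointDq-lqJacpol2}. Concretely, since $0<q<1$ and $\al,\be$ are real the polynomials $p_n(\cdot;\al,\be;q)$ have real coefficients, so the inner product is symmetric and I may assume $m<n$. The proof of Proposition~\ref{prop:adjointDq-lqJacpol2} shows that $p_n(\cdot;\al,\be;q)$ is orthogonal to every monomial of degree strictly less than $n$; expanding the lower-degree polynomial $p_m(\cdot;\al,\be;q)$ in monomials then yields $\langle p_n, p_m\rangle_{\ell^2(\N;\al,\be;q)}=0$. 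Hence only the diagonal value $h_n$ remains to be computed.

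For the squared norm I would begin from \eqref{eq:squarednormlqJacopols}, which already reduces $h_n$ to the product of the leading coefficient $\text{lc}\bigl(p_n(\cdot;\al,\be;q)\bigr)$, an explicit elementary prefactor, and the quantity $\langle 1,1\rangle_{\ell^2(\N;\al q^n,\be q^n;q)}$. Into this I substitute the two pieces computed just above: the leading coefficient $(-1)^n q^{-\frac12 n(n-1)}(\al\be q^{n+1};q)_n/(\al q;q)_n$, and the value $\langle 1,1\rangle_{\ell^2(\N;\al q^n,\be q^n;q)}=(\al\be q^{2n+2};q)_\infty/(\al q^{n+1};q)_\infty$, the latter obtained by replacing $(\al,\be)$ with $(\al q^n,\be q^n)$ in the $q$-binomial evaluation $\langle 1,1\rangle_{\ell^2(\N;\al,\be;q)}=(\al\be q^2;q)_\infty/(\al q;q)_\infty$ coming from \eqref{eq:qbinomialthm}.

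From here the argument is bookkeeping. First I note that the two signs $(-1)^n$ cancel and the powers of $q$ combine as $q^{-\frac12 n(n-1)}q^{\frac12 n(n+1)}=q^n$, which together with the factor $\al^n$ produces the prefactor $(\al q)^n$ and the finite factor $(q,\be q;q)_n=(q;q)_n(\be q;q)_n$. To bring the two infinite products into the claimed shape I use the identity $(a;q)_\infty=(a;q)_n(aq^n;q)_\infty$ in the forms $(\al q^{n+1};q)_\infty=(\al q;q)_\infty/(\al q;q)_n$ and $(\al\be q^{2n+2};q)_\infty=(\al\be q^2;q)_\infty/(\al\be q^2;q)_{2n}$, so that the surviving infinite part is exactly $(\al\be q^2;q)_\infty/(\al q;q)_\infty$, and one factor $(\al q;q)_n$ cancels.

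The only step requiring genuine care is the collapse of the remaining finite $q$-shifted factorials, and I expect this to be the sole obstacle. Writing $\ga=\al\be$, after the cancellations above one is reduced to verifying
\[
\frac{(\ga q^{n+1};q)_n}{(\ga q^2;q)_{2n}} = \frac{1-\ga q}{(\ga q;q)_n\,(1-\ga q^{2n+1})}.
\]
I would check this by writing out the three products termwise: the numerator is $\prod_{j=n+1}^{2n}(1-\ga q^j)$ and the denominator $\prod_{j=2}^{2n+1}(1-\ga q^j)$; the overlapping block $\prod_{j=n+1}^{2n}$ cancels, leaving $1/\bigl(\prod_{j=2}^{n}(1-\ga q^j)\cdot(1-\ga q^{2n+1})\bigr)$, which is precisely the right-hand side once the factor $1-\ga q$ is restored. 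Combining this with the prefactors collected in the previous paragraph gives the stated formula for $h_n(\al,\be;q)$, completing the proof.
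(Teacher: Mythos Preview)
Your proof is correct and follows essentially the same route as the paper: orthogonality from Proposition~\ref{prop:adjointDq-lqJacpol2}, then the squared norm via \eqref{eq:squarednormlqJacopols} together with the explicit leading coefficient and the $q$-binomial evaluation of $\langle 1,1\rangle$. You simply supply more detail in the final simplification step (the identity $(\ga q^{n+1};q)_n/(\ga q^2;q)_{2n}=(1-\ga q)/\bigl((\ga q;q)_n(1-\ga q^{2n+1})\bigr)$), which the paper compresses into a single displayed line.
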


\begin{proof} This is a combination of the results in this section, and we are left with 
calculating the squared norm. Now 
\begin{gather*}
h_n(\al,\be;q) = \text{lc}\bigl(p_n(\cdot;\al,\be;q)\bigr) 
(-1)^n (q^;q)_n
\al^n q^{\frac12 n(n+1)} 
\frac{(\be q;q)_n}{(\al q;q)_n} 
\langle 1, 1 \rangle_{\ell^2(\N;\al q^n,\be q^n;q)} \\
= (-1)^n q^{-\frac12 n(n-1)} \frac{ (\al \be q^{n+1};q)_n}{(\al q;q)_n} 
(-1)^n (q^;q)_n
\al^n q^{\frac12 n(n+1)} 
\frac{(\be q;q)_n}{(\al q;q)_n} 
\frac{(\al \be q^{2n+2};q)_\infty}{(\al q^{n+1};q)_\infty} \\
=  \frac{(q, \be q;q)_n}{(\al q, \al\be q;q)_n} 
\frac{(\al q)^n}{1-\al\be q^{2n+1}}
\frac{(\al\be q;q)_\infty}{(\al q;q)_\infty} 
\end{gather*}
which is equal to the stated value. 
\end{proof}

\subsection{The three-term recurrence relation for the little $q$-Jacobi polynomials}

The shift operators $\tilde{D}_q$ and $S^{\al,\be}$ play an essential role in the 
derivation of the orthogonality relations for the little $q$-Jacobi polynomials in 
Theorem \ref{thm:orthlittleqJac}. As a final application we show how one can 
obtain the coefficients in the three-term recurrence relation for the 
monic little $q$-Jacobi polynomials. Let $\tilde{p}_n(x;\al,\be;q)$ be the monic
little $q$-Jacobi polynomials. Because of the monicity we have a three-term recurrence
of the form
\begin{equation*}
x\, \tilde{p}_n(x;\al,\be;q) = \tilde{p}_{n+1}(x;\al,\be;q) + b_n 
\tilde{p}_n(x;\al,\be;q) + c_n \tilde{p}_{n-1}(x;\al,\be;q).
\end{equation*}
The value of $c_n$ can then be calculated from the knowledge we already have obtained;
\begin{gather*}
c_n \langle \tilde{p}_{n-1}(x;\al,\be;q), \tilde{p}_{n-1}(x;\al,\be;q)\rangle_{\ell^2(\N;\al,\be;q)}
= \langle x\, \tilde{p}_n(x;\al,\be;q), \tilde{p}_{n-1}(x;\al,\be;q)\rangle_{\ell^2(\N;\al,\be;q)} \\
= \langle \tilde{p}_n(x;\al,\be;q), x \tilde{p}_{n-1}(x;\al,\be;q)\rangle_{\ell^2(\N;\al,\be;q)}
= \langle \tilde{p}_n(x;\al,\be;q), \tilde{p}_{n}(x;\al,\be;q)\rangle_{\ell^2(\N;\al,\be;q)}
\end{gather*}
using that multiplication by $x$ is self-adjoint. So we find
\begin{gather*}
 c_n = \frac{\langle \tilde{p}_n(x;\al,\be;q), \tilde{p}_{n}(x;\al,\be;q)\rangle_{\ell^2(\N;\al,\be;q)}}
 {\langle \tilde{p}_{n-1}(x;\al,\be;q), \tilde{p}_{n-1}(x;\al,\be;q)\rangle_{\ell^2(\N;\al,\be;q)}} 
 = \left( \frac{\text{lc}\bigl(p_{n-1}(\cdot;\al,\be;q)\bigr)}{\text{lc}\bigl(p_{n}(\cdot;\al,\be;q)\bigr)}
 \right)^2 \frac{h_n(\al,\be;q)}{h_{n-1}(\al,\be;q)}\\
 = \left(  -q^{n-1} \frac{(1-\al\be q^n)(1-\al q^n)}{(1-\al\be q^{2n-1})(1-\al\be q^{2n})}\right)^2
\al q \frac{(1-q^n)(1-\be q^n)(1-\al\be q^{2n-1})}{(1-\al q^n)(1-\al\be q^n)(1-\al\be q^{2n+1})} \\
= \al q^{2n-1} \frac{(1-q^n)(1-\al q^n)(1-\be q^n)(1-\al\be q^n)}{(1-\al\be q^{2n-1})
(1-\al\be q^{2n})^2(1-\al\be q^{2n+1})}.
 \end{gather*}
Writing $\tilde{p}_n(x;\al,\be;q) = x^n + r_n(\al,\be)x^{n-1} + \text{l.o.t.}$, it follows that 
upon comparing coefficients of $x^{n}$ in the three-term recurrence relation that 
\begin{equation*}
r_n(\al,\be) = r_{n+1}(\al,\be) + b_n \qquad \Longrightarrow \qquad b_n = 
r_n(\al,\be) - r_{n+1}(\al,\be).
\end{equation*}
We could read it off from the explicit expression \eqref{eq:deflqJacpols}, but we use the 
shift operators to find the values. Indeed, since, by Proposition \ref{prop:adjointDq-lqJacpol2},
\begin{gather*}
\bigl( \tilde{D}_q \tilde{p}_n(\cdot;\al,\be;q)\bigr)(x) 
= (q^n-1) \, \tilde{p}_{n-1}(x;\al q,\be q;q) \quad \Longrightarrow \\
(q^{n-1}-1) r_n(\al,\be) = (q^n-1) r_{n-1}(\al q,\be q)  
\quad \Longrightarrow \quad  r_n(\al,\be) = \frac{(1-q^n)}{(1-q^{n-1})} r_{n-1}(\al q,\be q)  
\quad \Longrightarrow \\
r_n(\al,\be) = \frac{(1-q^n)}{(1-q^{n-p})} r_{n-p}(\al q^p,\be q^p) =
\frac{(1-q^n)}{(1-q)} r_{1}(\al q^{n-1},\be q^{n-1}).
\end{gather*}
In order to determine $r_1(\al,\be)$ we use the shift operator as well. By Proposition 
\ref{prop:adjointDq-lqJacpol2} we have that $p_1(x;\al,\be;q)$ is a multiple of 
\begin{gather*}
\bigr( S^{\al,\be}1\bigr)(x) = \frac{1}{\al q} \frac{(1-x)}{(1-\be q)} - 
\frac{(1-\be q x)}{(1-\be q)} = \frac{\be q -1/\al q}{(1-\be q)} x  
+ \frac{1/\al q -1}{(1-\be q)} = 
\frac{\be q -1/\al q}{(1-\be q)} \left( x + \frac{1/\al q -1}{\be q -1/\al q}\right) \\
\Longrightarrow \qquad r_1(\al,\be) = \frac{1/\al q -1}{\be q -1/\al q}
= - \frac{1-\al q}{1-\al\be q^2}
\end{gather*}

\begin{prop}\label{prop:3trmoniclqJacpol}
The monic little $q$-Jacobi polynomials $\tilde{p}_n(x;\al,\be;q)$ satisfy the 
three-term recurrence relation 
\begin{gather*}
x\, \tilde{p}_n(x;\al,\be;q) = \tilde{p}_{n+1}(x;\al,\be;q) + b_n 
\tilde{p}_n(x;\al,\be;q) + c_n \tilde{p}_{n-1}(x;\al,\be;q), \\
b_n = \frac{q^n\left( (1+\al) -\al(1+\be)(1+q)q^n + \al\be q(1+\al)q^{2n}\right)}
{(1-\al\be q^{2n})(1-\al\be q^{2n+2})}
\\
c_n= \al q^{2n-1} \frac{(1-q^n)(1-\al q^n)(1-\be q^n)(1-\al\be q^n)}{(1-\al\be q^{2n-1})
(1-\al\be q^{2n})^2(1-\al\be q^{2n+1})}.
\end{gather*}
\end{prop}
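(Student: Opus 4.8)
The plan is to assemble the two coefficient computations already carried out in the discussion preceding the statement; essentially nothing new is needed beyond one final algebraic simplification, so the proof is mostly a matter of collecting results in hand.

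For $c_n$ there is nothing left to do. The displayed computation just above the statement produces
\[
c_n = \al q^{2n-1} \frac{(1-q^n)(1-\al q^n)(1-\be q^n)(1-\al\be q^n)}{(1-\al\be q^{2n-1})(1-\al\be q^{2n})^2(1-\al\be q^{2n+1})},
\]
obtained from $c_n = \bigl(\lc(p_{n-1})/\lc(p_n)\bigr)^2\, h_n/h_{n-1}$, where $\lc$ denotes the leading coefficient and $h_n$ is the squared norm from Theorem~\ref{thm:orthlittleqJac}. This is already the claimed expression, so I would simply quote it.

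For $b_n$ I would use the relation $b_n = r_n(\al,\be) - r_{n+1}(\al,\be)$ obtained by comparing coefficients of $x^n$ in the monic three-term recurrence, where $r_n(\al,\be)$ is the subleading coefficient of $\tilde p_n$. First I would combine the two facts $r_n(\al,\be) = \frac{1-q^n}{1-q}\, r_1(\al q^{n-1},\be q^{n-1})$ and $r_1(\al,\be) = -\frac{1-\al q}{1-\al\be q^2}$, established via the action of $\tilde D_q$ and $S^{\al,\be}$, into the closed form
\[
r_n(\al,\be) = -\frac{(1-q^n)(1-\al q^n)}{(1-q)(1-\al\be q^{2n})},
\]
using that $r_1(\al q^{n-1},\be q^{n-1}) = -(1-\al q^n)/(1-\al\be q^{2n})$. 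Substituting into $b_n = r_n - r_{n+1}$ then gives
\[
b_n = -\frac{1}{1-q}\left(\frac{(1-q^n)(1-\al q^n)}{1-\al\be q^{2n}} - \frac{(1-q^{n+1})(1-\al q^{n+1})}{1-\al\be q^{2n+2}}\right).
\]

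The final and only nontrivial step is to bring this difference over the common denominator $(1-\al\be q^{2n})(1-\al\be q^{2n+2})$ and simplify the numerator; this is the main obstacle, although it is entirely routine. The spurious factor $(1-q)$ must cancel, and I would verify this by writing each $(1-q^n)(1-\al q^n)$ as $1-(1+\al)q^n+\al q^{2n}$, grouping the numerator into a difference manifestly divisible by $1-q$, and checking that after cancellation the surviving terms assemble into $q^n\bigl((1+\al)-\al(1+\be)(1+q)q^n+\al\be q(1+\al)q^{2n}\bigr)$, where the middle coefficient matches after using $\al(1+\be)(1+q)=\al(1+q)+\al\be(1+q)$. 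All of the conceptual content sits in the shift-operator computations already established; what remains is bookkeeping.
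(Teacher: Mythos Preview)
Your proposal is correct and follows essentially the same approach as the paper: both quote the already-established value of $c_n$, then compute $b_n=r_n(\al,\be)-r_{n+1}(\al,\be)$ by inserting the closed form $r_n(\al,\be)=-\frac{(1-q^n)(1-\al q^n)}{(1-q)(1-\al\be q^{2n})}$ obtained from the shift-operator identities, and finish with the routine algebraic simplification over the common denominator. The paper's proof is in fact slightly terser than yours, simply writing ``by working this out we get'' for the final step.
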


The value for $b_n$ is seemingly different from the classical value given in e.g. 
\cite[(3.12.4)]{KoekS}. We leave this to Exercise \ref{ex:bnrightvalue}.

\begin{proof} We have already established the value for $c_n$. It remains to finish the 
calculation of $b_n$. This is done as follows;
\begin{gather*}
b_n = r_n(\al,\be) - r_{n+1}(\al,\be) = 
\frac{(1-q^n)}{(1-q)} r_{1}(\al q^{n-1},\be q^{n-1}) - 
\frac{(1-q^{n+1})}{(1-q)} r_{1}(\al q^{n},\be q^{n}) \\
= - \frac{(1-q^n)(1-\al q^n)}{(1-q)(1-\al\be q^{2n})} + 
\frac{(1-q^{n+1})(1-\al q^{n+1})}{(1-q)(1-\al\be q^{2n+2})}
\end{gather*}
By working this out we get 
\begin{gather*}
b_n 
= \frac{q^n}{(1-\al\be q^{2n})(1-\al\be q^{2n+2})}
\left( (1+\al) -\al(1+\be)(1+q)q^n + \al\be q(1+\al)q^{2n}\right)
\end{gather*}
\end{proof}

Note that the value 
\[
r_n(\al,\be) = - \frac{(1-q^n)(1-\al q^n)}{(1-q)(1-\al\be q^{2n})}
\]
also corresponds with \eqref{eq:deflqJacpols} taking into account division
by the leading coefficient. 


\begin{remark} 
Let us view the operator $L=L^{\al,\be}$ as an operator acting on polynomials as
well as the operator $M$ which is acting by multiplication. They can be viewed 
as generators (up to an affine scaling) of a limit of the Zhedanov algebra, also 
known as the Askey-Wilson algebra. 
We refer to \cite{KoorM} for the precise formulation and related references. 
Moreover, the Zhedanov algebra as well as its degenerations in \cite{KoorM} 
have relations that can be interpreted as non-homogeneous Serre relations 
in quantum algebras, and this type of relations hold for generators of 
quantum symmetric pairs as studied by Gail Letzter and coworkers, 
see e.g. \cite[\S 5.3]{Kolb}. It is not clear what the connection entails. 
\end{remark}

\subsection{Relation to Al-Salam--Chihara polynomials}\label{ssec:AlSalamChihara-littleqJacobi}
We have circumvented the precise analytic study of the basic $q$-difference equation for 
the little $q$-Jacobi polynomials or the basic hypergeometric series. 
The reason is that this analytic study is somewhat complicated since the 
self-adjoint extension of the symmetric operator 
as in Lemma \ref{lem:adjointDq-lqJacpol} depend in general on parameters.
Indeed, $(L, D((L))$ as in Lemma \ref{lem:adjointDq-lqJacpol} is not 
essentially self-adjoint in general. We explain this in this section by relating
to a non-determinate moment problem. 

We can also relate the eigenvalue equation to orthogonal polynomials.
Indeed, rewriting $Lu=\la u$ gives 
\begin{gather*}
\la u_k(\la) = (cq^{-k-1}-ab)\, (u_{k+1}(\la)-u_k(\la))   +\, 
(q^{-k}-1)\, (u_{k-1}(\la)-u_k(\la))
\end{gather*}
which we can consider as a three-term recurrence for orthogonal polynomials 
with initial values $u_0(\la)=0$ (and $u_{-1}(\la)=0$).
In order to determine these polynomials, we first look at the monic version.
So we put $u_k(\la) = \al_k r_k(\la)$ with 
\[
 \frac{\al_{k+1}}{\al_k} (cq^{-k-1}-ab) = 1
\]
so that the recurrence relation becomes
\begin{gather*}
\la r_k(\la) = \, r_{k+1}(\la)- \bigl((cq^{-k-1}-ab)+ (q^{-k}-1)\bigr) r_k(\la)  +\, 
(q^{-k}-1)(cq^{-k}-ab)\, r_{k-1}(\la)
\end{gather*}
and putting $2\mu=\al(\la-ab-1)$, $p_k(\mu) = \al^{-k} r_k(\al(2\mu-ab-1))$ we get
\begin{gather*}
2\mu p_k(\mu) = p_{k+1}(\mu)- \frac{1}{\al} (cq^{-k-1}+ q^{-k}) p_k(\mu)  +\, 
\frac{ab}{\al^2}(1-q^{-k})(1-cq^{-k}/ab)\, p_{k-1}(\mu)
\end{gather*}
and finally taking $\al = - \sqrt{ab}$ we find 
\begin{equation}\label{eq:relAlSalChih}
2\mu p_k(\mu) = p_{k+1}(\mu) + q^{-k}(c/q\sqrt{ab}+ 1/\sqrt{ab}) p_k(\mu)  +\, 
(1-q^{-k})(1-cq^{-k}/ab)\, p_{k-1}(\mu).
\end{equation}
Now \eqref{eq:relAlSalChih} can be matched to \cite[\S 3.8]{KoekS}, so that $p_k(\mu)$ can be identified with the 
Al-Salam--Chihara polynomials $Q_k(\mu; c/q\sqrt{ab}, 1/\sqrt{ab}|q^{-1})$ 
in base $q^{-1}>1$. 

Theorem \ref{thm:AlSalChih-indeterminate} gives a characterization of the (in-)determinacy of 
the Al-Salam--Chihara polynomials in case the base is bigger than $1$, and it is 
due to Askey and Ismail \cite[Thm.~3.2, p.~36]{AskeI}. 

\begin{thm}\label{thm:AlSalChih-indeterminate} Consider the sequence of monic polynomials
\[
xv_n(x) = v_{n+1}(x) + A q^{-n} + (1-q^{-n}) (C-Bq^{-n}) v_{n-1}(x)
\]
with $0<q<1$, $B\geq 0$, $B> C$ and initial conditions $v_{-1}(x)=0$, $v_0(x)=1$.  
Then the corresponding moment problem is indeterminate if and only if 
\[
A^2>4B, \quad \text{and} \quad q\geq |\be^2 B|
\]
where $(1-At+Bt^2) = (1-t/\al)(1-t/\be)$ with $|\al|\geq |\be|$.  
\end{thm}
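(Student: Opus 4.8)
The plan is to regard the three-term recurrence as defining a family of orthonormal polynomials and to settle indeterminacy through the classical criterion: the Hamburger moment problem is indeterminate if and only if $\sum_{n\ge 0}|p_n(z_0)|^2<\infty$ for one (equivalently every) $z_0\in\C\setminus\R$, where $p_n$ are the orthonormal polynomials, and in the indeterminate case \emph{every} solution of the recurrence is square-summable. First I would pass from the monic $v_n$ to the orthonormal $p_n=v_n/\sqrt{\lambda_1\cdots\lambda_n}$, with $\lambda_n=(1-q^{-n})(C-Bq^{-n})$; the hypotheses $0<q<1$, $B\ge 0$, $B>C$ guarantee $\lambda_n>0$ for $n\ge 1$, so these are genuine orthogonal polynomials on the line (the $q^{-1}$-Al-Salam--Chihara polynomials of \eqref{eq:relAlSalChih}). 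The symmetrised recurrence is $xp_n=a_{n+1}p_{n+1}+Aq^{-n}p_n+a_np_{n-1}$ with $a_n=\sqrt{\lambda_n}\sim\sqrt{B}\,q^{-n}$.

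The decisive step is that, although the coefficients blow up like $q^{-n}$ and $q^{-2n}$, multiplying the recurrence through by $q^n$ yields one with \emph{constant} limiting coefficients: since $a_{n+1}q^n\to\sqrt{B}\,q^{-1}$, $Aq^{-n}\cdot q^n=A$, $a_nq^n\to\sqrt{B}$, and $xq^n\to 0$, the rescaled equation is $\sqrt{B}\,q^{-1}P_{n+1}+AP_n+\sqrt{B}\,P_{n-1}=0$ up to perturbations of order $q^n$, which are summable. Poincar\'e--Perron theory (in the form valid for summable perturbations) then supplies solutions $p_n^{\pm}\sim\sigma_\pm^{\,n}$, where $\sigma_\pm$ are the roots of the symbol $\sqrt{B}\,q^{-1}\sigma^2+A\sigma+\sqrt{B}=0$, so $\sigma_+\sigma_-=q$. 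Square-summability of \emph{all} solutions, hence indeterminacy, is then equivalent to $\max(|\sigma_+|,|\sigma_-|)<1$; conversely, if the dominant root has modulus $>1$, a generic $p_n(z_0)$ inherits that growth and $\sum|p_n(z_0)|^2=\infty$, giving determinacy.

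What remains is to convert $\max(|\sigma_+|,|\sigma_-|)<1$ into the stated data. Using $A=1/\alpha+1/\beta$ and $B=1/(\alpha\beta)$ from the factorisation $1-At+Bt^2=(1-t/\alpha)(1-t/\beta)$, one has $A/\sqrt{B}=\sqrt{\alpha/\beta}+\sqrt{\beta/\alpha}$, so the symbol factors and its roots become explicit monomials in $q$ and the ratio $\beta/\alpha$. The two roots have genuinely different modulus exactly when they are real, i.e. when the symbol discriminant $A^2-4Bq^{-1}$ is positive, which is tied to the regime where $\alpha,\beta$ are real and distinct ($A^2>4B$); there the condition $\max|\sigma_\pm|<1$ reduces, after elementary bookkeeping, to a comparison between $q$ and $|\beta/\alpha|$, and one notes the identity $|\beta^2B|=|\beta/\alpha|$ recorded in the theorem. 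The determinacy (converse) direction is handled symmetrically: when the conditions fail I would exhibit a solution of modulus $\ge 1$ and verify, through a connection-coefficient computation, that $p_n(z_0)$ is not purely the subdominant solution.

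The hard part will be the borderline locus where the dominant symbol root sits on the unit circle (together with the coincident case of a repeated root): there Poincar\'e--Perron says nothing about the subexponential factor that actually decides $\ell^2$-membership, so the qualitative reduction above must be upgraded to precise asymptotics. This is exactly where one needs the exact $q$-hypergeometric solutions and their theta-function connection coefficients---the machinery of Proposition \ref{prop:solofBqDEat0atinfty}, Remark \ref{rmk:linindepsolBHDE}, and the connection formula \eqref{eq:4.3.2}---rather than soft estimates, and it is the input one borrows from Askey and Ismail \cite{AskeI}. Pinning down on which side of the dichotomy the boundary falls (so that the inequality appears in non-strict form) and confirming that the relevant connection coefficient is non-vanishing at a generic $z_0$ are the delicate points; by contrast, the reduction to the symbol and the $\ell^2$-dichotomy are routine.
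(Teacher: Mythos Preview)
Your route is genuinely different from the one the paper invokes. The paper does not prove this theorem; it records that Askey and Ismail establish it by applying Darboux's method to an explicit generating function for the $q^{-1}$-Al-Salam--Chihara polynomials, which yields the large-$n$ asymptotics of $p_n(x)$ directly in terms of the singularities $t=\al$ and $t=\be$ of that generating function, and then checks the $\ell^2$-criterion. You instead rescale the three-term recurrence and read off the asymptotics from the characteristic roots $\si_\pm$ of the limiting constant-coefficient equation, with $\si_+\si_-=q$. The reduction to that symbol is correct, and the strategy is reasonable.

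The gap is in the ``elementary bookkeeping'' step. Your symbol is $\si^2+(Aq/\sqrt{B})\si+q=0$, whose discriminant vanishes at $A^2=4B/q$, not at $A^2=4B$; the roots are \emph{not} monomials in $q$ and $\be/\al$ as you claim, and the condition $\max(|\si_+|,|\si_-|)<1$ does not reduce to the pair $A^2>4B$, $q\ge|\be^2B|$. Concretely, take real $\al>\be>0$ with $q<\be/\al<1$: then $A^2>4B$ but the theorem's second inequality fails (so the theorem predicts \emph{determinate}), yet one checks $A^2<4B/q$, so your symbol has complex conjugate roots of modulus $\sqrt{q}<1$ and your Levinson-type argument would force every solution into $\ell^2$ and hence \emph{indeterminate}. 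Likewise the complex-root regime $A^2<4B/q$ is an open set, not merely the ``borderline locus'' you single out, so Poincar\'e--Perron in its classical form is unavailable on a large parameter region, not just at an edge. This is exactly where the generating-function asymptotics earn their keep: Darboux's method produces the leading behaviour in terms of $\al,\be$ directly, without passing through the symbol of the recurrence, and that is what makes the stated dichotomy line up with $A^2>4B$ and $q\ge|\be^2B|$ rather than with $\max|\si_\pm|<1$.
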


The conditions $0<q<1$, $B\geq 0$, $B> C$ in Theorem \ref{thm:AlSalChih-indeterminate}
ensure that the conditions of Favard's theorem, see e.g. \cite{Chih}, 
\cite{Deif}, \cite{DunfS}, 
\cite{Isma}, \cite{Koel-Laredo}, are met. 
So there is an orthogonality measure for which the polynomials $v_n(x)$ are orthogonal. 
In the determinate case this measure is uniquely determined by the polynomials, whereas
in the indeterminate case there are infinitely many orthogonality measures for these
polynomials. In the indeterminate case this means that the operator $L$ with domain
$D(L)$ the finite linear combinations as in Proposition \ref{prop:Lsaonell2w} is 
not essentially self-adjoint, see e.g. \cite{DunfS}, \cite{Koel-Laredo}, \cite{Simo}. 

The proof of Theorem \ref{thm:AlSalChih-indeterminate} follows by observing that 
a moment problem is indeterminate if and only if $\sum_{n=0}^\infty |p_n(i)|^2<\infty$ 
for the corresponding orthonormal polynomials $p_n(x)$, see e.g. \cite{Akhi}. 
Askey and Ismail then determine the asymptotic behaviour of the Al-Salam--Chihara 
polynomials by applying Darboux's method, see e.g. \cite{Olve}, to the 
generating function for the Al-Salam--Chihara 
polynomials.

Comparing Theorem \ref{thm:AlSalChih-indeterminate} with \eqref{eq:relAlSalChih} we see that 
we can apply Theorem \ref{thm:AlSalChih-indeterminate} with $(A,B,C)=(c/q\sqrt{ab} + 1/\sqrt{ab}, c/abq,1)$ 
and the same base $q$.
So the requirement for Favard's theorem translates to $c/abq>1$ or $c>abq$, which we 
now assume. 
Then  the first condition $A^2>4B$ translates to 
\[
\left( \frac{c}{q\sqrt{ab}} + \frac{1}{\sqrt{ab}}\right)^2 > \frac{4c}{abq} 
\ \Longleftrightarrow \ \left( \frac{c}{q\sqrt{ab}} - \frac{1}{\sqrt{ab}}\right)^2 >0,
\]
which is always true unless $c=q$. For the second condition we factorise
\[
1-At+Bt^2= 1- (c/q\sqrt{ab} + 1/\sqrt{ab})t + \frac{c}{abq}t^2 = 
(1-\frac{ct}{q\sqrt{ab}})(1-\frac{t}{\sqrt{ab}})
\]
so that $\{\al,\be\} =\{\frac{q\sqrt{ab}}{c}, \sqrt{ab}\}$. So 
$\be= \sqrt{ab}$ if $c\leq q$, and then $q\geq |\be^2B|$ is equivalent to $c\leq q^2$.
Next $\be=\frac{q\sqrt{ab}}{c}$ if $c\geq q$ and then 
$q\geq |\be^2B|$ is equivalent to $q\geq 1$. 
We conclude that $L$ is not essentially self-adjoint if $0<c<q$.


\subsection{Exercises}
\begin{enumerate}[1.]
\item Show that $\tilde{D}_q$ as in Lemma \ref{lem:adjointDq-lqJacpol} is unbounded.
\item\label{ex:bnrightvalue} Look up the standard value for $b_n$ as in Proposition \ref{prop:3trmoniclqJacpol}
and establish the equality with the value as given in Proposition \ref{prop:3trmoniclqJacpol}. 
\item Show that the three-term recurrence relation for the litle $q$-Jacobi polynomials as
in Proposition \ref{prop:3trmoniclqJacpol} gives a relation for the Al-Salam--Chihara polynomials
which is related to the $q$-difference operator for the  Al-Salam--Chihara polynomials. 
\end{enumerate}

\subsection*{Notes} The little $q$-Jacobi polynomials were introduced by 
Andrews and Askey \cite{AndrA} in 1977. The link to the quantum $SU(2)$-group
as matrix elements of unitary representations by Vaksman \& Soibelman,
Koornwinder and  Masuda, Mimachi, Nakagami, Noumi, and Ueno at the end of the 
1980s has led to many results on (subclasses of) little $q$-Jacobi polynomials, see 
the references in the lecture notes \cite{Koor-CQGSF} by Koornwinder.
The usage of the shift operators to obtain the explicit results is 
a technique that can be useful in other applications, such as multivariable
setting or in the matrix-valued case, see also \cite{Koor-CQGSF} for this 
approach for little and big $q$-Jacobi polynomials. 
The duality between little $q$-Jacobi polynomials and Al-Salam--Chihara polynomials
is observed by Rosengren \cite{Rose} and it is also observed by Groenevelt 
\cite{Groe-Ramanujan}. 
This duality --but in a dual way-- also plays an important role in the 
study of the quantum analogue of the Laplace-Beltrami operator on 
bounded quantum symmetric domain, see Vaksman \cite{Vaks}. 
For the corresponding Zhedanov algebra, the duality is described in 
\cite{KoorM}. 
The duality can also be extended to big $q$-Jacobi polynomials and 
continuous dual $q^{-1}$-Hahn polynomials, see \cite{KoelS-CA}. 
In general, this duality is related to explicit solutions of explicit indeterminate
moment problems, and several examples are known. 
A vector-valued analogue of \cite{KoelS-CA} is given by Groenevelt \cite{Groe-CA2009}.


\section{Basic hypergeometric $q$-difference equation: non-polynomial case}\label{sec:BHS-qdiff-nonpol}

We now consider the basic hypergeometric $q$-difference equation in 
a more general version. In the general version we cannot restrict naturally
to a simple domain. We have to take all the  general $q$-line $zq^\Z$ into
account. 

\subsection{Doubly infinite Jacobi operators}\label{ssec:doublyinfiniteJacobioperators}
In this section we briefly review the spectral analysis of 
a doubly infinite Jacobi operator, i.e. a three-term recurrence
on the Hilbert space $\ell^2(\Z)$. 
This section requires some knowledge from functional analysis, 
in particular of symmetric, unbounded, and self-adjoint operators and 
the spectral theorem.

We consider an operator on
the Hilbert space $\ell^2(\Z)$ of the form
\begin{equation}\label{eq:defLnonpolcase}
L\, e_k = a_k \, e_{k+1} + b_k\, e_k +
a_{k-1}\, e_{k-1}, \qquad
a_k> 0, \ b_k\in\R,
\end{equation}
where $\{ e_k\}_{k\in\Z}$ is the standard orthonormal basis
of $\ell^2(\Z)$. If $a_i=0$ for some $i\in\Z$, then $L$
splits as the direct sum of two Jacobi operators, 
so that we are essentially back to two three-term recurrence 
operators related to two sets of orthogonal 
polynomials. Recall that a three-term recurrence operator on 
$\ell^2(\N)$ is a Jacobi operator.\index{Jacobi operator}
The spectral analysis is closely related to the orthogonality of 
the corresponding orthogonal polynomials, and is essentially a proof of 
Favard's theorem, see \cite{DunfS}, \cite{Koel-Laredo}, \cite{Simo}. 
So we will assume that $a_i\not=0$ for all $i\in \Z$. 
We call $L$ a Jacobi operator\index{Jacobi operator} on $\ell^2(\Z)$ or a
doubly infinite Jacobi operator.\index{doubly infinite Jacobi operator}

The domain $\cD(L)$
of $L$ is the dense subspace $\cD(\Z)$ of finite linear
combinations of the basis elements $e_k$, $k\in \Z$.  
This makes $L$ a densely defined symmetric operator.

We extend the action of $L$ to an arbitrary vector
$v=\sum_{k=-\infty}^\infty v_ke_k\in\ell^2(\Z)$  by
$$
L^\ast \, v = \sum_{k=-\infty}^\infty (a_k \, v_{k+1}
+ b_k\, v_k + a_{k-1}\, v_{k-1})\, e_k,
$$
which is not an element of $\ell^2(\Z)$ in general.
Define
$$
{\cD}^\ast = \{ v\in\ell^2(\Z)\mid L^\ast v\in \ell^2(\Z)\}.
$$

\begin{lemma}\label{lem:maxdomdiJacobi} 
$(L^\ast, \cD^\ast)$ is the adjoint of $(L,\cD(\Z))$.
\end{lemma}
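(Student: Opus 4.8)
The plan is to verify directly that $(L^\ast, \cD^\ast)$ satisfies the defining property of the adjoint of the densely defined operator $(L, \cD(\Z))$. Recall that the adjoint $(L^\dagger, \cD(L^\dagger))$ is characterized as follows: a vector $v \in \ell^2(\Z)$ lies in $\cD(L^\dagger)$ if and only if the linear functional $u \mapsto \langle Lu, v\rangle$, defined on $\cD(\Z)$, is bounded, in which case $L^\dagger v$ is the unique vector satisfying $\langle Lu, v\rangle = \langle u, L^\dagger v\rangle$ for all $u \in \cD(\Z)$. So I must show two things: first, that $\cD^\ast = \cD(L^\dagger)$ as sets, and second, that $L^\ast v = L^\dagger v$ on this common domain.

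**The easy inclusion and the formula.**
First I would take $v \in \cD^\ast$, so $L^\ast v \in \ell^2(\Z)$ by definition. For any finitely supported $u = \sum_k u_k e_k \in \cD(\Z)$, I compute $\langle Lu, v\rangle$ explicitly. Since $u$ has finite support, the sum $Lu = \sum_k u_k(a_k e_{k+1} + b_k e_k + a_{k-1} e_{k-1})$ is a finite combination of basis vectors, so pairing against $v$ and reindexing (using that $a_k, b_k$ are real) gives
\[
\langle Lu, v\rangle = \sum_k u_k \overline{(a_k v_{k+1} + b_k v_k + a_{k-1} v_{k-1})} = \langle u, L^\ast v\rangle.
\]
The reindexing is a finite manipulation because only finitely many $u_k$ are nonzero, so no convergence issue arises here. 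This shows $v \in \cD(L^\dagger)$ with $L^\dagger v = L^\ast v$, giving the inclusion $\cD^\ast \subseteq \cD(L^\dagger)$ together with agreement of the two operators on $\cD^\ast$.

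**The reverse inclusion.**
For the converse, I would take $v \in \cD(L^\dagger)$ and show $v \in \cD^\ast$, i.e. that $L^\ast v \in \ell^2(\Z)$. By the defining property there is a vector $w = L^\dagger v \in \ell^2(\Z)$ with $\langle Lu, v\rangle = \langle u, w\rangle$ for all $u \in \cD(\Z)$. Testing against $u = e_k$ for each fixed $k \in \Z$ and computing $\langle Le_k, v\rangle = \overline{a_k v_{k+1} + b_k v_k + a_{k-1} v_{k-1}}$ (again purely finite, since $Le_k$ has three terms), I find that the $k$-th coordinate of $w$ equals exactly the $k$-th coordinate of the formal expression $L^\ast v$. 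Hence $L^\ast v = w \in \ell^2(\Z)$, so $v \in \cD^\ast$ by definition, and moreover $L^\ast v = L^\dagger v$. This establishes $\cD(L^\dagger) \subseteq \cD^\ast$ and completes the proof.

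**Main obstacle.**
The conceptual content is slight, and the main thing to be careful about is that all the index-shifting identities only involve \emph{finite} sums — I must pair the operator against finitely supported $u$ (in particular against single basis vectors $e_k$) so that no interchange of infinite summation is needed, and only afterward pass to the characterization of the adjoint. The reality of the coefficients $a_k, b_k$ is used to identify the formal transpose with $L^\ast$ itself. The one genuine point is recognizing that testing against all $e_k$ simultaneously recovers every coordinate of $L^\ast v$, which is what forces $L^\ast v$ to coincide with the $\ell^2$-vector $L^\dagger v$ and hence to lie in $\ell^2(\Z)$; this is the step that upgrades the weak/coordinatewise identity to the operator identity on the maximal domain.
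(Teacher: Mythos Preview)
Your argument is correct and is the standard one: you verify the two inclusions $\cD^\ast \subseteq \cD(L^\dagger)$ and $\cD(L^\dagger) \subseteq \cD^\ast$ by direct computation, testing against basis vectors $e_k$ for the nontrivial direction, and you correctly observe that all manipulations involve only finite sums because $u \in \cD(\Z)$. The paper itself does not give a proof of this lemma; it explicitly leaves it as an exercise with a pointer to \cite{Koel-Laredo} for the solution, so there is nothing further to compare against here.
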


The proof of Lemma \ref{lem:maxdomdiJacobi} requires a bit of Hilbert
space theory, and we leave it to the Exercise \ref{ex:lem:maxdomdiJacobi}. 

In particular, $L^\ast$ commutes with complex conjugation,
so its deficiency indices are equal. Here the deficiency indices $(n_+,n_-)$ are
defined as\index{deficiency index}
\begin{gather*}
n_+= \dim \ker (L^\ast - z) = \dim \ker (L^\ast - i), \qquad \Im z>0 \\
n_-= \dim \ker (L^\ast - z) = \dim \ker (L^\ast + i), \qquad \Im z<0
\end{gather*}
since the dimension is constant in the upper and lower half plane. 
The solution
space of $L^\ast v=z\, v$ is two-dimensional, since $v$
is completely determined by any initial data $(v_{n-1},v_n)$
for any fixed $n\in\Z$. So the deficiency indices are
equal to $(i,i)$ with $i\in \{ 0,1,2\}$.
From the general theory of self-adjoint operators, see \cite{DunfS}, 
we have that $(L, \cD(L))$ has self-adjoint extensions since the 
deficiency indices $n_-=n_+$. In case $n_-=n_+=0$, the operator 
$(L^\ast, \cD^\ast)$ is self-adjoint, and this case will be generally assumed
in this section. 

\subsubsection{Relation to Jacobi operators}

To the operator $L$ we associate two Jacobi operators
$J^+$ and $J^-$ acting on $\ell^2(\N)$ with orthonormal basis
denoted by $\{f_k\}_{k\in\N}$ in order to avoid
confusion. Define
\begin{equation*}
\begin{split}
J^+\, f_k &= \begin{cases}
a_k \, f_{k+1} + b_k\, f_k + a_{k-1}\, f_{k-1},&
\text{for $k\geq 1$,} \\
a_0 \, f_1 + b_0\, f_0, & \text{for $k=0$,}\end{cases} \\
J^-\, f_k &= \begin{cases}
a_{-k-2}\, f_{k+1} + b_{-k-1}\, f_k + a_{-k-1}\, f_{k-1},&
\text{for $k\geq 1$,} \\
a_{-2} \, f_1 + b_{-1}\, f_0, & \text{for $k=0$,}\end{cases}
\end{split}
\end{equation*}
and extend by linearity to $\cD(\N)$, the space
of finite linear combinations of the basis vectors
$\{f_k\}_{k=0}^\infty$ of $\ell^2(\N)$. Then $J^{\pm}$ are densely
defined symmetric operators with deficiency indices $(0,0)$
or $(1,1)$ corresponding to whether the associated Hamburger
moment problems is determinate or indeterminate, see \cite{Akhi}, 
\cite{BuchC}, 
\cite{DunfS}, \cite{Koel-Laredo}, \cite{Simo}.
The following theorem, due to Masson and Repka \cite{MassR},
relates the deficiency indices of $L$ and $J^\pm$.

\begin{thm}[Masson and Repka]\label{thm:MassonRepka}
The deficiency indices of
$L$ are obtained by summing the deficiency indices of $J^+$
and the deficiency indices of $J^-$. 
\end{thm}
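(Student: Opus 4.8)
The plan is to identify the deficiency space $\ker(L^\ast-i)$ with the space of square-integrable solutions of the underlying three-term recurrence, and then to read off the count from the Weyl limit point/limit circle classification of $J^+$ and $J^-$. Since $L^\ast$ commutes with complex conjugation we have $n_+=n_-$, so it suffices to compute $n_+(L)=\dim\ker(L^\ast-i)$. By Lemma~\ref{lem:maxdomdiJacobi}, an element of $\ker(L^\ast-i)$ is exactly a sequence $v=(v_k)_{k\in\Z}\in\ell^2(\Z)$ solving the formal recurrence $a_kv_{k+1}+b_kv_k+a_{k-1}v_{k-1}=iv_k$ for every $k\in\Z$. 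The solution space $V$ of this recurrence (ignoring square-summability) is two-dimensional, since a solution is determined by any two consecutive entries; and $v\in\ell^2(\Z)$ precisely when it is square-summable near $+\infty$ and near $-\infty$. Writing $V_+$ and $V_-$ for the subspaces of $V$ consisting of solutions square-summable near $+\infty$, respectively near $-\infty$, I then have $n_+(L)=\dim(V_+\cap V_-)$.

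The key input is the Weyl theory for the half-line operators $J^{\pm}$, which govern the behaviour of the recurrence near the two ends. By that classification $J^+$ is in the limit point case (deficiency index $0$) exactly when a single solution, up to scalars, is square-summable near $+\infty$, and in the limit circle case (deficiency index $1$) exactly when every solution is; hence $\dim V_+=1+n_+(J^+)$, and symmetrically $\dim V_-=1+n_+(J^-)$. The Grassmann dimension formula gives $\dim(V_+\cap V_-)=\dim V_++\dim V_--\dim(V_++V_-)$, that is $n_+(L)=n_+(J^+)+n_+(J^-)+2-\dim(V_++V_-)$. Since $V_++V_-\subseteq V$ with $\dim V=2$, the asserted identity $n_+(L)=n_+(J^+)+n_+(J^-)$ is therefore equivalent to $V_++V_-=V$. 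This can fail only if $V_+$ and $V_-$ are one and the same one-dimensional space, which in turn forces both $J^+$ and $J^-$ to be in the limit point case.

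The main obstacle is thus to rule out $V_+=V_-$ when both ends are in the limit point case, i.e.\ to show $\ker(L^\ast-i)=0$ there. For this I would use the discrete Green's identity: with the Wronskian $W_k(u,v)=a_k\bigl(u_{k+1}\overline{v_k}-u_k\overline{v_{k+1}}\bigr)$ one has the telescoping relation $\sum_{k=m}^{n}\bigl((Lu)_k\overline{v_k}-u_k\overline{(Lv)_k}\bigr)=W_n(u,v)-W_{m-1}(u,v)$. Applied to $u=v\in\ker(L^\ast-i)$ the summand collapses to $2i\,|v_k|^2$, and summing over $\Z$ (the tails converge because $v\in\ell^2(\Z)$) yields $2i\,\|v\|^2=W_{+\infty}(v,v)-W_{-\infty}(v,v)$. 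In the limit point case the boundary form $\lim_{k\to+\infty}a_k\bigl(f_{k+1}\overline{g_k}-f_k\overline{g_{k+1}}\bigr)$ vanishes for all $f,g$ in the maximal domain; taking $f=g=v$, which lies in the maximal domain at each end since both $v$ and $Lv=iv$ are square-summable there, gives $W_{+\infty}(v,v)=W_{-\infty}(v,v)=0$. Hence $\|v\|^2=0$, so $v=0$. This contradiction shows $V_+\neq V_-$ whenever both ends are limit point, so $V_++V_-=V$ in every case and $n_+(L)=n_+(J^+)+n_+(J^-)$; the statement for $n_-$ follows identically with $z=-i$.
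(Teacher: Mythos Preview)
The paper does not actually prove Theorem~\ref{thm:MassonRepka}; it merely states the result and refers to \cite{MassR} and \cite{Koel-Laredo} for the proof. So there is nothing in the paper to compare your argument against beyond the surrounding discussion of Wronskians and boundary forms.

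Your proof is correct and is essentially the standard argument. The identification $n_+(L)=\dim(V_+\cap V_-)$ together with the Weyl alternative $\dim V_\pm = 1+n_+(J^\pm)$ reduces everything to showing $V_++V_-=V$, and you correctly isolate the only problematic configuration (both ends limit point, $V_+=V_-$). Your use of the Green identity to eliminate that case is clean: the key point, which you handle properly, is that the restriction of $v\in\ker(L^\ast-i)$ to $k\ge 0$ lies in the maximal domain of $J^+$ (it satisfies the $J^+$ recurrence for $k\ge 1$ and differs from $iv$ only at $k=0$), so the self-adjointness of $(J^+)^\ast$ in the limit point case forces the boundary Wronskian at $+\infty$ to vanish; similarly at $-\infty$ via $J^-$. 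This avoids the circularity of invoking self-adjointness of $L^\ast$ directly. The paper's own post-theorem remarks about $\lim_{N\to\infty}[u,\bar v]_N=0$ and $\lim_{M\to-\infty}[u,\bar v]_M=0$ in the essentially self-adjoint case are in fact consequences of exactly this reasoning, so your argument dovetails with the paper's framework even though the paper does not spell it out.
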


For the proof of Theorem \ref{thm:MassonRepka} we refer to \cite{MassR}, \cite{Koel-Laredo}.

We define the Wronskian $[u,v]_k = a_k(u_{k+1}v_k-u_kv_{k+1})$.
\index{Wronskian}
The Wronskian is also known as the Casorati determinant.
\index{Casorati determinant}

\begin{lemma}\label{lem:WronskianLonl2Z}
The Wronskian $[u,v]=[u,v]_k$ is independent
of $k$ for $L^\ast u=z\, u$, $L^\ast v=z\, v$.
Moreover, 
$[u,v]\not=0$ if and only if $u$ and $v$ are linearly
independent solutions. 
\end{lemma}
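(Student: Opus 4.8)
The plan is to prove the two assertions separately, each by direct manipulation of the three-term recurrence
\begin{equation*}
a_k\, w_{k+1} + b_k\, w_k + a_{k-1}\, w_{k-1} = z\, w_k
\end{equation*}
satisfied by any solution $w$ of $L^\ast w = z\, w$.

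For the $k$-independence I would substitute the recurrence directly into the definition of $[u,v]_k$. Solving for the top term gives $a_k u_{k+1} = (z-b_k)u_k - a_{k-1}u_{k-1}$, and likewise $a_k v_{k+1} = (z-b_k)v_k - a_{k-1}v_{k-1}$. Plugging these into
\begin{equation*}
[u,v]_k = a_k u_{k+1}\, v_k - u_k\, a_k v_{k+1},
\end{equation*}
the two contributions $(z-b_k)u_k v_k$ cancel, and what survives is exactly $a_{k-1}(u_k v_{k-1} - u_{k-1}v_k) = [u,v]_{k-1}$. Thus $[u,v]_k = [u,v]_{k-1}$ for every $k\in\Z$, and the common value, which I denote simply $[u,v]$, is independent of $k$.

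For the second claim the point is that $u_k v_{k+1} - u_{k+1}v_k = -a_k^{-1}[u,v]_k$ is, up to the nonzero factor $a_k$, the determinant of the $2\times 2$ matrix with columns $(u_k, u_{k+1})$ and $(v_k, v_{k+1})$; here I use the standing assumption $a_k\neq 0$. As observed in the text, a solution $w$ of $L^\ast w = z\, w$ is completely determined by the initial pair $(w_n, w_{n+1})$ for any fixed $n$, so the evaluation map $w \mapsto (w_n, w_{n+1})$ is a linear isomorphism from the two-dimensional solution space onto $\C^2$. Consequently $u$ and $v$ are linearly independent as solutions if and only if $(u_n, u_{n+1})$ and $(v_n, v_{n+1})$ are linearly independent in $\C^2$, that is, if and only if the determinant above is nonzero, i.e. $[u,v]_n\neq 0$. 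Since the Wronskian is independent of the index by the first part, this yields $[u,v]\neq 0$ precisely when $u$ and $v$ are linearly independent.

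The first computation is routine telescoping and presents no difficulty. The only step requiring a little care is the equivalence in the second part, where one must combine the nonvanishing of the off-diagonal coefficients $a_k$ with the isomorphism between the solution space and $\C^2$ in order to pass from the pointwise (non)proportionality of the initial vectors to genuine linear (in)dependence of the solutions $u$ and $v$; beyond this bookkeeping I anticipate no real obstacle.
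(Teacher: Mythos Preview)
Your argument is correct and is exactly the standard computation the paper has in mind; note that the paper does not actually give a proof but declares it ``straightforward'' and relegates it to Exercise~\ref{ex:lem:WronskianLonl2Z}. Your substitution of the recurrence into $[u,v]_k$ to obtain $[u,v]_{k-1}$, together with the observation that the initial-data map $w\mapsto (w_n,w_{n+1})$ is a linear isomorphism onto $\C^2$ (using $a_k\not=0$), is precisely the intended solution.
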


The proof is straightforward, see Exercise \ref{ex:lem:WronskianLonl2Z}.

Now using Lemma \ref{lem:WronskianLonl2Z} 
\begin{equation*}
\begin{split}
&\sum_{k=M}^N (L^\ast u)_k \bar v_k - u_k \overline{(L^\ast v)_k}
\\ &=
\sum_{k=M}^N (a_ku_{k+1}+b_ku_k+a_{k-1}u_{k-1})\bar v_k
-u_k(a_k\bar v_{k+1}+b_k\bar v_k+a_{k-1}\bar v_{k-1}) \\ &=
\sum_{k=M}^N [u,\bar v]_k - [u,\bar v]_{k-1} =
[u,\bar v]_N-[u,\bar v]_{M-1},
\end{split}
\end{equation*}
so that, cf. Proposition \ref{prop:Lsaonell2w}, 
$$
B(u,v) = \lim_{N\to\infty} [u,\bar v]_N
-\lim_{M\to-\infty}[u,\bar v]_M, \qquad u,v \in \cD^\ast.
$$
In particular, if $J^-$ and $J^+$ are essentially self-adjoint, 
$(L^\ast, \cD^\ast)$ is self-adjoint, and then 
\[
\lim_{M\to-\infty}[u,\bar v]_M=0 \quad  \text{and} \quad \lim_{N\to\infty}[u,\bar v]_N=0.
\]

\subsubsection{The Green kernel and the resolvent operator}

From on we
assume that $J^-$ and $J^-$ have deficiency indices $(0,0)$, so that
$J^-$ and $J^+$ are essentially self-adjoint and by
Theorem \ref{thm:MassonRepka} the deficiency indices of $L$ are
$(0,0)$. 
We refer to e.g. \cite{Koel-Laredo}, \cite{MassR}, for the case that 
one of the operators has deficiency indices $(0,0)$ and the other on $(1,1)$.
This can also be analysed in this framework.  In case $L$ has deficiency indices $(2,2)$
the restriction of the domain of a self-adjoint extension
of $L$ to the Jacobi operator $J^\pm$ does not in general
correspond to a self-adjoint extension of $J^\pm$, cf.
\cite[Thm.~XII.4.31]{DunfS}, so that this is the most difficult situation. 
We restrict ourselves to  the case of essentially self-adjoint $L$ or equivalently
that $J^\pm$ have both deficiency indices $(0,0)$, i.e.
the adjoint of $L$ is self-adjoint. 

Let $z\in\C\backslash\R$, so that we know that $L^\ast -z\Id$ has an inverse in $B(\ell^2(\Z))$,
the bounded linear operators on $\ell^2(\Z)$. The inverse is
denoted by $R(z)$, and is called the \emph{resolvent operator}.\index{resolvent operator}
Introduce the spaces
\begin{equation}\label{eq:Keq409}
\begin{split}
S^-_z &= \{ \{f_k\}_{k=-\infty}^\infty \mid
L^\ast f = z\, f\text{\ and\ } \sum_{k=-\infty}^{-1}
|f_k|^2<\infty \}, \\
S^+_z &= \{ \{f_k\}_{k=-\infty}^\infty \mid
L^\ast f = z\, f\text{\ and\ } \sum_{k=0}^\infty
|f_k|^2<\infty \}.
\end{split}
\end{equation}
Since the solution of a three-term recurrence operator is completely determined by 
two starting values $v_0$, $v_1$, we find $\dim S^\pm_z \leq 2$. The
deficiency index $n_+$, respectively $n_-$, for $L^\ast$ is precisely $\dim(S^+_z\cap S^-_z)$ for
$\Im z>0$, respectively $\Im z<0$. 
From the general theory of orthogonal polynomials we know that $\dim(S^\pm_z)\geq 1$, and 
in case of deficiency indices $(0,0)$ of $J^\pm$ we actually have $\dim(S^\pm_z)= 1$. 
Consequently, in the case of a self-adjoint $(L^\ast, \cD^\ast)$ we 
have $\dim(S^\pm_z)= 1$ and $\dim(S^+_z\cap S^-_z)=0$.

Choose $\Phi_z\in S^-_z$, so that $\Phi_z$ is determined
up to a constant. We assume $\overline{(\Phi_z)_k} =
(\Phi_{\bar z})_k$, which we can do since $L^\ast$ commutes with 
complex conjugation. 
Let $\varphi_z\in S^+_z$, such that
$\overline{(\varphi_z)_k}=(\varphi_{\bar z})_k$. We may assume 
\begin{enumerate}[1.]
\item $[\varphi_z,\Phi_z]\not= 0$,
\item $\tilde\varphi_z$, defined by $(\tilde\varphi_z)_k=0$ for
$k<0$ and $(\tilde\varphi_z)_k=(\varphi_z)_k$ for $k\geq 0$, is
contained in the domain $\cD^\ast$ of the self-adjoint $L^\ast$.
\end{enumerate}

Let $(L^\ast, \cD^\ast)$ be
the self-adjoint extension of $L$, assuming, as before, that
$J^\pm$ have deficiency indices $(0,0)$.
Let $\varphi_z\in S^+_z$, $\Phi_z\in S^-_z$ as before. 
We define the Green kernel for $z\in\C\backslash\R$ by
$$
G_{k,l}(z) = \frac{1}{[\varphi_z,\Phi_z]}\begin{cases}
(\Phi_z)_k\, (\varphi_z)_l, & k\leq l, \\
(\Phi_z)_l\, (\varphi_z)_k, & k>l.
\end{cases}
$$
So $\{ G_{k,l}(z)\}_{k=-\infty}^\infty,
\{ G_{k,l}(z)\}_{l=-\infty}^\infty \in \ell^2(\Z)$ and
$\ell^2(\Z)\ni v\mapsto G(z)v$ given by
$$
(G(z)v)_k = \sum_{l=-\infty}^\infty v_l G(z)_{k,l} =
\langle v, \overline{G_{k,\cdot}(z)}\rangle
$$
is well-defined. For $v\in\cD(\Z)$ we have
$G(z)v\in \cD^\ast$.

\begin{prop}\label{prop:GreenkerLonl2Z} 
The resolvent of $(L^\ast, \cD^\ast)$ is given by $R(z)=G(z)$ for
$z\in\C\backslash\R$.
\end{prop}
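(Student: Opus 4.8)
The plan is to realise the abstract resolvent $R(z)$ through the explicit Green kernel by verifying that $G(z)$ is a two-sided inverse of $L^\ast-z\Id$. Since we already know that for $z\in\C\backslash\R$ the operator $L^\ast-z\Id$ is boundedly invertible with inverse $R(z)$, it is enough to show that $G(z)v=R(z)v$ on the dense subspace $\cD(\Z)$ of finitely supported sequences and then to extend this identity by continuity. The heart of the argument is the computation of $(L^\ast-z)$ applied to a single column of the kernel.

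First I would fix $l\in\Z$ and examine the sequence $g_k=G_{k,l}(z)$. By construction $g_k$ equals $(\Phi_z)_k(\varphi_z)_l/[\varphi_z,\Phi_z]$ for $k\leq l$ and $(\Phi_z)_l(\varphi_z)_k/[\varphi_z,\Phi_z]$ for $k>l$; that is, off the diagonal $g$ is a constant multiple of a solution of $L^\ast f=z\,f$ (namely $\Phi_z$ to the left and $\varphi_z$ to the right). Consequently $(L^\ast g)_k-z\,g_k=a_kg_{k+1}+(b_k-z)g_k+a_{k-1}g_{k-1}$ vanishes for every $k\neq l$, because then the three indices $k-1,k,k+1$ lie on one side of the diagonal and the expression reduces to $L^\ast$ applied to a genuine solution. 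The only surviving term is at $k=l$, where the two pieces meet.

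At $k=l$ I would substitute the three values $g_{l-1}$, $g_l$, $g_{l+1}$ and invoke the relation $a_l(\varphi_z)_{l+1}+(b_l-z)(\varphi_z)_l=-a_{l-1}(\varphi_z)_{l-1}$, which is just $L^\ast\varphi_z=z\varphi_z$ at index $l$. A short manipulation then shows that $(L^\ast g)_l-z\,g_l$ collapses to $a_{l-1}\bigl((\varphi_z)_l(\Phi_z)_{l-1}-(\varphi_z)_{l-1}(\Phi_z)_l\bigr)/[\varphi_z,\Phi_z]$, which is precisely $[\varphi_z,\Phi_z]_{l-1}/[\varphi_z,\Phi_z]$. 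By Lemma \ref{lem:WronskianLonl2Z} the Wronskian is independent of $k$, and it is nonzero because $\varphi_z$ and $\Phi_z$ are linearly independent when the deficiency indices are $(0,0)$ (so that $S^+_z\cap S^-_z=0$); hence this ratio equals $1$. Combining the two cases gives $\bigl((L^\ast-z)G_{\cdot,l}(z)\bigr)_k=\delta_{k,l}$.

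By linearity it follows that $(L^\ast-z)G(z)v=v$ for every $v\in\cD(\Z)$, and since $G(z)v\in\cD^\ast$ for such $v$ (as already noted before the proposition), we obtain $G(z)v=R(z)v$ on the dense subspace $\cD(\Z)$. To pass to all of $\ell^2(\Z)$ I would establish that $G(z)$ is bounded, for instance by a Schur test exploiting that $(\Phi_z)_k$ is square-summable at $-\infty$ and $(\varphi_z)_k$ at $+\infty$, so that the kernel decays away from the diagonal; the boundedness of $R(z)$ then lets me extend the identity from the dense subspace to the whole space. I expect the main obstacle to be precisely this last analytic step --- making the boundedness estimate for $G(z)$ rigorous and confirming that the right-inverse identity survives the limiting process (using that $L^\ast$ is closed) --- whereas the diagonal computation itself is routine once the Wronskian is recognised.
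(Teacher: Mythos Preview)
Your approach is correct and essentially identical to the paper's: both verify $(L^\ast-z)G(z)v=v$ for $v\in\cD(\Z)$ by computing the three-term recurrence applied to the Green kernel and recognising the Wronskian at the diagonal, with the off-diagonal contributions vanishing because $\Phi_z$ and $\varphi_z$ are eigenfunctions. The only cosmetic difference is that you work column-by-column ($v=e_l$) while the paper keeps a general finitely supported $v$; the paper likewise stops at this calculation and defers the remaining analytic details (boundedness, extension by density) to the cited reference, so your discussion of the Schur test and closedness of $L^\ast$ goes a bit beyond what the paper itself writes out.
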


For the proof of Proposition \ref{prop:GreenkerLonl2Z} we
refer to \cite{Koel-Laredo}, and we give here the basic
calculation. For $v\in \cD(\Z)$ 
\begin{equation*}
\begin{split}
[\varphi_z,\Phi_z]&\bigl( (L^\ast -z) G(z)v\bigr)_k =
\sum_{l=-\infty}^{k-1} v_l \bigl( a_k
(\varphi_z)_{k+1}+(b_k-z)(\varphi_z)_k +a_{k-1}(\varphi_z)_{k-1}\bigr)
(\Phi_z)_l \\
& + \sum_{l=k+1}^\infty v_l \bigl( a_k
(\Phi_z)_{k+1}+(b_k-z)(\Phi_z)_k +a_{k-1}(\Phi_z)_{k-1}\bigr)
(\varphi_z)_l  \\
& + v_k\bigl( a_k(\Phi_z)_k(\varphi_z)_{k+1} +
(b_k-z)(\Phi_z)_k(\varphi_z)_k +
a_{k-1}(\Phi_z)_{k-1}(\varphi_z)_k\bigr) \\
= & v_k a_k\bigl((\Phi_z)_k(\varphi_z)_{k+1} -(\Phi_z)_{k+1}
(\varphi_z)_k\bigr) = v_k [\varphi_z,\varphi_z]
\end{split}
\end{equation*}
and canceling the Wronskian gives the result. 


With Proposition \ref{prop:GreenkerLonl2Z} we can calculate
\begin{equation}
\langle G(z)u, v\rangle = \sum_{k,l=-\infty}^\infty
G_{k,l}(z)u_l\bar v_k =
\frac{1}{[\varphi_z,\Phi_z]} \sum_{k\leq l}
(\Phi_z)_k(\varphi_z)_l\bigl( u_l\bar v_k+u_k\bar v_l\bigr) (1-\hf
\de_{k,l}),
\end{equation}

Now the spectral theorem, see \cite[\S XII.4]{DunfS}, \cite[Ch.~13]{Rudi}, 
can be stated as follows. In particular, one sees that 
the resolvent in terms of the Green kernel gives the spectral 
decomposition by the Stieltjes-Perron inversion formula. 

\begin{thm}[Spectral theorem]\label{thmspectralthm} 
Let $T\colon\cD(T)\to \cH$ be an unbounded self-adjoint linear map 
with dense domain $\cD(T)$ in the Hilbert space $\cH$,
then there exists a unique spectral measure $E$ such that
$T=\int_\R t \, dE(t)$, i.e. $\langle Tu,v\rangle =\int_\R t \,
dE_{u,v}(t)$ for $u\in\cD(T)$, $v\in {\mathcal H}$.
Moreover, $E$ is supported on the
spectrum $\si(T)$, which is contained in $\R$.
Moreover, the Stieltjes-Perron inversion formula is valid;
$$
E_{u,v}\bigl( (a,b)\bigr) = \lim_{\de\downarrow 0}
\lim_{\ep\downarrow 0} \frac{1}{2\pi i}
\int_{a+\de}^{b-\de}
\langle R(x+i\ep)u,v\rangle - \langle R(x-i\ep)u,v\rangle \, dx.
$$
\end{thm}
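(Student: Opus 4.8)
The plan is to construct the spectral measure directly from the resolvent, which is the route that makes the Stieltjes-Perron inversion formula transparent and fits the resolvent/Green kernel analysis of this section. Since $(T,\cD(T))$ is self-adjoint, $R(z)=(T-z\Id)^{-1}$ is a bounded operator for every $z\in\C\backslash\R$, and for fixed $u\in\cH$ the scalar function $z\mapsto\langle R(z)u,u\rangle$ is analytic on $\C\backslash\R$. The key point is that this function is of Herglotz (Nevanlinna) type: using $R(z)^\ast=R(\bar z)$ together with the resolvent identity $R(z)-R(w)=(z-w)R(z)R(w)$ one finds $\Im\langle R(z)u,u\rangle=\Im(z)\,\|R(z)u\|^2$, so it maps the upper half-plane into itself. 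By the Herglotz representation theorem there is a positive Borel measure $E_{u,u}$ on $\R$ with $\langle R(z)u,u\rangle=\int_\R (t-z)^{-1}\,dE_{u,u}(t)$, the affine term vanishing because $\|R(z)u\|\to 0$ as $\Im z\to\infty$. Polarisation then yields complex measures $E_{u,v}$ depending sesquilinearly on $(u,v)$.

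Next I would upgrade these scalar measures to a genuine projection-valued measure. For an interval $(a,b)$ one defines $E((a,b))$ as the bounded operator attached to the sesquilinear form $(u,v)\mapsto E_{u,v}((a,b))$; the Stieltjes-Perron inversion formula is then exactly the statement that this form is the boundary jump of $\langle R(\cdot)u,v\rangle$ across the real axis, so the inversion formula and the construction are two aspects of the same computation. The substantive task is to show that each $E((a,b))$ is an orthogonal projection and that $E$ is multiplicative, $E(A)E(B)=E(A\cap B)$. Here the resolvent identity does the heavy lifting: substituting the product $R(z)R(w)$ into the double integral and using the partial-fraction identity $(t-z)^{-1}(t-w)^{-1}=(z-w)^{-1}\bigl((t-z)^{-1}-(t-w)^{-1}\bigr)$ forces relations among the $E_{u,v}$ that pin down idempotency and orthogonality of the ranges.

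I expect this multiplicativity step to be the main obstacle: existence of the scalar measures is essentially automatic from Herglotz theory, but proving that the $E((a,b))$ are commuting self-adjoint idempotents that assemble into a countably additive projection-valued measure with $T=\int_\R t\,dE(t)$ on $\cD(T)$ is the real content of the theorem. An alternative that sidesteps part of this is the Cayley transform $U=(T-i\Id)(T+i\Id)^{-1}$, which is unitary with $1$ not an eigenvalue; one invokes the spectral theorem for bounded operators to obtain a projection-valued measure on the unit circle and transports it to $\R$ via $t\mapsto (t-i)/(t+i)$. Either way uniqueness is immediate, since the inversion formula expresses $E_{u,v}$ in terms of $R(z)$ alone, which is determined by $T$; and $\supp E\subseteq\si(T)$ because $R(\cdot)$ continues analytically across any real interval in the resolvent set, so the Stieltjes-Perron jump vanishes there.
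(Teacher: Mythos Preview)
The paper does not prove this theorem at all: it is stated as a known result from functional analysis, with references to Dunford--Schwartz \cite[\S XII.4]{DunfS} and Rudin \cite[Ch.~13]{Rudi}. The spectral theorem is used here as a tool --- specifically, the Stieltjes--Perron inversion formula is applied to compute the spectral measure of the doubly infinite Jacobi operator from the Green kernel --- not as something to be established within these notes.

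Your sketch is a reasonable outline of one of the standard textbook approaches (Herglotz representation of the resolvent matrix elements, then upgrading to a projection-valued measure), and you correctly identify the multiplicativity/idempotency step as the nontrivial part; the Cayley transform alternative you mention is essentially the route taken in Rudin. But for the purposes of this paper no proof is expected: the appropriate response is simply to cite the standard references, as the author does.
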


Recall that a spectral measure $E$ is a self-adjoint orthogonal projection-valued 
measure on the Borel sets of $\R$ such that $E(\R)=\Id$, $E(\emptyset)=0$,
$E(A\cap B) = E(A)E(B)$ for Borel sets $A$ and $B$ 
and that $\si$-finite additivity with respect to the 
strong operator topology holds, i.e. for all $x\in\cH$ and any sequence
$(A_i)_{i\in\N}$ of mutually disjoint Borel sets we have
\[
E\Bigl( \bigcup_{i\in \N} A_i\Bigr)x = \sum_{i\in\N} E(A_i)x.
\]
In particular $E_{x,y}(B)= \langle E(B)x,y\rangle$ for $x,y\in\cH$ and $B$ a Borel set
gives a complex Borel measure on $\R$, which is positive in case $x=y$.

\subsection{The basic hypergeometric
difference equation}

This example is based on Appendix A in \cite{KoelS-PublRIMS}, which
was greatly motivated by Kakehi \cite{Kake} and unpublished notes
by Koornwinder. 
On a formal level the result can be obtained as a limit case
of the orthogonality of the Askey-Wilson polynomials, see
\cite{KoelS-NATO} for a precise formulation. 

We take the coefficients as
$$
a_k = \hf \sqrt{
(1-\frac{q^{-k}}{r})(1-\frac{cq^{-k}}{d^2r})}, \qquad
b_k =\frac{q^{-k}(c+q)}{2dr},
$$
where we assume $0<q<1$, and $r<0$, $c>0$, $d\in\R$.
This assumption is made in order to get the expression under
the square root sign positive. There are more possible choices
in order to achieve this, see \cite[App.~A]{KoelS-PublRIMS}.
Note that $a_k$ and $b_k$ are bounded for $k<0$, so that
$J^-$ is self-adjoint. Hence, the deficiency indices of $L$ are
$(0,0)$ or $(1,1)$ by Theorem \ref{thm:MassonRepka}.


\begin{lemma}\label{lem:lemma4.5.2} Put
\begin{equation*}
\begin{split}
w_k^2 &= d^{2k} \frac{(cq^{1-k}/d^2r;q)_\infty}
{(q^{1-k}/r;q)_\infty}, \\
f_k(\mu(y)) &=
\rphis{2}{1}{dy,d/y}{c}{q,rq^k}, 
\qquad c\not\in q^{-\N},\quad \mu(y)=\hf(y+y^{-1}), \\
g_k(\mu(y)) &= q^kc^{-k} \, 
\rphis{2}{1}{qdy/c, qd/cy}{q^2/c}{q, rq^k}
\quad \mu(y)=\hf(y+y^{-1}),\ c\notin q^{2+\Z} \\
F_k(y) &= (dy)^{-k} \, 
\rphis{2}{1}{dy,qdy/c}{qy^2}{q,\frac{q^{1-k}c}{d^2r}}
\qquad y^2\not\in q^{-\N},
\end{split}
\end{equation*}
then, with $z=\mu(y)$, we have that
$u_k(z)= w_kf_k(\mu(y))$, $u_k(z)= w_kg_k(\mu(y))$,
$u_k(z)=w_k F_k(y)$ and
$u_k(z)=w_kF_k(y^{-1})$
define solutions to
$$
z\, u_k(z) = a_k \, u_{k+1}(z) + b_k \, u_k(z) +
a_{k-1}\, u_{k-1}(z).
$$
\end{lemma}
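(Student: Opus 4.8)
The plan is to strip off the weight $w_k$ by the substitution $u_k(z)=w_k v_k$, which symmetrizes the recursion into the basic hypergeometric $q$-difference equation \eqref{eq:ex1.13a}, and then to read the four solutions directly off Proposition \ref{prop:solofBqDEat0atinfty}. The parameter dictionary is $a=dy$, $b=d/y$, $c=c$, with argument $x=rq^k$; the crucial feature is that the spectral variable $z=\mu(y)$ enters the operator only through $a+b=d(y+y^{-1})=2dz$, while $ab=d^2$ stays fixed. Under $x=rq^k$ the index shifts $k\mapsto k\pm 1$ correspond to $x\mapsto qx$ and $x\mapsto x/q$, so any solution $\phi$ of \eqref{eq:ex1.13a} yields a candidate $v_k=\phi(rq^k)$.

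First I would compute the weight ratios. Using $(\beta;q)_\infty=(1-\beta)(\beta q;q)_\infty$, the given $w_k^2=d^{2k}(cq^{1-k}/d^2r;q)_\infty/(q^{1-k}/r;q)_\infty$ gives $w_{k+1}^2/w_k^2=d^2(1-cq^{-k}/d^2r)/(1-q^{-k}/r)$. Since $a_k^2=\tfrac14(1-q^{-k}/r)(1-cq^{-k}/d^2r)$ and all factors are positive for $r<0$, $c>0$, taking the positive square roots yields $a_k\,w_{k+1}/w_k=\tfrac{d}{2}(1-cq^{-k}/d^2r)$, and the analogous computation with the index shifted gives $a_{k-1}\,w_{k-1}/w_k=\tfrac{1}{2d}(1-q^{1-k}/r)$. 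These branch choices are mutually consistent because the product of the two equals $a_k^2$.

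With these ratios, dividing the recursion $z\,u_k=a_k u_{k+1}+b_k u_k+a_{k-1}u_{k-1}$ by $w_k$ gives $A_k v_{k+1}+(b_k-z)v_k+C_k v_{k-1}=0$ with $A_k=\tfrac{d}{2}(1-cq^{-k}/d^2r)$ and $C_k=\tfrac{1}{2d}(1-q^{1-k}/r)$. A direct check shows this relation equals $\lambda_k=-q^{-k}/(2dr)$ times the left-hand side of \eqref{eq:ex1.13a} evaluated at $x=rq^k$ with $a=dy$, $b=d/y$ (so $ab=d^2$ and $a+b=2dz$): indeed $\lambda_k(c-d^2x)=A_k$, $\lambda_k(q-x)=C_k$, and $\lambda_k(-(c+q)+2dzx)=b_k-z$. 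Hence $v_k=\phi(rq^k)$ satisfies the three-term recursion for \emph{every} solution $\phi$ of \eqref{eq:ex1.13a}, and therefore $u_k=w_k v_k$ solves the stated recurrence.

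It then remains to match the four stated functions to the four solutions of Proposition \ref{prop:solofBqDEat0atinfty}: $u_1$ gives $f_k$ at once, while for $u_2,u_3,u_4$ one must verify that the prefactors $z^{1-\log_q(c)}$ and $z^{-\log_q(a)}$ collapse to the stated powers of $q^k$. Using $(q^k)^{-\log_q(c)}=c^{-k}$ one has $(rq^k)^{1-\log_q(c)}=r^{1-\log_q(c)}\,q^k c^{-k}$, so $u_2(rq^k)$ is a $k$-independent multiple of $g_k$; likewise $(rq^k)^{-\log_q(dy)}=r^{-\log_q(dy)}(dy)^{-k}$ identifies $u_3(rq^k)$ with a constant multiple of $F_k(y)$, and $u_4$ (which is $u_3$ with $a,b$, hence $y,y^{-1}$, interchanged) with $F_k(y^{-1})$; the argument $qc/(abz)$ becomes $q^{1-k}c/(d^2r)$ as required. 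Since the recursion is linear these $k$-independent constants are harmless. The only genuine bookkeeping obstacle is precisely this handling of the complex powers $\log_q(\cdot)$ together with keeping the square-root branches of $w_k$ consistent across all $k$; the rest is the routine $q$-shifted-factorial algebra already used to establish \eqref{eq:ex1.13a}.
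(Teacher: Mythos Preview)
Your proposal is correct and follows exactly the same approach as the paper: substitute $u_k=w_k v_k$, recognise the resulting recursion for $v_k$ as the basic hypergeometric $q$-difference equation \eqref{eq:ex1.13a} at $x=rq^k$ with $(a,b)=(dy,d/y)$, and then invoke Proposition~\ref{prop:solofBqDEat0atinfty}. The paper's proof is extremely terse (it writes down the transformed recursion and refers to Proposition~\ref{prop:solofBqDEat0atinfty} and \S\ref{sec:BHS-qdiff} in one sentence), whereas you have supplied the explicit weight ratios, the scaling factor $\lambda_k$, and the matching of the powers $z^{1-\log_q c}$ and $z^{-\log_q a}$ to the stated $k$-dependent prefactors; all of this is the detail that the paper leaves to the reader.
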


\begin{proof} Put $u_k(z)=w_kv_k(z)$, then $v_k(z)$ satisfies
$$
2z\, v_k(z) = (d-\frac{cq^{-k}}{dr})\, v_{k+1}(z)
+q^{-k}\frac{c+q}{dr}\, v_k(z)
+ (d^{-1}-\frac{q^{1-k}}{dr})\, v_{k-1}(z)
$$
and this is precisely the second order $q$-difference equation
that has the solutions given, see Proposition 
\ref{prop:solofBqDEat0atinfty} and Section \ref{sec:BHS-qdiff}. 
\end{proof}

The asymptotics of the solutions
of Lemma \ref{lem:lemma4.5.2} can be given as follows.
First observe that $w_{-k} ={\mathcal O}(d^{-k})$ as $k\to\infty$, and
using
$$
 w_k^2=c^k
\frac{(r q^k, d^2r/c, cq/d^2r;q)_\infty}
{(d^2rq^k/c, r, q/ r;q)_\infty} \Rightarrow
w_k={\mathcal O}(c^{\hf k}), \ k\to\infty.
$$
Now $f_k(\mu(y))= {\mathcal O}(1)$ as $k\to\infty$, and
$g_k(\mu(y)) = {\mathcal O}((q/c)^k)$ as $k\to \infty$.
Similarly, $F_{-k}(y)={\mathcal O}((dy)^k)$ as $k\to\infty$.

\begin{prop}\label{prop:4.5.3} The operator $L$ is essentially
self-adjoint for $0<c\leq q^2$, and $L$ has deficiency
indices $(1,1)$ for $q^2<c<1$. Moreover,
for $z\in\C\backslash\R$ the one-dimensional
space $S^-_z$ is spanned by $wF(y)$ with $\mu(y)=z$ and $|y|<1$.
For $0<c\leq q^2$ the one-dimensional
space $S^+_z$ is spanned by $wf(z)$, and for $q^2<c<1$  
the two-dimensional
space $S^+_z$ is spanned by $wf(z)$ and $wg(z)$.
\end{prop}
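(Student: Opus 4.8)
The plan is to reduce the whole statement to a half-line problem via Masson--Repka and then read off the dimensions of $S_z^\pm$ from the asymptotics recorded above. Since $a_k$ and $b_k$ are bounded as $k\to-\infty$, the operator $J^-$ is essentially self-adjoint, so Theorem \ref{thm:MassonRepka} gives that the deficiency indices of $L$ equal those of $J^+$, which are $(0,0)$ or $(1,1)$. Equivalently, $\dim S^-_z=1$ for all $z\in\C\setminus\R$, while $\dim S^+_z\in\{1,2\}$ with $\dim S^+_z=2$ precisely when $J^+$ is indeterminate, i.e.\ when $L$ has deficiency indices $(1,1)$. Thus it suffices to decide, in each regime of $c$, how many of the explicit solutions of Lemma \ref{lem:lemma4.5.2} are square summable at $+\infty$ (for $S^+_z$) and at $-\infty$ (for $S^-_z$).

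First I would pin down $S^-_z$. Combining $w_{-k}=\cO(d^{-k})$ with $F_{-k}(y)=\cO((dy)^k)$ gives $(wF(y))_{-k}=\cO(y^k)$, which is square summable as $k\to\infty$ as soon as $|y|<1$; hence $wF(y)\in S^-_z$ for $\mu(y)=z$ and $|y|<1$. Since $\dim S^-_z=1$, this single solution spans $S^-_z$, proving the third assertion. (The companion $wF(y^{-1})$ then grows like $\cO(y^{-k})$ at $-\infty$ and falls outside $S^-_z$, consistent with one-dimensionality.) Next I would analyse $S^+_z$ using $w_k=\cO(c^{k/2})$. Because $0<c<1$, the solution $wf$ satisfies $(wf)_k=\cO(c^{k/2})$ and is always square summable at $+\infty$, so $wf\in S^+_z$ unconditionally. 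For the second solution, $g_k(\mu(y))=\cO((q/c)^k)$ yields $(wg)_k=\cO((q/\sqrt{c})^k)$, which is square summable exactly when $q/\sqrt{c}<1$, i.e.\ $c>q^2$. In the range $q^2<c<1$ both $wf$ and $wg$ therefore belong to $S^+_z$; they are linearly independent (by Lemma \ref{lem:WronskianLonl2Z} their constant Wronskian is nonzero, which for these $u_1$-- and $u_2$--type solutions is the independence noted in Remark \ref{rmk:linindepsolBHDE}), so $\dim S^+_z=2$ and $L$ has deficiency indices $(1,1)$.

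The remaining point --- essential self-adjointness for $0<c\leq q^2$ --- is where the real work lies, and I expect it to be the main obstacle, because the $\cO$-bounds above establish summability but not its failure. To show $\dim S^+_z=1$ I would argue that $J^+$ cannot be indeterminate: indeterminacy would force \emph{every} solution of $L^\ast u=zu$ to be square summable at $+\infty$, so it is enough to exhibit one solution that is not. Here the $\cO$ estimate for $wg$ must be upgraded to a genuine growth rate. I would do this by a Poincar\'e--Perron analysis of the recurrence for $v_k$ in the proof of Lemma \ref{lem:lemma4.5.2}: dividing by the dominant factor $q^{-k}$, the limiting recurrence has characteristic equation $c\,t^2-(c+q)t+q=0$, with roots $t=1$ and $t=q/c$, which have distinct moduli since $c\leq q^2<q$. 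Hence one solution of the $v$-recurrence genuinely behaves like the recessive root ($f$, with $v_k\to$ const) and the other like $(q/c)^k$; since $wf$ spans the recessive direction and $wg$ is independent of it, $wg$ acquires a nonzero dominant component, so $(wg)_k\sim\text{const}\cdot(q/\sqrt{c})^k$ with $q/\sqrt{c}\geq 1$ (equalling a nonzero constant when $c=q^2$). This gives a non-$\ell^2$ solution, so $J^+$ is determinate, $\dim S^+_z=1$, $S^+_z$ is spanned by $wf$, and $L$ is essentially self-adjoint. Collecting the three regimes yields the stated deficiency indices together with the descriptions of $S^\pm_z$.
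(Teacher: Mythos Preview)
Your approach is correct and essentially what the paper intends: the paper does not give a self-contained proof but refers to \cite[p.~79]{Koel-Laredo} for ``criteria establishing the defect indices of Jacobi operators''; your reduction via Masson--Repka (already stated in the paragraph preceding the proposition), together with reading off square-summability from the explicit asymptotics of $w_k$, $f_k$, $g_k$, $F_k$, is precisely that line of reasoning.

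One simplification: the Poincar\'e--Perron step you describe is unnecessary. The upgrade from $\cO$ to genuine asymptotics is immediate from the explicit formulas. Since $g_k(\mu(y))=(q/c)^k\,{}_2\varphi_1(qdy/c,qd/cy;q^2/c;q,rq^k)$ and the ${}_2\varphi_1$ tends to $1$ as $k\to\infty$, one has $g_k\sim (q/c)^k$ exactly; likewise the alternative expression for $w_k^2$ recorded above gives $w_k\sim \text{const}\cdot c^{k/2}$ with a nonzero constant. Hence $(wg)_k\sim \text{const}\cdot(q/\sqrt{c})^k$ genuinely, which for $0<c<q^2$ diverges and for $c=q^2$ is a nonzero constant, so in either case $wg\notin\ell^2$ at $+\infty$. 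This directly yields $\dim S^+_z=1$ without invoking a limiting characteristic equation. Your Poincar\'e--Perron argument is not wrong, but it rederives information already explicit in the series representations.
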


The proof of Proposition \ref{prop:4.5.3} relies on criteria
establishing the defect indices of Jacobi operators.
We refer to \cite[p.~79]{Koel-Laredo} for the application 
of these criteria leading to Proposition \ref{prop:4.5.3}. 
Since we restrict ourselves to the self-adjoint setting, we
assume from now on that $0<c\leq q^2$. 

%

The Wronskian
$$
[wF(y), wF(y^{-1})] = \lim_{k\to-\infty}
a_k w_{k+1}w_k\bigl(
F_{k+1}(y)F_k(y^{-1})-F_k(y)F_{k+1}(y^{-1})\bigr)
=\hf (y^{-1}-y)
$$
using $a_k\to \hf$ as $k\to-\infty$ and the asymptotics
of $F_k$ and $w_k$ as $k\to-\infty$. 
Note that the Wronskian is non-zero for $y\not= \pm 1$
or $z\not=\pm1$.
Since $wF(y)$ and $wF(y^{-1})$ are linearly independent
solutions to the
recurrence fo $L^\ast f=zf$  
for $z\in\C\backslash\R$, we see that we can express
$f_k(\mu(y))$ in terms of $F_k(y)$ and $F_k(y^{-1})$.
These solutions are related by the expansion
\begin{equation}\label{eq:Keq4191}
\begin{split}
f_k(\mu(y))
&= c(y) F_k(y) + c(y^{-1})F_k(y^{-1}), \\
c(y) &= \frac{(c/dy,d/y,dry,q/dry;q)_\infty}
{(y^{-2},c,r,q/r;q)_\infty},
\end{split}
\end{equation}
for $c\not\in q^{-\N}$,
$y^2\not\in q^\Z$, which is a reformulation 
of \eqref{eq:4.3.2}.
This shows that we have
\begin{equation}\label{eq:Wronskianexpl}
[wf(\mu(y)),wF(y)] =\hf c(y^{-1})(y-y^{-1}).
\end{equation}

Since we assume that $0<c\leq q^2$, $L$ is essentially self-adjoint, or $L^\ast$ 
is self-adjoint. Then for
$z\in\C\backslash\R$ we have
$\varphi_z = wf(z)$ and $\Phi_z=wF(y)$, where $z=\mu(y)$ and
$|y|<1$. In particular, it follows that $\varphi_{x\pm i\ep}\to
\varphi_x$ as $\ep\downarrow 0$.
For the asymptotic solution $\Phi_z$ we have
to be more careful in computing the limit of $z$ to the real axis.
For $x\in \R$ satisfying $|x|>1$ we have
$\varphi_{x\pm i\ep} \rightarrow wF_y$ as
$\ep\downarrow 0$, where
$y\in (-1,1)\backslash \{0\}$ is such that $\mu(y)=x$.
If $x\in [-1,1]$, then we put $x=\cos\chi=\mu(e^{i\chi})$
with $\chi\in [0,\pi]$, and then
$\Phi_{x-i\ep}\rightarrow wF_{e^{i\chi}}$
and $\Phi_{x+i\ep}\rightarrow wF_{e^{-i\chi}}$
as $\ep\downarrow 0$.

We calculate the integrand
in the Stieltjes-Perron inversion formula of 
Theorem \ref{thmspectralthm} 
using Lemma \ref{lem:lemma4.5.2} and 
Proposition \ref{prop:4.5.3} in the
case $|x|<1$, where $x=\cos\chi=\mu(e^{i\chi})$.
For $u,v\in {\mathcal D}(\Z)$ we have 
\begin{equation*}
\begin{split}
&\lim_{\ep\downarrow 0}\langle G(x+i\ep)u,v\rangle -
\langle G(x-i\ep)u,v\rangle = \\ & \lim_{\ep\downarrow 0}
\sum_{k\leq l}
\Bigl( \frac{(\Phi_{x+i\ep})_k(\varphi_{x+i\ep})_l}
{[\varphi_{x+i\ep},\Phi_{x+i\ep}]} -
\frac{(\Phi_{x-i\ep})_k(\varphi_{x-i\ep})_l}
{[\varphi_{x-i\ep},\Phi_{x-i\ep}]} \Bigr)
\bigl( u_l\bar v_k+u_k\bar v_l\bigr) (1-\hf
\de_{k,l})
= \\ &2\sum_{k\leq l}
\Bigl( \frac{w_kF_k(e^{-i\chi})w_lf_l(\cos\chi)}
{c(e^{i\chi})(e^{-i\chi}-e^{i\chi})} -
\frac{w_kF_k(e^{i\chi})w_lf_l(\cos\chi)}
{c(e^{-i\chi})(e^{i\chi}-e^{-i\chi})}\Bigr)
\bigl( u_l\bar v_k+u_k\bar v_l\bigr) (1-\hf
\de_{k,l})
= \\ &2\sum_{k\leq l}
\Bigl( w_kw_lf_l(\cos\chi)
\frac{c(e^{-i\chi})F_k(e^{-i\chi})+c(e^{i\chi})F_k(e^{i\chi})}
{c(e^{i\chi})c(e^{-i\chi})(e^{-i\chi}-e^{i\chi})}
\bigl( u_l\bar v_k+u_k\bar v_l\bigr) (1-\hf
\de_{k,l}) = \\
&2\sum_{k\leq l}
\Bigl( \frac{w_kw_lf_l(\cos\chi)f_k(\cos\chi)}
{c(e^{i\chi})c(e^{-i\chi})(e^{-i\chi}-e^{i\chi})}
\bigl( u_l\bar v_k+u_k\bar v_l\bigr) (1-\hf
\de_{k,l}) = \\
&\frac{2}{c(e^{i\chi})c(e^{-i\chi})(e^{-i\chi}-e^{i\chi})}
\sum_{l=-\infty}^\infty w_lf_l(\cos\chi)u_l
\sum_{k=-\infty}^\infty w_kf_k(\cos\chi)\bar v_k
\end{split}
\end{equation*}
using the expansion \eqref{eq:Keq4191} and the Wronskian in
\eqref{eq:Wronskianexpl}. Now integrate over the interval
$(a,b)$ with $-1\leq a<b\leq 1$ and replacing $x$ by $\cos\chi$, so
that $\frac{1}{2\pi i}dx = (e^{i\chi}-e^{-i\chi})d\chi/4\pi$.
We obtain, with $a=\cos \chi_a$, $b=\cos\chi_b$, and
$0\leq \chi_b<\chi_a\leq\pi$,
\begin{equation*}
\begin{split}
E_{u,v}\bigl( (a,b)\bigr) &= \frac{1}{2\pi}
\int_{\chi_b}^{\chi_a} \bigl({\mathcal F}u\bigr)(\cos\chi)
\overline{\bigl({\mathcal F}v\bigr)(\cos\chi)}
\frac{d\chi}{|c(e^{i\chi})|^2}, \\
\bigl({\mathcal F}u\bigr)(x) &= \langle u, \varphi_x\rangle =
\sum_{l=-\infty}^\infty w_lf_l(\cos\chi)u_l.
\end{split}
\end{equation*}
This shows that $[-1,1]$ is contained in the continuous
spectrum of $L$.

For $|x|>1$ we can calculate as above the integrand in the Stieltjes-Perron
inversion formula, but now we have to use that $x=\mu(y)$ with
$|y|<1$. This gives
$$
\lim_{\ep\downarrow 0}\langle G(x+i\ep)u,v\rangle
= 2\sum_{k\leq l} \frac{w_kF_k(y)w_lf_l(y)}
{c(y^{-1})(y-y^{-1})}
\bigl( u_l\bar v_k+u_k\bar v_l\bigr) (1-\hf
\de_{k,l}).
$$
The limit
$\lim_{\ep\downarrow 0}\langle G(x+i\ep)u,v\rangle$ gives the
same result,  so we can only have discrete mass points
for $|x|>1$ in the spectral measure at the zeroes of the
Wronskian, i.e. at the zeroes of $y\mapsto c(y^{-1})$
with $|y|<1$ or at $y=\pm 1$. Let us assume that all zeroes
of the $c$-function are simple, so that the spectral measure
at these points can be easily calculated.

The zeroes of the $c$-function can be read off from the
expressions in \eqref{eq:Keq4191}, and they are
\[
\{ cq^k/d\mid k\in\N\}, \quad \{ dq^k\mid k\in\N\}, \quad \{ q^k/dr\mid k\in\Z\}. 
\]
Assuming that $|c/d|<1$ and
$|d|<1$, we see that the first two sets do not contribute.
(In the more general case we have that the product is
less than $1$, since the product equals $c<1$. We leave this
extra case to the reader.)
The last set, labeled by $\Z$ always contributes to
the spectral measure. Now
for $u,v\in{\mathcal D}(\Z)$ we let $x_p=\mu(y_p)$,
$y_p=q^p/dr$, $p\in\Z$, with
$|q^p/dr|>1$, so that by the Stieltjes-Perron inversion formula
and Cauchy's residue theorem we find
$$
E_{u,v}(\{ x_p\}) = \text{Res}_{y=y_p^{-1}}
\Bigl( \frac{-1}{c(y^{-1})y}\Bigr)
w_k F_k(y_p^{-1}) w_l f_l(x_p)
\bigl( u_l\bar v_k+u_k\bar v_l\bigr) (1-\hf
\de_{k,l})
$$
after substituting $x=\mu(y)$. Now from \eqref{eq:Keq4191}
we find $f_k(x_p)=c(y_p)F_k(y_p^{-1})$, since $c(y_p^{-1})=0$
and we assume here that $c(y_p)\not=0$. Hence, we
can symmetrise the sum again and find
$$
E_{u,v}(\{ x_p\}) = \Bigl(\text{Res}_{y=y_p}
\frac{1}{c(y^{-1})c(y)y}\Bigr)
\bigl({\mathcal F}u\bigr)(x_p) \overline{\bigl({\mathcal F}v\bigr)(x_p)}
$$
switching to the residue at $y_p$.

We can combine the calculations
in the following theorem. Note that most of the regularity
conditions can be removed by continuity after calculating
explicitly all the residues. The case of an extra set of
finite mass points is left to the reader, as stated above.
Of course, there are also other possibilities for choices of the parameters $c$, $d$ and $r$
for which the expression under the square root sign in $a_k$
in \eqref{eq:defLnonpolcase} is positive. See \cite[App.~A]{KoelS-PublRIMS}
for details.

\begin{thm}\label{thm:orthorel2phi1} Assume $r<0$, $0<c\leq q^2$, $d\in\R$
with $|d|<1$ and $|c/d|<1$ such that
the zeroes of $y\mapsto c(y)$ are simple and $c(y)=0$
implies $c(y^{-1})\not= 0$. Then the spectral measure
for the Jacobi operator on $\ell^2(\Z)$ defined by 
is given by, $A\subset\R$ a Borel set,
\begin{equation*}
\begin{split}
&\langle E(A)u,v\rangle =
\int_{\cos\chi\in [-1,1]\cap A}\bigl({\mathcal F}u\bigr)(\cos\chi)
\overline{\bigl({\mathcal F}v\bigr)(\cos\chi)} \frac{d\chi}
{|c(e^{i\chi})|^2} \\ &+
\sum_{p\in\Z, |q^p/dr|>1, \mu(q^p/dr)\in A}
\Bigl(\text{{\rm Res}}_{y=q^p/dr}
\frac{1}{c(y^{-1})c(y)y}\Bigr)
\bigl({\mathcal F}u\bigr)(\mu(q^p/dr))
\overline{\bigl({\mathcal F}v\bigr)(\mu(q^p/dr))}.
\end{split}
\end{equation*}
\end{thm}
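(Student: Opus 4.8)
The plan is to assemble the spectral resolution of $L$ from the resolvent computation already carried out, feeding it into the Stieltjes-Perron inversion formula of Theorem \ref{thmspectralthm}. First I would invoke Proposition \ref{prop:4.5.3}: under the standing hypothesis $0<c\leq q^2$ the operator $L$ is essentially self-adjoint, the spaces $S_z^\pm$ are one-dimensional, and explicit spanning vectors are $\varphi_z = wf(z)\in S_z^+$ and $\Phi_z = wF(y)\in S_z^-$ with $z=\mu(y)$, $|y|<1$. By Proposition \ref{prop:GreenkerLonl2Z} the resolvent $R(z)=(L^\ast-z)^{-1}$ equals the Green operator $G(z)$ built from these solutions, so $\langle R(z)u,v\rangle$ has the explicit bilinear form already displayed for $u,v\in\cD(\Z)$.

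Next I would feed these boundary values into the inversion formula and split the real axis into $|x|<1$ and $|x|>1$. On $[-1,1]$ the computation performed above identifies the jump across the cut: writing $x=\cos\chi$, the two boundary limits correspond to $\Phi_{x\mp i\ep}\to wF(e^{\pm i\chi})$, and using the connection formula \eqref{eq:Keq4191} together with the Wronskian \eqref{eq:Wronskianexpl} the difference collapses to the rank-one form $\tfrac{2}{c(e^{i\chi})c(e^{-i\chi})(e^{-i\chi}-e^{i\chi})}(\cF u)(\cos\chi)\overline{(\cF v)(\cos\chi)}$. Changing variables by $\tfrac{1}{2\pi i}dx = (e^{i\chi}-e^{-i\chi})\,d\chi/4\pi$ then produces the absolutely continuous part with density $|c(e^{i\chi})|^{-2}$, showing that $[-1,1]$ lies in the continuous spectrum.

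For $|x|>1$ I would use $x=\mu(y)$ with $|y|<1$ and observe that the boundary limits from above and below coincide, so no continuous part arises there; the spectral mass is concentrated at the poles of the resolvent, i.e. at the zeroes of $y\mapsto c(y^{-1})$ with $|y|<1$. Cauchy's residue theorem, applied after the substitution $x=\mu(y)$ and after re-symmetrising via $f_k(x_p)=c(y_p)F_k(y_p^{-1})$ (valid since $c(y_p)\neq 0$ by the regularity hypothesis), yields each point mass $\bigl(\mathrm{Res}_{y=y_p}\tfrac{1}{c(y^{-1})c(y)y}\bigr)(\cF u)(x_p)\overline{(\cF v)(x_p)}$. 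Reading the zeroes of the $c$-function off \eqref{eq:Keq4191} as $\{cq^k/d\}$, $\{dq^k\}$ and $\{q^k/dr\}$, the hypotheses $|c/d|<1$ and $|d|<1$ place the first two families inside $(-1,1)$, so they do not contribute mass points with $|x|>1$, leaving only the $\Z$-indexed family $y_p=q^p/dr$. Combining the continuous and discrete contributions then gives the stated formula.

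The hard part, beyond bookkeeping, is justifying the analytic manipulations underlying the inversion: interchanging the boundary limit $\ep\downarrow 0$ with the double sum over $k\leq l$, which I would control through the $\ell^2$-decay of $w_k$, $f_k$ and $F_k$ established in the asymptotics preceding Proposition \ref{prop:4.5.3} together with dominated convergence; and confirming that these two families of boundary values exhaust the spectrum, so that no singular-continuous part survives outside $[-1,1]$ and the two families of zeroes of the $c$-function lying in $(-1,1)$ genuinely contribute no mass points. The residue evaluations and the simplicity of the zeroes are assumed as regularity conditions in the statement, so they reduce to a direct, if tedious, calculation with the theta-type factors in $c(y)$.
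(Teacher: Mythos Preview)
Your approach is essentially the same as the paper's: the bulk of the argument (Green kernel from Propositions \ref{prop:4.5.3} and \ref{prop:GreenkerLonl2Z}, Stieltjes--Perron inversion, the jump computation on $[-1,1]$ via \eqref{eq:Keq4191} and \eqref{eq:Wronskianexpl}, and the residue calculation for $|x|>1$) is exactly what the text preceding the theorem carries out, and you have reproduced that outline accurately.

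However, you have a gap at the endpoints $x=\pm 1$. Your dichotomy treats $|x|<1$ and $|x|>1$, but at $x=\pm 1$ one has $y=y^{-1}=\pm 1$, so $F(y)$ and $F(y^{-1})$ coincide and the Wronskian $[wF(y),wF(y^{-1})]=\tfrac12(y^{-1}-y)$ vanishes identically, independently of the $c$-function. Neither your continuous-spectrum computation (which relies on two independent asymptotic solutions) nor your residue argument (which locates poles of $1/c(y^{-1})$) covers these degenerate points, and your list of ``hard parts'' does not flag them. The paper's formal proof consists precisely of this remaining step: one must exclude $\pm 1$ from the point spectrum by showing $\varphi_{\pm 1}\notin\ell^2(\Z)$, which is done via the asymptotics of $f_k(\pm 1)$ as $k\to -\infty$ (with a reference to \cite{Kake}, \cite{KoelS-CA}). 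Without this, a point mass at $\pm 1$ is not ruled out and the stated spectral decomposition is incomplete.
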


\begin{proof} It only remains to prove that $\pm 1$ is not
contained in the point spectrum. These are precisely the points
for which $F(y)$ and $F(y^{-1})$ are not linearly independent
solutions. We have to show that $\varphi_{\pm 1}\not\in\ell^2(\Z)$, and
this can be done by determining its asymptotic behaviour
as $k\to-\infty$, see \cite{Kake}, \cite{KoelS-CA} for more
information.
\end{proof}

Take $A=\R$ and $u=e_k$ and $v=e_l$, then we find the following
orthogonality relations for the ${}_2\vp_1$-series as
in Lemma \ref{lem:lemma4.5.2}. 

\begin{cor}\label{cor:thm:orthorel2phi1}
With the notation and assumptions as in Theorem \ref{thm:orthorel2phi1} we have 
\begin{equation*}
\begin{split}
&\int_0^\pi f_k(\cos\chi) f_l(\cos\chi)
\frac{d\chi}{|c(e^{i\chi})|^2} +
\\ &\sum_{p\in\Z, |q^p/dr|>1}
\Bigl({\text{\rm Res}}_{y=q^p/dr}
\frac{1}{c(y^{-1})c(y)y}\Bigr)
f_k(\mu(\frac{q^p}{dr}))f_l(\mu(\frac{q^p}{dr}))
= \frac{\de_{k,l}}{w_k^2}.
\end{split}
\end{equation*} 
\end{cor}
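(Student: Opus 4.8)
The plan is to obtain the corollary as the single special case $A=\R$, $u=e_k$, $v=e_l$ of Theorem~\ref{thm:orthorel2phi1}. The key structural fact is that the spectral measure satisfies $E(\R)=\Id$ and is supported on $\sigma(L)$, so the left-hand side of the spectral resolution collapses to the plain inner product,
\[
\langle E(\R)e_k,e_l\rangle = \langle e_k,e_l\rangle = \de_{k,l}.
\]
First I would evaluate the transform $\mathcal{F}$ on the basis vectors. Since the $m$-th coordinate of $e_k$ is $\de_{m,k}$, the defining formula $(\mathcal{F}u)(\cos\chi)=\sum_m w_m f_m(\cos\chi)u_m$ gives at once $(\mathcal{F}e_k)(\cos\chi)=w_k f_k(\cos\chi)$, and likewise $(\mathcal{F}e_k)(\mu(q^p/dr))=w_k f_k(\mu(q^p/dr))$ at each discrete mass point.

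Next I would substitute these into the right-hand side of Theorem~\ref{thm:orthorel2phi1} taken with $A=\R$. Because all parameters are real ($0<q<1$, $r<0$, $c>0$, $d\in\R$), the weights $w_k$, the solutions $f_k$ evaluated at real argument, and the residue factors $\mathrm{Res}_{y=q^p/dr}\,\tfrac{1}{c(y^{-1})c(y)y}$ are real, so every complex conjugation appearing in the formula is vacuous. The continuous part is the integral over $\cos\chi\in[-1,1]$, which after parametrising by $\chi\in[0,\pi]$ is precisely $\int_0^\pi$, and with $A=\R$ the discrete sum runs over all $p\in\Z$ with $|q^p/dr|>1$. Pulling the common factor $w_k w_l$ out of both the integral and the sum, the identity becomes
\[
\de_{k,l} = w_k w_l\biggl( \int_0^\pi \frac{f_k(\cos\chi)f_l(\cos\chi)}{|c(e^{i\chi})|^2}\,d\chi
+ \sum_{p}\Bigl(\mathrm{Res}_{y=q^p/dr}\tfrac{1}{c(y^{-1})c(y)y}\Bigr) f_k\bigl(\mu(\tfrac{q^p}{dr})\bigr)f_l\bigl(\mu(\tfrac{q^p}{dr})\bigr) \biggr).
\]

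Finally I would divide by $w_k w_l$. On the diagonal $k=l$ this yields the stated normalisation $1/w_k^2$, while for $k\neq l$ both sides vanish, so replacing $w_k w_l$ by $w_k^2$ in the off-diagonal case is harmless; the right-hand side can therefore be written uniformly as $\de_{k,l}/w_k^2$, which is the claim. There is essentially no analytic obstacle at this stage, since all the substantive work---the explicit form of the spectral measure, the reality and asymptotics of the solutions $f_k,F_k$, the $c$-function expansion \eqref{eq:Keq4191}, and the residue evaluation---was already carried out in the proof of Theorem~\ref{thm:orthorel2phi1}. The only points that demand care are the bookkeeping that keeps track of the reality of every quantity so that the conjugations drop out, and the observation that the apparent $w_kw_l$ versus $w_k^2$ normalisation discrepancy is invisible because it occurs exactly where the Kronecker delta forces both sides to be zero.
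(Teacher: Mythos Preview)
Your proposal is correct and follows exactly the approach indicated in the paper: the corollary is obtained by specialising Theorem~\ref{thm:orthorel2phi1} to $A=\R$, $u=e_k$, $v=e_l$, using $E(\R)=\Id$ and $(\mathcal{F}e_k)(\cdot)=w_kf_k(\cdot)$. You have merely spelled out in more detail the bookkeeping (reality of the quantities, the $w_kw_l$ versus $w_k^2$ issue) that the paper leaves implicit in a single sentence.
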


\begin{remark}\label{rmk:cor:thm:orthorel2phi1} 
Theorem \ref{thm:orthorel2phi1} and Corollary \ref{cor:thm:orthorel2phi1} 
have been obtained under the condition that the operator $L^\ast,\cD^\ast)$ is 
self-adjoint, or that $0<c\leq q^2$. 
In \cite{Koel-Laredo} it is shown that in case $q^2<c<1$, there exists
a self-adjoint extension of $(L,\cD(\Z))$ such that the same decomposition
in Theorem \ref{thm:orthorel2phi1} and Corollary \ref{cor:thm:orthorel2phi1}
remain valis. The case $c=q$ is a bit more intricate and requires a limiting process,
since, see Proposition \ref{prop:solofBqDEat0atinfty}, $u_1$ and $u_2$ are the same,
see \cite[App.~C]{GroeKK} for the details. 
\end{remark}

\begin{remark}\label{rmk:thm:orthorel2phi1}
The result in Corollary \ref{cor:thm:orthorel2phi1} can be viewed as 
$q$-analogue of the integral transform pair of the Jacobi functions, 
see \eqref{eq:defJacobifunctions} for the definition. The Jacobi function
transform is an integral transform pair with a ${}_2F_1$-series,
the Jacobi function, as integral kernel, see \cite{Koor-Jacobi} for details. 

Another possible option is to obtain the result of 
Corollary \ref{cor:thm:orthorel2phi1} as a limiting case of the 
orthogonality relation of the Askey-Wilson polynomials, which is 
comparable to the limit transition of the Jacobi polynomials to 
the Bessel functions, see \cite{KoelS-NATO}. 
$q$-Analogues of the Bessel functions in terms of ${}_2\varphi_1$-series 
have also been studied in \cite{Koel-ITSF93}. 
Taking a similar limit in the little $q$-Jacobi polynomials leads to the 
little $q$-Bessel functions (or ${}_1\varphi_1$-$q$-Bessel function or
as the Hahn-Exton $q$-Bessel function) studied by Koornwinder and Swarttouw
\cite{KoorS-TAMS}. These $q$-Bessel functions have been studied 
intensively, see e.g. \cite{FitoD2}, \cite{FitoD}, \cite{KoelSw} as well
as other references.
\end{remark}


\subsection{Exercises}
\begin{enumerate}[1.]
\item\label{ex:lem:maxdomdiJacobi} Prove Lemma \ref{lem:maxdomdiJacobi}. 
\item\label{ex:lem:WronskianLonl2Z} Prove Lemma \ref{lem:WronskianLonl2Z}.
\end{enumerate}

\subsection*{Notes}
The results of this section have been motivated by the paper by Kakehi \cite{Kake} 
and unpublished notes by Koornwinder. The results and techniques have been 
very useful in the study of various problems related to harmonic analysis
on the non-compact quantum group analogue of $SU(1,1)$.
In particular, we have used \cite[App.~A]{KoelS-PublRIMS}, where more general
sets of parameters have been studied, see also \cite{Koel-Laredo}. 
A special case is studied in \cite[App.~C]{GroeKK}. 
In \cite{Koel-IM} another case related to a non-selfadjoint operator is 
studied in detail. 
There is also an approach to doubly infinite Jacobi operators as in \S
\ref{ssec:doublyinfiniteJacobioperators}, due to Krein, and this is 
to relate it to a $2\times 2$-matrix valued three term recurrence on $\N$, see 
e.g. Berezanski\u\i\ \cite[Ch.~VII]{Bere}. 
This leads to the theory of matrix-valued orthogonal polynomials. 

See \cite{Koel-Laredo} for the solution to Exercise \ref{ex:lem:maxdomdiJacobi} 
and Exercise \ref{ex:lem:WronskianLonl2Z}.  
\section{Transmutation properties for the basic $q$-difference equation}\label{sec:transmutation}

In \S \ref{sec:BHS-qdiff} we have discussed the factorisation of the 
basic $q$-difference operator. The Darboux factorisation in \S \ref{sec:BHS-qdiff}
is related to a $q$-shift in both parameters. 
Here we discuss a related shift operator, but we use a relabeling of the 
parameters. Moreover, the shift is more general
and leads to a $q$-analogue of fractional integral operators and other type
of factorisations of the basic $q$-difference operator. 

We rewrite the 
second order hypergeometric $q$-difference operator 
as studied in \S \ref{sec:BHS-qdiff} as 
\begin{equation}\label{eq:KR1.1}
L=L^{(a,b)}= a^2(1+\frac{1}{x})\bigl( T_q-\text{Id}\bigr) +
(1+\frac{aq}{bx})\bigl( T^{-1}_q-\text{Id}\bigl),
\end{equation}
where $T_qf(x)=f(qx)$ for suitable functions $f$ in a suitable Hilbert space.
So we have eigenfunctions to $L$
in terms of basic hypergeometric series, see Proposition \ref{prop:solofBqDEat0atinfty}. 
The little $q$-Jacobi function\index{little $q$-Jacobi function} is defined as
\begin{equation}\label{eq:KR1.2}
\varphi_\la(x;a,b;q) = 
\rphis{2}{1}{a\si,a/\si}{ab}{q,-\frac{bx}{a}}, \quad \la=\hf(\si+\si^{-1})=\mu(\si)
\end{equation}

The little $q$-Jacobi function satisfies
\[
L\varphi_\la(\cdot;a,b;q) =
(-1-a^2+2a\la)\varphi_\la(\cdot;a,b;q). 
\]
We note that the little
$q$-Jacobi functions are eigenfunctions for the eigenvalue
$\la$ of
\begin{equation}\label{eq:KR1.4}
\cL^{(a,b)} = \frac{1}{2a} L^{(a,b)} + \hf(a+a^{-1})
= \frac{a}{2}(1+\frac{1}{x})T_q
- \bigl( \frac{a}{2x} + \frac{q}{2bx}\bigr)\text{Id}
+ \frac{1}{2a}(1+\frac{aq}{bx})T_q^{-1}.
\end{equation}
For simplicity we assume that $a,b>0$, $ab<1$ and $y>0$,
but the results
hold, mutatis mutandis, for the more general range of the
parameters as discussed in \cite[App.~A]{KoelS-PublRIMS}. Then
the operator $L$ is an unbounded symmetric operator on the
Hilbert space $\cH(a,b;y)$ of square integrable
sequences $u=(u_k)_{k\in\Z}$ with respect to the
weights
\begin{equation}\label{eq:KR1.5}
\sum_{k=-\infty}^\infty |u_k|^2 (ab)^k
\frac{(-byq^k/a;q)_\infty}{(-yq^k;q)_\infty},
\end{equation}
where the operator $L$ is initially defined on the
sequences with finitely many non-zero entries, see \S \ref{sec:BHS-qdiff-nonpol},
and where $x=yq^k$.

The goal is to give a general factorisation property in 
Theorem \ref{thm:KRTheorem2.3} and Theorem \ref{thm:KRTheorem2.3ii}. As
a motivation we start by giving a Darboux factorisation of
the second order $q$-difference operator
$L^{(a,b)}$ or $\cL^{(a,b)}$, related to the one in \S \ref{sec:BHS-qdiff}.

The backward $q$-derivative\index{backward $q$-derivative operator}
operator is $B_q=M_{1/x}(1-T_q^{-1})$, where $M_g$ is the
operator of multiplication by $g$;\index{M@$M_g$ multiplication operator}
$\bigl( M_gf\bigr)(x)=g(x)f(x)$, and $T_qf(x)=f(qx)$.\index{T@$T_q$} 
Then $B_q$ is closely related to $\tilde{D}_q$ of \S \ref{sec:BHS-qdiff} with 
inverted base $q\leftrightarrow q^{-1}$. 
Now we check that 
\begin{equation}\label{eq:KR5.1}
\bigl( B_q\varphi_\la(\cdot;a,b;q)\bigr)(x)
= \frac{b(1-a\si)(1-a/\si)}{qa(1-ab)}
\, \varphi_\la(x;aq,b;q).
\end{equation}
Considering $\cH(a,b;y)$ as an $L^2$-space with discrete
weights $(ab)^k (-byq^k/a;q)_\infty/(-yq^k;q)_\infty$ at
the point $yq^k$, $k\in\Z$, we look at $B_q$ as a
(densely defined unbounded) operator
from $\cH(a,b;y)$ to $\cH(aq,b;y)$. Its adjoint,
up to a constant depending only on $y$,
is given by
\begin{equation}\label{eq:KR5.2}
A(a,b) = M_{1+bx/aq} - ab M_{1+x}T_q,
\end{equation}
and it is a straightforward calculation to show that
\begin{equation}\label{eq:KR5.3}
\bigl( A(a,b) \varphi_\la(\cdot;aq,b;q)\bigr)(x) =
(1-ab)\, \varphi_\la(x;a,b;q)
\end{equation}
and that
$-b L^{(a,b)} = aq A(a,b) \circ B_q$, with the
notation as in \eqref{eq:KR1.1}. This calculation is 
essentially the same as done in \S \ref{sec:BHS-qdiff}.

Since
$B_q$ and $A(a,b)$ are triangular with respect
to the standard orthogonal basis of Dirac delta's at $yq^k$
of $\cH(a,b;y)$, this means that we have
a Darboux factorisation of $L^{(a,b)}$. Also,
\[
-b(L^{(aq,b)}+(1-q)(1-qa^2))=aq^2 B_q\circ A(a,b), 
\]
from which we deduce 
\[
B_q\circ L^{(a,b)}=L^{(aq,b)}\circ B_q \quad \text{and} \quad 
L^{(a,b)}\circ A(a,b)=A(a,b)\circ L^{(aq,b)}. 
\]
It is the purpose of this section to generalize these
intertwining properties to arbitrary powers of $B_q$.

Introduce the operator $W_\nu$, $\nu\in\C$, acting on
functions defined on $[0,\infty)$ by
\begin{equation}\label{eq:KR5.4}
\bigl( W_\nu f\bigr)(x) = x^\nu \sum_{l=0}^\infty
f(xq^{-l}) q^{-l\nu} \frac{(q^\nu;q)_l}
{(q;q)_l}, \qquad x\in[0,\infty),
\end{equation}
assuming that the
infinite sum is absolutely convergent if $\nu\not\in -\N$.
So we want
$f$ sufficiently decreasing on a $q$-grid tending to
infinity, e.g. $f(xq^{-l}) =\cO(q^{l(\nu+\ep)})$
for some $\ep>0$. Note that for $\nu\in -\N$ the sum
in \eqref{eq:KR5.4} is finite and $W_0=\text{Id}$ and $W_{-1}=B_q$.


This operator is a $q$-analogue of the Weyl fractional
integral operator\index{Weyl fractional
integral operator} 
as used in \cite[\S 3]{Koor-ArkMat75},
\cite[\S 5.3]{Koor-Jacobi} for the Abel transform.
With the notation
$$
\int_a^\infty f(t)\, d_qt = a\sum_{k=0}^\infty f(xq^{-k})q^{-k}
$$
for the $q$-integral 
we see that for
$n\in\N$ the operator $W_n$ is an iterated $q$-integral;
\begin{equation}\label{eq:KR5.5}
\bigl( W_n f\bigr)(x) = \int_x^\infty \int_{x_1}^\infty \ldots
\int_{x_{n-1}}^\infty f(x_n)\, d_qx_nd_qx_{n-1}\ldots d_qx_1.
\end{equation}

In the following lemma we collect some results on
$W_\nu$, where we use the function space
\begin{equation}\label{eq:KR5.6}
\cF_\rho = \{ f\colon [0,\infty)\to \C \mid\,
|f(xq^{-l})| = \cO(q^{l\rho})
,\ l\to\infty,\ \forall x\in (q,1]\},\qquad \rho>0.
\end{equation}
Recall that $\cL^{(a,b)}$ is defined in
\eqref{eq:KR1.4}.


\begin{lemma}\label{lem:KRLemma5.1} Let
$\nu,\mu\in\C\backslash(-\N)$.
\begin{itemize}
\item[\rm (i)] $W_\nu$ preserves the
space of compactly supported functions,
\item[\rm (ii)] $W_\nu\colon
\cF_\rho\to \cF_{\rho-\Re\nu}$ for $\rho>\Re\nu>0$,
\item[\rm (iii)] $W_\nu\circ W_\mu = W_{\nu+\mu}$
on $\cF_\rho$ for $\rho>\Re(\mu+\nu)>0$,
\item[\rm (iv)] $W_\nu\circ B_q = B_q\circ W_\nu = W_{\nu-1}$
on $\cF_\rho$ for $\rho>\Re\nu-1>0$,
and $B_q^n\circ W_n = \Id$ for $n\in\N$ on $\cF_\rho$
for $\rho>n$,
\item[\rm (v)] $\cL^{(aq^{-\nu},b)}\circ W_\nu =
W_\nu \circ \cL^{(a,b)}$, valid for compactly
supported functions.
\end{itemize}
\end{lemma}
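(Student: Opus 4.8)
The plan is to reduce the whole lemma to elementary facts about the coefficients
$c_l(\nu) = q^{-l\nu}(q^\nu;q)_l/(q;q)_l$, in terms of which $\bigl(W_\nu f\bigr)(x) = x^\nu\sum_{l\ge 0} c_l(\nu)\,f(xq^{-l})$. Part (i) is then immediate: if $\supp f\subset[0,R]$ and $x>0$ is fixed, only the finitely many $l$ with $xq^{-l}\le R$ contribute, while for $x>R$ one has $xq^{-l}\ge x>R$ for every $l\ge0$, so $W_\nu f$ vanishes on $(R,\infty)$. For (ii) I would estimate directly: since $(q^\nu;q)_l/(q;q)_l\to(q^\nu;q)_\infty/(q;q)_\infty$ is bounded in $l$, the hypothesis $|f(xq^{-m-l})|=\cO(q^{(m+l)\rho})$ gives $|\bigl(W_\nu f\bigr)(xq^{-m})|\lesssim q^{m(\rho-\Re\nu)}\sum_{l}q^{l(\rho-\Re\nu)}$, a convergent geometric series when $\rho>\Re\nu$, whence $W_\nu f\in\cF_{\rho-\Re\nu}$.

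For (iii) I would substitute the definition twice and collect the coefficient of $f(xq^{-n})$ in $W_\nu W_\mu f$, which is $x^{\nu+\mu}\sum_{l+m=n}c_l(\nu)q^{-l\mu}c_m(\mu)$; the interchange of the two sums is justified by the absolute convergence from (ii) once $\rho>\Re(\nu+\mu)>0$. It then remains to prove the coefficient identity $\sum_{l+m=n}c_l(\nu)q^{-l\mu}c_m(\mu)=c_n(\nu+\mu)$, which is pure $q$-binomial theorem. By \eqref{eq:qbinomialthm} the generating function is $\sum_l c_l(\nu)t^l=(t;q)_\infty/(tq^{-\nu};q)_\infty$, and writing $c_l(\nu)q^{-l\mu}=q^{-l(\nu+\mu)}(q^\nu;q)_l/(q;q)_l$ with generating function $(tq^{-\mu};q)_\infty/(tq^{-\nu-\mu};q)_\infty$, the product of the two series telescopes to $(t;q)_\infty/(tq^{-\nu-\mu};q)_\infty$, the generating function of $c_n(\nu+\mu)$.

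Part (iv) I would prove termwise. Shifting the summation index in $B_qW_\nu f$ and in $W_\nu B_q f$ reduces each statement to one scalar recurrence, namely $c_l(\nu)-q^{-\nu}c_{l-1}(\nu)=c_l(\nu-1)$ and $q^lc_l(\nu)-q^{l-1}c_{l-1}(\nu)=c_l(\nu-1)$; both follow from a one-line manipulation of $q$-shifted factorials using $(q^\nu;q)_{l-1}(1-q^{\nu-1})=(q^{\nu-1};q)_l$. The identity $B_q^n\circ W_n=\Id$ then follows by iterating $B_q\circ W_k=W_{k-1}$ for $k=n,n-1,\dots,1$: on $\cF_\rho$ with $\rho>n$ every intermediate $W_kf$ ($0\le k\le n$) lies in an $\cF_{\rho-k}$ of positive index, so each step is legitimate and the chain terminates at $W_0=\Id$.

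The substance of the lemma is (v), and here I would compute both sides directly; since $f$ is compactly supported all sums are finite by (i), so rearrangement is free of convergence worries. Writing $a'=aq^{-\nu}$ and using $a'q^\nu=a$ and $q^{-\nu}/a'=1/a$, both $\bigl(W_\nu\cL^{(a,b)}f\bigr)(x)$ and $\bigl(\cL^{(aq^{-\nu},b)}W_\nu f\bigr)(x)$, with $\cL$ as in \eqref{eq:KR1.4}, become $x^\nu$ times a three-term expression in the shifts $f(xq^{-j})$ whose coefficients are linear in $c_{j-1}(\nu),c_j(\nu),c_{j+1}(\nu)$ and split into an $x^0$-part and an $x^{-1}$-part. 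The $x^0$-parts agree identically, each equal to $\tfrac a2 c_{j+1}(\nu)+\tfrac1{2a}c_{j-1}(\nu)$, so everything comes down to the $x^{-1}$-parts; matching these and using that $a$ and $b$ are independent (so their coefficients must vanish separately) collapses the claim to the two instances $(q^{j+1}-1)c_{j+1}(\nu)=(q^j-q^{-\nu})c_j(\nu)$ and $(q^j-1)c_j(\nu)=(q^{j-1}-q^{-\nu})c_{j-1}(\nu)$ of the single first-order recurrence $(1-q^{l+1})c_{l+1}(\nu)=q^{-\nu}(1-q^{l+\nu})c_l(\nu)$. The hard part will therefore not be any deep identity but the bookkeeping of this coefficient comparison; once it is organised by powers of $x$ and by the independent parameters $a,b$, it reduces to that one recurrence. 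As a conceptual check — and an alternative route, proving (v) for $\nu\in\N$ and extending by analyticity in $\nu$, since for fixed $x$ and compactly supported $f$ both sides are entire in $\nu$ — one notes that (v) says $W_\nu$ carries $\la$-eigenfunctions of $\cL^{(a,b)}$ to $\la$-eigenfunctions of $\cL^{(aq^{-\nu},b)}$, consistent with \eqref{eq:KR5.1} at $\nu=-1$.
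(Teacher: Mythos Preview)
Your proposal is correct and follows essentially the same route as the paper: (i) is immediate, (ii) is a direct estimate, (iii) reduces to a convolution identity for the coefficients $c_l(\nu)$, (iv) is a termwise recurrence check, and (v) is a coefficient comparison of $f(xq^{-j})$ on both sides. The only cosmetic difference is in (iii): the paper names the coefficient identity as the $q$-Chu--Vandermonde sum \eqref{eq:qChuVDM}, whereas you prove it via the generating-function telescoping $\frac{(tq^{-\mu};q)_\infty}{(tq^{-\nu-\mu};q)_\infty}\cdot\frac{(t;q)_\infty}{(tq^{-\mu};q)_\infty}=\frac{(t;q)_\infty}{(tq^{-\nu-\mu};q)_\infty}$ --- but this is exactly how the paper itself derives $q$-Chu--Vandermonde from the $q$-binomial theorem, so the two arguments are the same in substance.
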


\begin{remark}
It follows from (iii) that
$W_{-n}=B_q^n$, $n\in\N$, and $W_0=\Id$.
\end{remark}

\begin{proof}
The first statement is immediate from \eqref{eq:KR5.4}.
For (ii) we use that for $f\in\cF_\rho$ and $x\in(q,1]$
we have
$$
| W_\nu f(xq^{-k})| \leq M\sum_{l=0}^\infty q^{(k+l)\rho}
q^{-(k+l)\Re\nu} \frac{(q^{\Re\nu};q)_l}{(q;q)_l} =
M q^{k(\rho-\Re\nu)}
\frac{(q^\rho;q)_\infty}{(q^{\rho-\Re\nu};q)_\infty}
$$
by the $q$-binomial theorem for
$\rho>\Re\nu$.
The third statement is a consequence of interchanging
summations, valid for $f\in\cF_\rho$, $\rho>\Re(\mu+\nu)$,
and
$$
\sum_{k+l=p} \frac{(q^\mu;q)_k (q^\nu;q)_l}
{(q;q)_k(q;q)_l} q^{-(l+k)\mu-l\nu} =
q^{-p(\mu+\nu)} \frac{(q^{\mu+\nu};q)_p}
{(q;q)_p},
$$
which is the $q$-Chu-Vandermonde summation formula
\eqref{eq:qChuVDM}.
For (iv) we note that
$B_q\colon \cF_\rho\to \cF_{\rho+1}$, then
the first statement of (iv) is a simple calculation
involving $q$-shifted factorials, which reduces
the second statement of (iv) to verifying the easy case $n=1$.
For (v) recall \eqref{eq:KR1.4},
so that $\cL^{(aq^{-\nu},b)}(W_\nu f)(x)$ and  
$W_\nu (\cL^{(a,b)}f)(x)$ involve the values $f(xq^{-k})$,
$k+1\in \N$. A straightforward calculation
using $q$-shifted factorials shows that the coefficients
of $f(xq^{-k})$ in
$\cL^{(aq^{-\nu},b)}(W_\nu f)(x)$ and  
$W_\nu (\cL^{(a,b)}f)(x)$ are equal.
 \end{proof}

The asymptotically free solution\index{asymptotically free solution} 
$\Phi_\si(yq^k;a,b;q)$ is defined 
by
\begin{equation}\label{eq:KR1.10AFSol}
\Phi_\si(yq^k;a,b;q) = (a\si)^{-k}
\rphis{2}{1}{a\si,q\si/b}{q\si^2}{q, -\frac{q^{1-k}}{y}}.
\end{equation}
so that, see \eqref{eq:4.3.2},
\begin{equation}
\begin{split}
 \varphi_\la(yq^k;a,b;q) &= c(\si;a,b;q) \Phi_\si(yq^k;a,b;q)
+ c(\si^{-1};a,b;q) \Phi_{\si^{-1}}(yq^k;a,b;q), \\
c(\si;a,b,y;q) &= \frac{(b/\si,a/\si;q)_\infty}
{(\si^{-2},ab;q)_\infty}\frac{(-by\si, -q/by\si;q)_\infty}
{(-by/a,-qa/by;q)_\infty},
\end{split} 
\end{equation}
valid for $\si^2\notin q^\Z$. Then $\Phi_\si$ is the asymptotically
free solution;
\[
L\Phi_\si(\cdot;a,b;q)=(-1-a^2+2a\la)\Phi_\si(\cdot;a,b;q)
\]
on $yq^\Z$
with, as before, $\la=\mu(\si)=\hf(\si+\si^{-1})$.


The asymptotically free solution $\Phi_\si(yq^k;a,b;q)
\in\cF_\rho$ for $q^\rho>|a\si|$ as follows from
\eqref{eq:KR1.10AFSol}. A calculation using the $q$-binomial formula gives,
cf. \eqref{eq:qbinomialthm},
\begin{equation}\label{eq:KR5.7}
\bigl(W_\nu \Phi_\si(\cdot;a,b;q)\bigr)(yq^k)=
y^\nu \frac{(a\si;q)_\infty}
{(aq^{-\nu}\si;q)_\infty}
\Phi_\si(yq^k;aq^{-\nu},b;q),
\end{equation}
for $|a\si|<q^\nu$ in accordance with Lemma 5.1(v).
Note that \eqref{eq:KR5.7} is a $q$-analogue of Bateman's formula,
cf. \cite{Gasp}, \cite{Koor-ArkMat75}.

\begin{lemma}\label{lem:KRLemma5.2} Define the operator
$$
S(a,b) = M_{\frac{(-x;q)_\infty}{(-bx/a;q)_\infty}}
\circ T_{b/a},\qquad T_{b/a}f(x) = f(\frac{b}{a}x),
$$
then $S(a,b)^{-1}\circ \cL^{(a,b)} \circ
S(a,b) = \cL^{(b,a)}$.
In particular, $\tilde W_\nu^{(a,b)} = S(a,bq^{-\nu})\circ
W_\nu\circ S(a,b)^{-1}$ satisfies the intertwining property
$\cL^{(a,bq^{-\nu})}\circ \tilde W_\nu^{(a,b)} =
\tilde W_\nu^{(a,b)} \circ \cL^{(a,b)}$.
\end{lemma}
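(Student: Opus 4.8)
The plan is to prove the conjugation identity $S(a,b)^{-1}\cL^{(a,b)}S(a,b)=\cL^{(b,a)}$ by a direct term-by-term computation, and then to deduce the intertwining property for $\tilde W_\nu^{(a,b)}$ purely formally from this identity together with Lemma \ref{lem:KRLemma5.1}(v). For the first part I would write $\cL^{(a,b)}$ in the normal form read off from \eqref{eq:KR1.4},
\[
\cL^{(a,b)} = M_{c_+}\,T_q + M_{c_0} + M_{c_-}\,T_q^{-1},
\]
with $c_+(x)=\tfrac a2(1+\tfrac1x)$, $c_0(x)=-(\tfrac a{2x}+\tfrac q{2bx})$ and $c_-(x)=\tfrac1{2a}(1+\tfrac{aq}{bx})$, and conjugate each of the three summands separately. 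Writing $S(a,b)=M_g\circ T_{\lambda}$ with $\lambda=b/a$ and $g(x)=(-x;q)_\infty/(-\lambda x;q)_\infty$, the two commutation rules I would use repeatedly are $T_\mu M_h T_\mu^{-1}=M_{h(\mu\,\cdot)}$ for any scaling $T_\mu$, and $M_{1/g}T_q^{\pm1}M_g=M_{(T_q^{\pm1}g)/g}\,T_q^{\pm1}$; since the scalings $T_q$ and $T_\lambda$ commute, each conjugated summand again has the form $M_{(\cdot)}T_q^{\pm1}$ or $M_{(\cdot)}$.

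The one genuinely $q$-special input is the telescoping identity $(-qx;q)_\infty/(-x;q)_\infty=1/(1+x)$, which gives $g(qx)/g(x)=(1+\lambda x)/(1+x)$ and the analogous ratio for $T_q^{-1}$. After applying the $T_\lambda$-rescaling $x\mapsto x/\lambda$ to these rational factors and using the relations $a\lambda=b$ and $(aq/b)\lambda=q$, the prefactor of $T_q$ collapses to $\tfrac b2(1+\tfrac1x)$, that of $T_q^{-1}$ to $\tfrac1{2b}(1+\tfrac{bq}{ax})$, and the $\text{Id}$-coefficient to $-(\tfrac b{2x}+\tfrac q{2ax})$; these are exactly $c_+,c_0,c_-$ with the roles of $a$ and $b$ interchanged, i.e. the coefficients of $\cL^{(b,a)}$. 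This proves the first claim. I expect this bookkeeping — tracking the rational prefactors through the $M_g$-conjugation and the $T_\lambda$-rescaling — to be the main (though entirely routine) obstacle; everything else is formal.

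For the second claim no further computation is needed. Starting from $\tilde W_\nu^{(a,b)}\circ\cL^{(a,b)}=S(a,bq^{-\nu})\,W_\nu\,S(a,b)^{-1}\cL^{(a,b)}$, I would first use the first claim in the form $S(a,b)^{-1}\cL^{(a,b)}=\cL^{(b,a)}\,S(a,b)^{-1}$ to rewrite the tail as $W_\nu\,\cL^{(b,a)}\,S(a,b)^{-1}$. Then Lemma \ref{lem:KRLemma5.1}(v), applied with first parameter $b$ and second parameter $a$, gives $W_\nu\,\cL^{(b,a)}=\cL^{(bq^{-\nu},a)}\,W_\nu$, shifting the leading parameter. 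Finally I would invoke the first claim once more, now with the pair $(a,bq^{-\nu})$, in the form $S(a,bq^{-\nu})\,\cL^{(bq^{-\nu},a)}=\cL^{(a,bq^{-\nu})}\,S(a,bq^{-\nu})$, to pull the operator back out on the left. Assembling these three substitutions yields
\[
\tilde W_\nu^{(a,b)}\circ\cL^{(a,b)}
=\cL^{(a,bq^{-\nu})}\,S(a,bq^{-\nu})\,W_\nu\,S(a,b)^{-1}
=\cL^{(a,bq^{-\nu})}\circ\tilde W_\nu^{(a,b)},
\]
which is the asserted intertwining relation.
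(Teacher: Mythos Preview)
Your proof is correct and follows essentially the same route as the paper: a direct computation of $M_g^{-1}\cL^{(a,b)}M_g$ using the telescoping identity $(-qx;q)_\infty/(-x;q)_\infty=1/(1+x)$, followed by the $T_{b/a}$-rescaling, and then the intertwining for $\tilde W_\nu^{(a,b)}$ deduced formally from two applications of the conjugation identity together with Lemma~\ref{lem:KRLemma5.1}(v). The paper's own proof is terser (it simply displays the result of applying $\cL^{(a,b)}$ to $g\cdot f$ and identifies the bracketed expression as $T_{b/a}\cL^{(b,a)}T_{a/b}f$), but the content is the same as your term-by-term bookkeeping.
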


Note that $S(a,b)^{-1}=S(b,a)$ and that $S(a,b)\colon
\cH(b,a;yb/a)\to \cH(a,b;y)$ is an isometric
isomorphism.
For $f\in\cF_\rho$ we see that
$\bigl(S(a,b)f\bigr)(xq^{-l}) = \cO( |a/b|^lq^{l\rho})$, so that
$S(a,b)f\in \cF_{\rho + \ln(|a/b|)/\ln q}$.

\begin{proof} It follows from \eqref{eq:KR1.4} that
\begin{multline*}
\cL^{(a,b)}\bigl(x\mapsto \frac{(-x;q)_\infty}
{(-bx/a;q)_\infty}f(x)\bigr)(x) = \\ \frac{(-x;q)_\infty}
{(-bx/a;q)_\infty} \Bigl( \frac{b}{2}(1+\frac{a}{bx})f(qx)
+ \frac{1}{2b}(1+\frac{q}{x})f(xq^{-1}) - \hf
(\frac{a}{x}+\frac{q}{bx})f(x)\Bigr)
\end{multline*}
and the term in parentheses can be written as
$T_{b/a}\circ \cL^{(b,a)}\circ T_{a/b}$ applied to $f$.
The second statement then follows from Lemma 5.1(v).
\end{proof}

It follows directly from \eqref{eq:KR1.2}, \eqref{eq:KR1.10AFSol} and 
the last equation of \eqref{eq:1.4.4},
\begin{equation}\label{eq:KR5.8}
\aligned
\bigl( S(a,b)\varphi_\la(\cdot;b,a;q)\bigr)(x)&= \varphi_\la(x;a,b;q), \\
\bigl( S(a,b)\Phi_\si(\cdot;b,a;q)\bigr)(x)&= \Phi_\si(x;a,b;q).
\endaligned
\end{equation}

\begin{thm} \label{thm:KRTheorem2.3} 
Let
$a,b\in\C\backslash\{0\}$, $\nu,\mu\in\C$ with
$|q^{\nu-\mu}b/a|<1$. Define the operator
\begin{multline*}
\bigl(W_{\nu,\mu}(a,b)f\bigr)(x) =
\frac{(-x;q)_\infty}{(-xq^{-\mu};q)_\infty}
q^{-\mu^2}\bigl( \frac{b}{a}\bigr)^\mu x^{\mu+\nu}
\\ \times \sum_{p=0}^\infty
f(xq^{-\mu-p})\, q^{-p\nu}\frac{(q^\nu;q)_p}{(q;q)_p}
\, \rphis{3}{2}
{q^{-p}, q^{-\mu},-q^{1+\mu-\nu}a/bx}{q^{1-p-\nu},-q^{\mu+1}/x}{q,q^{1-\mu}\frac{b}{a}}
\end{multline*}
for any function $f$ with $|f(xq^{-p})|={\cO}(q^{p(\ep+\nu)})$
for some $\ep>0$.
Then 
\[
W_{\nu,\mu}(a,b)\circ
{\cL}^{(a,b)}= {\cL}^{(aq^{-\nu},bq^{-\mu})}
\circ W_{\nu,\mu}(a,b) 
\]
on the space of compactly supported
functions and for $|a\si|<q^\nu$
$$
\bigl(W_{\nu,\mu}(a,b)\Phi_\si(\cdot;a,b;q)\bigr)(yq^k) =
y^{\mu+\nu}
\frac{(a\si,b\si;q)_\infty}
{(aq^{-\nu}\si,bq^{-\mu}\si;q)_\infty}
\Phi_\si(yq^k;aq^{-\nu},bq^{-\mu};q).
$$
\end{thm}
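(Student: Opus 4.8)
The plan is to realise $W_{\nu,\mu}(a,b)$ as a composition of the two one-parameter shift operators already at our disposal: the $a$-shift $W_\nu$ of \eqref{eq:KR5.4} and the $b$-shift $\tilde W_\mu$ of Lemma \ref{lem:KRLemma5.2}. Concretely, I would prove the identification
\[
W_{\nu,\mu}(a,b) = \tilde W_\mu^{(aq^{-\nu},b)} \circ W_\nu,
\qquad \tilde W_\mu^{(aq^{-\nu},b)} = S(aq^{-\nu},bq^{-\mu})\circ W_\mu\circ S(aq^{-\nu},b)^{-1}.
\]
Once this is in place, both displayed conclusions follow by concatenating results that are already established. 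For the intertwining property I would chain the two known intertwining relations: by Lemma \ref{lem:KRLemma5.1}(v) we have $W_\nu\circ \cL^{(a,b)} = \cL^{(aq^{-\nu},b)}\circ W_\nu$, and by Lemma \ref{lem:KRLemma5.2} applied with first parameter $aq^{-\nu}$ we have $\tilde W_\mu^{(aq^{-\nu},b)}\circ \cL^{(aq^{-\nu},b)} = \cL^{(aq^{-\nu},bq^{-\mu})}\circ \tilde W_\mu^{(aq^{-\nu},b)}$. Composing these and inserting the identification yields the claim on compactly supported functions, where Lemma \ref{lem:KRLemma5.1}(i) ensures $W_\nu$ keeps us inside the compactly supported class so that the second operator may legitimately be applied.

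For the action on the asymptotically free solution I would likewise compose two known evaluations. Applying \eqref{eq:KR5.7} gives $\bigl(W_\nu\Phi_\si(\cdot;a,b;q)\bigr)(yq^k) = y^\nu \frac{(a\si;q)_\infty}{(aq^{-\nu}\si;q)_\infty}\Phi_\si(yq^k;aq^{-\nu},b;q)$. To evaluate $\tilde W_\mu^{(aq^{-\nu},b)}$ on this I would use the two relations of \eqref{eq:KR5.8}: the conjugation $S(aq^{-\nu},b)^{-1}=S(b,aq^{-\nu})$ sends $\Phi_\si(\cdot;aq^{-\nu},b;q)$ to $\Phi_\si(\cdot;b,aq^{-\nu};q)$; then \eqref{eq:KR5.7} with the two parameters interchanged produces the factor $(b\si;q)_\infty/(bq^{-\mu}\si;q)_\infty$ together with $\Phi_\si(\cdot;bq^{-\mu},aq^{-\nu};q)$; and the outer $S(aq^{-\nu},bq^{-\mu})$ returns $\Phi_\si(\cdot;aq^{-\nu},bq^{-\mu};q)$. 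Collecting the scalar factors, the two powers of $y$ combining to $y^{\mu+\nu}$, gives exactly the stated evaluation, and the hypothesis $|a\si|<q^\nu$ is precisely what \eqref{eq:KR5.7} demands at the first step.

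The main obstacle is the identification itself, that is, reducing the composition $\tilde W_\mu^{(aq^{-\nu},b)}\circ W_\nu$ to the explicit single-sum ${}_3\vp_2$-kernel in the statement. Writing out $W_\nu$ and expanding $\tilde W_\mu^{(aq^{-\nu},b)}$ through $S$, $W_\mu$, $S^{-1}$ produces a double sum over the two summation indices, together with the quotient $(-x;q)_\infty/(-xq^{-\mu};q)_\infty$ coming from the two multiplication operators and the scalings $T_{b/a}$. The task is to carry out one of the two summations in closed form; I expect the inner sum to collapse by the $q$-Chu--Vandermonde summation \eqref{eq:qChuVDM}, or an equivalent terminating ${}_3\vp_2$ reduction, leaving the stated terminating ${}_3\vp_2$ as kernel and the prefactor $q^{-\mu^2}(b/a)^\mu x^{\mu+\nu}$. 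The convergence condition $|q^{\nu-\mu}b/a|<1$ enters exactly here: it guarantees that the intermediate function $W_\nu f$ lands in a space on which $\tilde W_\mu^{(aq^{-\nu},b)}$ converges, once the shift of decay rate induced by the scalings in $S$ (noted after Lemma \ref{lem:KRLemma5.2}) is accounted for. As a consistency check one can set $\mu=0$: then $\tilde W_0^{(aq^{-\nu},b)}=\Id$, the prefactor and the quotient reduce to $1$, the upper parameter $q^{-\mu}=1$ truncates the ${}_3\vp_2$ to its leading term, and the operator collapses back to $W_\nu$, as it must.
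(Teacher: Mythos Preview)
Your proposal is correct and follows essentially the same route as the paper: define $W_{\nu,\mu}(a,b)=\tilde W_\mu^{(aq^{-\nu},b)}\circ W_\nu$, read off the intertwining from Lemma~\ref{lem:KRLemma5.1}(v) and Lemma~\ref{lem:KRLemma5.2}, and obtain the action on $\Phi_\si$ from \eqref{eq:KR5.7} and \eqref{eq:KR5.8}. The one place you slightly overcomplicate things is the identification with the explicit kernel: no summation formula is needed---after writing the composition as a double sum and reindexing so that the outer sum runs over the total shift $p$, the remaining inner finite sum \emph{is} the terminating ${}_3\vp_2$ directly (once the $q$-shifted factorials coming from the quotients of infinite products in the $S$-factors are rewritten), rather than being reduced to it by $q$-Chu--Vandermonde.
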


\begin{proof}
It follows from Lemma \ref{lem:KRLemma5.1}(v) and Lemma \ref{lem:KRLemma5.2}
that the operator
\begin{align*}
W_{\nu,\mu}(a,b) &= \tilde W_\mu^{(aq^{-\nu},b)}\circ W_\nu 
= S(aq^{-\nu},bq^{-\mu})\circ W_\mu\circ
S(b,aq^{-\nu})\circ W_\nu
\end{align*}
satisfies the required interwining property. For $f\in {\cF}_\rho$ with $\rho>\Re\nu$  we can interchange summations,
which leads
to the sum with a terminating ${}_3\vp_2$ as kernel. Note that
the ${}_3\vp_2$-series in the kernel of $W_{\nu,\mu}(a,b)$
behaves as
$$
\rphis{2}{1}{q^{-\mu},-q^{1+\mu-\nu}a/bx}{-q^{1+\mu}/x}{q, q^{\nu-\mu}\frac{b}{a}}
$$
as $p\to\infty$.

The statement for the action on $\Phi_\si(\cdot;a,b;q)$ follows
immediately from \eqref{eq:KR5.7} and \eqref{eq:KR5.8}.
\end{proof}

The results in Theorem \ref{thm:KRTheorem2.3} deal with the 
fractional $q$-derivative $W_\nu$ related to the point at $\infty$, and these 
operators act nicely on the eigenfunctions $\Phi_\si$  at $\infty$ of the operator $\cL^{(a,b)}$.
We want to have similar statements on a suitable intertwining operator that 
acts nicely on the eigenfunctions $\vp_\la$ at $0$ of the operator $\cL^{(a,b)}$.
In order to get results in this direction, see Theorem \ref{thm:KRTheorem2.3ii},   we
take appropriate adjoints of the previous construction.
Consider $W_\nu$, $\nu\in\C\setminus (-\N)$, as a
densely defined unbounded operator from
$\cH(aq^\nu,b;y)$ to $\cH(a,b;y)$ and define
$R_\nu^{(a,b)}$ as its adjoint, so
\begin{equation}\label{eq:KR5.9}
\langle R^{(a,b)}_\nu f, g\rangle_{\cH(aq^\nu,b;y)} =
\langle f, W_\nu g\rangle_{\cH(a,b;y)}
\end{equation}
for all compactly supported functions $g$, cf. Lemma 5.1(i).
Here we use the identification of $\cH(a,b;y)$ as a weighted
$L^2$-space on a discrete set, see \S 1. A $q$-integration by parts
shows
\begin{equation}\label{eq:KR5.10}
\bigl(R_\nu^{(a,b)} f\bigr)(yq^p)= y^\nu
\frac{(-byq^p/a;q)_\infty}{(-byq^{p-\nu}/a;q)_\infty}
\sum_{l=0}^\infty f(yq^{p+l}) (ab)^l
\frac{(q^\nu,-yq^p;q)_l}{(q,-byq^p/a;q)_l}.
\end{equation}
Now define, for functions $f$, the operator
\begin{equation}\label{eq:KR5.11}
\bigl(A_\nu^{(a,b)} f\bigr)(x)=
\frac{(-bx/a;q)_\infty}{(-bxq^{-\nu}/a;q)_\infty}
\sum_{l=0}^\infty f(xq^l) (ab)^l
\frac{(q^\nu,-x;q)_l}{(q,-bx/a;q)_l},
\end{equation}
so that $A_\nu^{(a,b)}\Big\vert_{{\cH}(a,b;y)}=y^{-\nu}R_\nu^{(a,b)}$. Note that
$A_\nu^{(a,b)}$ is well-defined for bounded functions
assuming $|ab|<1$.
Recall that the dense
domain of finite linear combinations of the basis
vectors for $\cL^{(a,b)}$ corresponds to the
functions compactly supported in $(0,\infty)$.

\begin{lemma}\label{lem:KRLemma 5.3}
$\cL^{(aq^{\nu},b)}\circ A_\nu^{(a,b)} =
A_\nu^{(a,b)} \circ \cL^{(a,b)}$
on the space of functions
compactly supported in $(0,\infty)$. Moreover,
$$
\bigl(A_\nu^{(a,b)}\varphi_\la(\cdot;a,b;q)\bigr)(x) =
 \frac{(abq^\nu;q)_\infty}{(ab;q)_\infty}
\varphi_\la(x;aq^\nu,b;q).
$$
Defining $\tilde A_\nu^{(a,b)} = S(a,bq^\nu)\circ
A_\nu^{(b,a)}\circ S(b,a)$ we have
$\cL^{(a,bq^\nu)}\circ \tilde A_\nu^{(a,b)} =
\tilde A_\nu^{(a,b)} \circ \cL^{(a,b)}$, and
$$
\bigl(\tilde A_\nu^{(a,b)}\varphi_\la(\cdot;a,b;q)\bigr)(x) =
\frac{(abq^\nu;q)_\infty}{(ab;q)_\infty}
\varphi_\la(x;a,bq^\nu;q).
$$
\end{lemma}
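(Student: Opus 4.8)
The plan is to treat the four assertions of Lemma \ref{lem:KRLemma 5.3} in turn: obtain the first intertwining relation by dualising Lemma \ref{lem:KRLemma5.1}(v), deduce the action on $\varphi_\la$ from it, and then transport both statements through the symmetry $S$ of Lemma \ref{lem:KRLemma5.2}.

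For the first identity I would argue by adjoints. Replacing $a$ by $aq^\nu$ in Lemma \ref{lem:KRLemma5.1}(v) gives $\cL^{(a,b)}\circ W_\nu = W_\nu\circ\cL^{(aq^\nu,b)}$, where $W_\nu$ is the densely defined map $\cH(aq^\nu,b;y)\to\cH(a,b;y)$ of \eqref{eq:KR5.9}. Since each $\cL$ is symmetric on its weighted $\ell^2$-space and $R_\nu^{(a,b)}=W_\nu^\ast$ by \eqref{eq:KR5.9}, taking adjoints turns this into $R_\nu^{(a,b)}\circ\cL^{(a,b)} = \cL^{(aq^\nu,b)}\circ R_\nu^{(a,b)}$; concretely, for compactly supported $f,g$ I would pass $\cL$ across the inner products using its formal symmetry (the boundary terms vanish as $f,g$ have finite support) and invoke the re-indexed Lemma \ref{lem:KRLemma5.1}(v) in the middle. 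As $A_\nu^{(a,b)}=y^{-\nu}R_\nu^{(a,b)}$ differs from $R_\nu^{(a,b)}$ only by a scalar that commutes with $\cL$, this yields the relation for $A_\nu^{(a,b)}$. Alternatively I would prove it by matching, on the explicit kernel \eqref{eq:KR5.11}, the coefficient of each $f(xq^l)$ on the two sides, exactly as in the proof of Lemma \ref{lem:KRLemma5.1}(v).

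The second identity is where the real work lies. The intertwining just proved is, through \eqref{eq:KR5.11}, a pointwise identity between convergent $q$-series, so it persists when applied to the bounded function $\varphi_\la(\cdot;a,b;q)$ (here $|ab|<1$ secures convergence). Hence $A_\nu^{(a,b)}\varphi_\la(\cdot;a,b;q)$ is again an eigenfunction of $\cL^{(aq^\nu,b)}$ for the eigenvalue $\la=\mu(\si)$. Inspecting \eqref{eq:KR5.11} shows this image is analytic at $x=0$ (each factor is a ratio of convergent products and each $\varphi_\la(xq^l;a,b;q)$ is a power series in $x$), whereas by Proposition \ref{prop:solofBqDEat0atinfty} the two Frobenius exponents at $0$ give one analytic solution, namely $\varphi_\la(\cdot;aq^\nu,b;q)$, and one non-analytic solution of the form $x^{1-\log_q(\cdots)}(\cdots)$. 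For generic parameters the analytic eigensolutions are therefore one-dimensional, so $A_\nu^{(a,b)}\varphi_\la(\cdot;a,b;q)$ is a scalar multiple of $\varphi_\la(\cdot;aq^\nu,b;q)$. The scalar is read off at $x=0$: there the prefactor and each $(-x;q)_l$, $(-bx/a;q)_l$, and $\varphi_\la(xq^l;a,b;q)$ collapse to $1$, leaving $\sum_{l\geq0}(ab)^l(q^\nu;q)_l/(q;q)_l = \rphis{1}{0}{q^\nu}{-}{q,ab} = (abq^\nu;q)_\infty/(ab;q)_\infty$ by the $q$-binomial theorem \eqref{eq:qbinomialthm}, while $\varphi_\la(0;aq^\nu,b;q)=1$. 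The main obstacle is exactly this step: justifying that the image is the analytic solution rather than a mixture, and extending the compactly supported intertwining to $\varphi_\la$; the alternative of summing \eqref{eq:KR5.11} termwise against the series for $\varphi_\la$ runs into a non-summable inner ${}_2\varphi_1$ and I would avoid it.

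The remaining two statements follow formally by conjugation with $S$. Using Lemma \ref{lem:KRLemma5.2} in the forms $\cL^{(a,bq^\nu)}\circ S(a,bq^\nu)=S(a,bq^\nu)\circ\cL^{(bq^\nu,a)}$ and $\cL^{(b,a)}\circ S(b,a)=S(b,a)\circ\cL^{(a,b)}$, I would insert the $(a,b)\mapsto(b,a)$ case of the first identity, $\cL^{(bq^\nu,a)}\circ A_\nu^{(b,a)}=A_\nu^{(b,a)}\circ\cL^{(b,a)}$, in the middle of the composition $\tilde A_\nu^{(a,b)}=S(a,bq^\nu)\circ A_\nu^{(b,a)}\circ S(b,a)$; the three relations chain to give $\cL^{(a,bq^\nu)}\circ\tilde A_\nu^{(a,b)}=\tilde A_\nu^{(a,b)}\circ\cL^{(a,b)}$. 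For the action on $\varphi_\la$ I would track it through the three factors: by \eqref{eq:KR5.8}, $S(b,a)\varphi_\la(\cdot;a,b;q)=\varphi_\la(\cdot;b,a;q)$; the second identity with $a,b$ swapped turns this into $\frac{(abq^\nu;q)_\infty}{(ab;q)_\infty}\varphi_\la(\cdot;bq^\nu,a;q)$; and a final application of \eqref{eq:KR5.8} with $b$ replaced by $bq^\nu$ sends it to $\frac{(abq^\nu;q)_\infty}{(ab;q)_\infty}\varphi_\la(\cdot;a,bq^\nu;q)$, as claimed.
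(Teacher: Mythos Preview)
Your treatment of the first, third, and fourth assertions matches the paper's proof: the intertwining for $A_\nu^{(a,b)}$ is obtained by dualising Lemma~\ref{lem:KRLemma5.1}(v) through \eqref{eq:KR5.9}, and the statements for $\tilde A_\nu^{(a,b)}$ follow by conjugating with $S$ via Lemma~\ref{lem:KRLemma5.2} and \eqref{eq:KR5.8}.

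For the action on $\varphi_\la$, however, you take a different route. You argue indirectly: the image is an eigenfunction of $\cL^{(aq^\nu,b)}$, it is analytic at $x=0$, and for generic parameters the analytic eigensolution is unique up to scalars, so it must equal a constant times $\varphi_\la(\cdot;aq^\nu,b;q)$; the constant is then read off at $x=0$ via the $q$-binomial theorem. This is valid, but note that the paper proceeds by a direct computation---precisely the termwise approach you dismissed as leading to a ``non-summable inner ${}_2\varphi_1$''. The trick the paper uses, which you may have missed, is to first rewrite $\varphi_\la$ via Heine's transformation (the last identity in \eqref{eq:1.4.4}) as
\[
\varphi_\la(x;a,b;q)=\frac{(-x;q)_\infty}{(-bx/a;q)_\infty}\,\rphis{2}{1}{b\si,b/\si}{ab}{q,-x}.
\]
In this representation the prefactor cancels the ratio $(-x;q)_l/(-bx/a;q)_l$ in \eqref{eq:KR5.11} against shifted infinite products, the inner sum over $l$ collapses to a ${}_1\varphi_0$ summable by \eqref{eq:qbinomialthm}, and one recovers the same Heine-transformed form of $\varphi_\la(\cdot;aq^\nu,b;q)$. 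Your eigenfunction-uniqueness argument avoids this manipulation and is arguably cleaner conceptually, at the price of needing the genericity caveat to ensure the second Frobenius solution is genuinely non-analytic; the paper's direct computation gives the identity without that caveat (for $|x|<1$, then by analytic continuation).
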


\begin{proof} Note that \eqref{eq:KR5.10} and \eqref{eq:KR5.11} show that
the operators $R_\nu^{(a,b)}$ and $A_\nu^{(a,b)}$ preserve
the space of functions compactly supported in $(0,\infty)$.
The intertwining property for $R_\nu^{(a,b)}$
follows from \eqref{eq:KR5.9} and
Lemma 5.1, and hence for $A_\nu^{(a,b)}$.

To calculate the action of $A_\nu^{(a,b)}$ on the
little $q$-Jacobi function we use 
the last equation of \eqref{eq:1.4.4}
to write
\begin{equation}\label{eq:KR5.12}
\varphi_\la(x;a,b;q) = \frac{(-x;q)_\infty}{(-bx/a;q)_\infty}
\, \rphis{2}{1}{b\si,b/\si}{ab}{q,-x}
\end{equation}
Using this in \eqref{eq:KR5.11}, interchanging summations,
which is easily justified for $|x|<1$,
and using the $q$-binomial theorem gives
$$
\bigl(A_\nu^{(a,b)}\varphi_\la(\cdot;a,b;q)\bigr)(x) =
\frac{(abq^\nu,-x;q)_\infty}
{(ab,-bxq^{-\nu}/a;q)_\infty}
\, \rphis{2}{1}{b\si,b/\si}{abq^\nu}{q, -x}
$$
and using \eqref{eq:KR5.12} again gives the result for $|x|<1$.
The general case follows by analytic continuation in $x$,
see  \eqref{eq:KR1.10AFSol}, 
since the convergence in \eqref{eq:KR5.11} for $f$ the little $q$-Jacobi
function is uniform on compact sets for $x$.

The
statements for $\tilde A_\nu^{(a,b)}$ follow from the
corresponding
statements for $A_\nu^{(a,b)}$ and Lemma 5.2 and
\eqref{eq:KR5.8}.
\end{proof}

\begin{thm}\label{thm:KRTheorem2.3ii} 
Let $a,b>0$, $ab<1$, $\nu>0$ and
$\mu\in\C\setminus\Z_{\leq 0}$. Define the operator
\begin{multline*}
\bigl( A_{\nu,\mu}(a,b)f\bigr)(x) =
\frac{(-bxq^\mu/a;q)_\infty}{(-bxq^{\mu-\nu}/a;q)_\infty}
\\ \times \sum_{k=0}^\infty f(xq^{\mu+k})\,
(ab)^k \frac{(q^\nu,-xq^\mu;q)_k}{(q,-bxq^\mu/;q)_k}
\, \rphis{3}{2}{q^{-k}, q^\mu,-bxq^{\mu-\nu}/a}{q^{1-\nu-k},-xq^\mu}{q,q}
\end{multline*}
for any bounded function. Then
${\cL}^{(aq^\nu,bq^\mu)}\circ A_{\nu,\mu}(a,b) =
A_{\nu,\mu}(a,b)\circ {\cL}^{(a,b)}$
on the space of functions compactly supported in
$(0,\infty)$. Moreover,
$$
\bigl(A_{\nu,\mu}(a,b) \varphi_\la(\cdot;a,b;q)\bigr)(x) =
\frac{(abq^{\nu+\mu};q)_\infty}{(ab;q)_\infty}
\, \varphi_\la(x;aq^\nu,bq^\mu;q).
$$
\end{thm}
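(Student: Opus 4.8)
The plan is to realize $A_{\nu,\mu}(a,b)$ as a composition of the two single-shift operators supplied by Lemma \ref{lem:KRLemma 5.3}, mirroring exactly the construction $W_{\nu,\mu}(a,b)=\tilde W_\mu^{(aq^{-\nu},b)}\circ W_\nu$ used for Theorem \ref{thm:KRTheorem2.3}. Since $A_\nu^{(a,b)}$ shifts the first parameter, $a\mapsto aq^\nu$, while $\tilde A_\mu^{(a,b)}$ shifts the second, $b\mapsto bq^\mu$, the natural candidate is
\[
A_{\nu,\mu}(a,b) = \tilde A_\mu^{(aq^\nu,b)}\circ A_\nu^{(a,b)},
\]
which first moves $a\mapsto aq^\nu$ and then $b\mapsto bq^\mu$. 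Both factors preserve the space of functions compactly supported in $(0,\infty)$ (and, by the remark after \eqref{eq:KR5.11}, act on bounded functions when $ab<1$), so the composition is well defined on the stated domain.

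Granting this definition, the intertwining property is immediate from two applications of Lemma \ref{lem:KRLemma 5.3}: using the relation for $\tilde A_\mu$ at base parameter $aq^\nu$ and then the relation for $A_\nu$,
\begin{align*}
\cL^{(aq^\nu,bq^\mu)}\circ \tilde A_\mu^{(aq^\nu,b)}\circ A_\nu^{(a,b)}
&= \tilde A_\mu^{(aq^\nu,b)}\circ \cL^{(aq^\nu,b)}\circ A_\nu^{(a,b)} \\
&= \tilde A_\mu^{(aq^\nu,b)}\circ A_\nu^{(a,b)}\circ \cL^{(a,b)}
\end{align*}
on compactly supported functions. The action on the little $q$-Jacobi function is equally quick: by the two eigenfunction formulas of Lemma \ref{lem:KRLemma 5.3},
\[
A_\nu^{(a,b)}\varphi_\la(\cdot;a,b;q) = \frac{(abq^\nu;q)_\infty}{(ab;q)_\infty}\,\varphi_\la(\cdot;aq^\nu,b;q),
\]
followed by
\[
\tilde A_\mu^{(aq^\nu,b)}\varphi_\la(\cdot;aq^\nu,b;q) = \frac{(abq^{\nu+\mu};q)_\infty}{(abq^\nu;q)_\infty}\,\varphi_\la(\cdot;aq^\nu,bq^\mu;q),
\]
and the two prefactors telescope to $(abq^{\nu+\mu};q)_\infty/(ab;q)_\infty$, which is precisely the asserted value. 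Convergence of the sums for $f=\varphi_\la$ uses boundedness near $0$, and the identity extends to all $x$ by analytic continuation, exactly as in the proof of Lemma \ref{lem:KRLemma 5.3}.

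It remains to collapse the composition into the single-sum ${}_3\varphi_2$ kernel in the statement. First I would expand $\tilde A_\mu^{(aq^\nu,b)} = S(aq^\nu,bq^\mu)\circ A_\mu^{(b,aq^\nu)}\circ S(b,aq^\nu)$, substitute the explicit series \eqref{eq:KR5.11} for both $A_\nu^{(a,b)}$ and the inner $A_\mu^{(b,aq^\nu)}$, and let the multiplication and $q$-dilation factors $S$ act on the $q$-shifted-factorial prefactors. The two dilations telescope, $\tfrac{aq^\nu}{b}\cdot\tfrac{bq^\mu}{aq^\nu}=q^\mu$, so the argument of $f$ becomes $xq^{\mu+k+l}$, where $k,l$ are the two summation indices. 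Reindexing by the total shift and summing over the ways of splitting it (for fixed outer index) should close the inner finite sum into a terminating ${}_3\varphi_2$ at argument $q$, by the same interchange-of-summation/$q$-Chu--Vandermonde mechanism \eqref{eq:qChuVDM} that produced the ${}_3\varphi_2$ in Theorem \ref{thm:KRTheorem2.3}. The main obstacle is precisely this bookkeeping: tracking the several $(\cdot;q)_\infty$ ratios created by the three $S$/multiplication factors and verifying that, after the index shift, the parameters assemble into $q^{-k},q^\mu,-bxq^{\mu-\nu}/a$ over $q^{1-\nu-k},-xq^\mu$ with the correct overall prefactor $(-bxq^\mu/a;q)_\infty/(-bxq^{\mu-\nu}/a;q)_\infty$. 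This is routine but delicate $q$-series algebra rather than anything conceptual.
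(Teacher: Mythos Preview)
Your proposal is correct and follows essentially the same route as the paper: define $A_{\nu,\mu}(a,b)=\tilde A_\mu^{(aq^\nu,b)}\circ A_\nu^{(a,b)}$, read off the intertwining property and the action on $\varphi_\la$ from two applications of Lemma~\ref{lem:KRLemma 5.3}, and then compute the explicit ${}_3\varphi_2$ kernel by expanding the composition and reindexing. The paper's proof is in fact terser than yours on the kernel calculation, merely asserting that the explicit form can be worked out for compactly supported $f$ and extended to bounded $f$ when $\nu>0$.
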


\begin{proof} 
Define
\begin{align*}
A_{\nu,\mu}(a,b) &= \tilde A^{(aq^\nu,b)}_\mu\circ A_\nu^{(a,b)}\\
&= S(aq^\nu,bq^\mu)\circ A_\mu^{(b,aq^\nu)}\circ
S(b,aq^\nu)\circ A_\nu^{(a,b)}
\end{align*}
then it follows from Lemma 5.3 that the intertwining
property is valid. The action on a function $f$ can be calculated
and for $f$ compactly suppported in $(0,\infty)$
we find the explicit result
with the ${}_3\vp_2$-series as kernel.
We can extend the result to bounded $f$ if we require $\nu>0$.

The action of $A_{\nu,\mu}(a,b)$ on the little $q$-Jacobi
function follows from Lemma 5.3.
\end{proof}

These results can be used to obtain several identities 
involving the kernels of the transforms $A_\nu^{(a,b)}$ 
and $W_{\nu,\mu}(a,b)$, involving the 
transform of \S \ref{sec:BHS-qdiff-nonpol}. 
We refer to \cite{KoelR-RMJM02} for examples.


\subsection{Exercises}

\begin{enumerate}[1.]
\item Gasper's $q$-analogue \cite[(1.8)]{Gasp}
of Erd\'elyi's fractional integral is 
\begin{gather*}
\rphis{2}{1}{ar\si,ar/\si}{abrs}{q,
-\frac{byq^l}{ar}} =
\frac{(ab,rs;q)_\infty}{(q,abrs;q)_\infty}\sum_{k=0}^\infty
(ab)^k \frac{(q^{k+1},-byq^{k+l}/a;q)_\infty}
{(rsq^k,-byq^{k+l}/ar;q)_\infty} \\ \times
\rphis{3}{2}{q^{-k},r,ar/b}{rs, -arq^{1-l-k}/by}{q,q}
\, \rphis{2}{1}{a\si,a/\si}{ab}{q,
-\frac{byq^{l+k}}{a}}
\end{gather*}
for $|rs|<1$, $|ab|<1$.
Derive this from Theorem \ref{thm:KRTheorem2.3ii}. 
\end{enumerate}

\subsection*{Notes}
The result of this section are based on \cite{KoelR-RMJM02}, and 
they focus on the fractional analogues of the $q$-derivative 
for the asymptotically free solution of the second order
$q$-difference equation. Several other results related to this 
factorisation of the $q$-difference equation are presented in 
\cite{KoelR-RMJM02}. 
For the classical situation this is related to factoring the 
Jacobi function transform as a product of the Abel transform 
followed by the (standard) Fourier transform, see \cite{Koor-Jacobi} 
for details. 

%
%
%
%






\section{Askey-Wilson level}\label{sec:AW-level}

At the level of the Askey-Wilson polynomials and Askey-Wilson 
functions one considers the second-order $q$-difference operator

\subsection{Askey-Wilson polynomials}\label{ssec:AWpols}

In this section we briefly recall the basic properties of the
Askey-Wilson polynomials. We formulate these properties 
using the concept of duality. 

The Askey-Wilson polynomials 
$p_n(x)=p_n(x;a,b,c,d;q)$, $n\in \N$, 
are defined by 
\begin{equation}\label{eq:KSpn}
p_n(x) = p_n(x;a,b,c,d\mid q)= \rphis{4}{3}{q^{-n},  q^{n-1}abcd, ax, ax^{-1}}{ab, ac, ad}{q,q}
\end{equation}
see \cite{AskeW}. Note that $p_n$ is a polynomial in $z=\mu(x) = \frac12(x+x^{-1})$, but 
we consider it as Laurent polynomial in $x$. 
Usually, see \cite{AskeW}, \cite{KoekLS}, \cite{KoekS}, the normalization is chosen differently
in order to make the Askey-Wilson polynomials symmetric in $a$, $b$, $c$ and $d$.
The Askey-Wilson polynomials $\{p_n\}_{n\in\N}$
form a basis of the polynomial algebra $\C[z]=\C[x+x^{-1}]$
consisting of eigenfunctions of the Askey-Wilson second order 
$q$-difference operator
\begin{equation}\label{eq:KSLAW}
L=\alpha(x)(T_q-1)+\alpha(x^{-1})(T_q^{-1}-1),\qquad 
\alpha(x)=\frac{(1-ax)(1-bx)(1-cx)(1-dx)}{(1-x^2)(1-qx^2)},
\end{equation}
where $(T_q^{\pm 1}f)(x)=f(q^{\pm 1}x)$.

The eigenvalue of $L$ corresponding to the Askey-Wilson polynomial
$p_n$ is $\mu(\gamma_n)$, where
$\gamma_n=\tilde{a}q^n$,  $\tilde{a}=\sqrt{q^{-1}abcd}$, and 
\begin{equation}\label{eq:KSeigenvalue}
\bigl(Lp_n\bigr)(x) = \mu(\gamma_n) p_n(x), \qquad 
\mu(\ga)=-1-\tilde{a}^2+\tilde{a}(\ga+\ga^{-1}).
\end{equation}

In order to describe the orthogonality relations concisely we recall the 
dual parameters to $(a,b,c,d)$. We extend the definition of $\tilde{a}$ to  
\begin{equation}\label{eq:KSdual}
\tilde{a}=\sqrt{q^{-1}abcd},\qquad \tilde{b}=ab/\tilde{a}=q\tilde{a}/cd,\qquad 
\tilde{c}=ac/\tilde{a}=q\tilde{a}/bd,\qquad \tilde{d}=ad/\tilde{a}=
q\tilde{a}/bc.
\end{equation}

\begin{lemma}\label{lem:KSdualdomain}
The assignment 
$(a,b,c,d,t)\mapsto (\tilde{a},\tilde{b},\tilde{c},\tilde{d},
\tilde{t})$
defined by \eqref{eq:KSdual} is an involution.
\end{lemma}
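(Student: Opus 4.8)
The plan is to check the involution property by a direct computation, showing that applying the dual assignment of \eqref{eq:KSdual} twice returns each of $a,b,c,d,t$ to itself. The whole argument hinges on the behaviour of $\tilde a$ under iteration, since $\tilde b,\tilde c,\tilde d$ are all built from $\tilde a$ together with the products $ab,ac,ad$.

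First I would compute the product of the four dual parameters. Inserting $\tilde b=ab/\tilde a$, $\tilde c=ac/\tilde a$, $\tilde d=ad/\tilde a$ from \eqref{eq:KSdual} and using $\tilde a^2=q^{-1}abcd$ gives
\begin{equation*}
\tilde a\,\tilde b\,\tilde c\,\tilde d=\frac{a^3bcd}{\tilde a^2}=\frac{a^3bcd}{q^{-1}abcd}=qa^2 .
\end{equation*}
Hence $q^{-1}\tilde a\tilde b\tilde c\tilde d=a^2$, so that $\tilde{\tilde a}=\sqrt{q^{-1}\tilde a\tilde b\tilde c\tilde d}=a$. Since by construction $\tilde a\tilde b=ab$, $\tilde a\tilde c=ac$ and $\tilde a\tilde d=ad$, the second line of \eqref{eq:KSdual} then yields $\tilde{\tilde b}=\tilde a\tilde b/\tilde{\tilde a}=ab/a=b$, and in the same way $\tilde{\tilde c}=c$ and $\tilde{\tilde d}=d$. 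This settles the involution on $(a,b,c,d)$. For the remaining variable I would recall the definition of $\tilde t$ and verify $\tilde{\tilde t}=t$ by the identical substitution; this again reduces to $\tilde a\tilde b\tilde c\tilde d=qa^2$ together with $\tilde{\tilde a}=a$, so no new computation is required. It is also worth recording that the two expressions given for $\tilde b,\tilde c,\tilde d$ in \eqref{eq:KSdual} are consistent precisely because $\tilde a^2=q^{-1}abcd$, which I would note in passing so that the definition is unambiguous before iterating it.

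The one point that requires care — and the main obstacle — is the branch of the square root defining $\tilde a$. The crucial step $\sqrt{q^{-1}\tilde a\tilde b\tilde c\tilde d}=\sqrt{a^2}=a$ is valid only for a consistent choice of branch; the other sign would give $\tilde{\tilde a}=-a$, and the map would then be an involution only up to sign. I would therefore fix the branch of $\sqrt{\phantom{x}}$ once and for all, as is implicit in \eqref{eq:KSdual}, and confirm that the duality preserves that choice, which is exactly what upgrades the assignment to a genuine involution.
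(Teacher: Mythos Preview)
Your proof is correct and is precisely the direct calculation the paper has in mind; the paper's own proof consists of the single line ``Lemma \ref{lem:KSdualdomain} follows by calculation,'' and you have supplied that calculation. Your remark on the branch of the square root is a fair caveat, and note that for the $t$-part (defined separately in \eqref{eq:KSdualt}) the check only uses $\tilde a\tilde d=ad$, which you already have.
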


Lemma \ref{lem:KSdualdomain} follows by calculation.

The orthogonality relations for the Askey-Wilson polynomials 
hold quite generally, see \cite{AskeW}, \cite{GaspR}, \cite{KoekS}, but we 
assume $0<q<1$ as usual and moreover that $a,b,c$ and $d$ are positive and less than one.
Then Askey and Wilson \cite{AskeW} proved the orthogonality relations
\begin{equation}\label{eq:KSorthoAW}
\frac{1}{2\pi i C_0}\int_{x\in {\mathbb{T}}}p_n(x)p_m(x)\Delta(x)\frac{dx}{x}
=\delta_{m,n}\frac{\underset{x=\gamma_0}{\hbox{Res}}\left(
\frac{\widetilde{\Delta}(x)}{x}\right)}
{\underset{x=\gamma_n}{\hbox{Res}}\left(\frac{\widetilde{\Delta}(x)}{x}\right)}
\end{equation}
where $\de_{m,n}$ is the Kronecker delta and $\T$
is the counterclockwise oriented unit circle in the complex plane,
with the weight function given by
\begin{equation}\label{eq:KSDelta}
\De(x)=\frac{\bigl(x^2,1/x^{2};q\bigr)_{\infty}}
{\bigl(ax,a/x,bx,b/x,cx,c/x,dx,d/x;q\bigr)_{\infty}},
\end{equation}
and with $\widetilde{\De}(x)$ the weight function $\De(x)$
with respect to dual parameters. Here the positive normalization
constant $C_0$ is given by the Askey-Wilson 
integral
\[
C_0=\frac{1}{2\pi i}\int_{x\in {\mathbb{T}}}\Delta(x)\frac{dx}{x}=
\frac{2\bigl(abcd;q\bigr)_{\infty}}
{\bigl(q,ab,ac,ad,bc,bd,cd;q\bigr)_{\infty}}.
\]
Various different proofs of the Askey-Wilson integral exist, 
see e.g. references in \cite{Isma-LN}. The original 
proof follows by an elaborate residue calculus, and now there are many different
approaches to the Askey-Wilson integral as well as to its various extensions,
see \cite{GaspR} for references. 

Having the dual parameters \eqref{eq:KSdual}, 
the explicit expression \eqref{eq:KSpn} for the Askey-Wilson polynomials
show that the duality relation
\begin{equation}\label{eq:KSduality}
p_n(aq^m;a,b,c,d;q)=p_m(\tilde{a}q^n;\tilde{a},\tilde{b},\tilde{c},
\tilde{d};q),\qquad m,n\in \N
\end{equation}
holds. 
The deeper understanding of the duality \eqref{eq:KSdual} 
stems from affine Hecke algebraic 
considerations, see \cite{NoumS}. This duality takes an even nicer form 
in the case of the Askey-Wilson functions, see \ref{ssec:AWfnctiont}. 
The duality \eqref{eq:KSduality} also shows that 
the three-term recurrence relations for the Askey-Wilson polynomials
$p_m(\cdot;\tilde{a},\tilde{b},\tilde{c},\tilde{d};q)$, 
$m\in \N$, follows from the  
eigenvalue equations $Lp_n=\mu(\ga_n)p_n$, 
$n\in\N$, by applying the duality \eqref{eq:KSduality}, 
see e.g. \cite{AskeW} and  \cite{NoumS}.

The orthogonality relations written in the form \eqref{eq:KSorthoAW} 
exhibit the duality \eqref{eq:KSduality} of the Askey-Wilson polynomials
on the level of the orthogonality relations, since it expresses 
the quadratic norms explicitly in terms of the dual 
weight function $\widetilde{\Delta}$.
This description of the quadratic norms was proved in 
\cite{NoumS}.



\subsection{Askey-Wilson function transform}\label{ssec:AWfnctiont}

We define the Askey-Wilson function transform
and we state the main result concerning the 
Askey-Wilson function transform. 
For this we more generally need to consider general, i.e. non-polynomial,
eigenfunctions to 
\begin{equation}\label{eq:KSeigenvalueequation}
\bigl(Lf\bigr)(x)=\mu(\gamma)f(x),
\end{equation}
which reduces to the Askey-Wilson polynomial for $\gamma=\gamma_n$, $n\in\N$,
and enjoys the same duality properties.
The solutions of \eqref{eq:KSeigenvalueequation} have been studied by
Ismail and Rahman \cite{IsmaR-TAMS}.  

Two linearly independent 
solutions of the eigenvalue equation \eqref{eq:KSeigenvalueequation}
can be derived from Ismail's and Rahman's 
\cite[(1.11)--(1.16)]{IsmaR-TAMS} solutions for the
three term recurrence relation of the associated 
Askey-Wilson polynomials. The solutions are given in terms of very well poised
${}_8\varphi_7$ series, in particular 
\begin{equation}\label{eq:KSphi}
\begin{split}
\phi_{\gamma}(x)=&\phi_\ga(x;a,b,c; d\mid q) =\frac{\bigl(qax\gamma/\tilde{d}, qa\gamma/\tilde{d}x;
q\bigr)_{\infty}}
{\bigl(\tilde{a}\tilde{b}\tilde{c}\gamma, q\gamma/\tilde{d},
q\tilde{a}/\tilde{d}, qx/d, q/dx;q\bigr)_{\infty}}\\
&\qquad\times{}_8W_7\bigl(\tilde{a}\tilde{b}\tilde{c}\gamma/q; 
ax, a/x, \tilde{a}\gamma,
\tilde{b}\gamma, \tilde{c}\gamma;q,q/\tilde{d}\gamma\bigr),\qquad
|q/\tilde{d}\gamma|<1
\end{split}
\end{equation}
is a solution to \eqref{eq:KSeigenvalueequation}. 
This solution is called the \emph{Askey-Wilson function}.\index{Askey-Wilson function}
Lemma \ref{lem:KSreductionAWfunction} shows that the Askey-Wilson function
satisfies the same duality, and moreover extends the Askey-Wilson 
polynomials of \eqref{ssec:AWpols}. 

\begin{lemma}\label{lem:KSreductionAWfunction}
The Askey-Wilson function satisfies the duality and reduction formulas
\begin{gather*}
\phi_{\gamma}(x;a;b,c;d\mid q)=\phi_x(\gamma;\tilde{a};\tilde{b},
\tilde{c};\tilde{d}\mid q) \\
\phi_{\gamma_n}(x)=\frac{1}
{\bigl(bc,qa/d,q/ad;q\bigr)_{\infty}}
p_n(x),\qquad n\in \N.
\end{gather*}
and $\phi_{\gamma^{\pm 1}}(x^{\pm 1})
=\phi_{\gamma}(x)$ for all possible choices. 
\end{lemma}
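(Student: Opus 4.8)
The plan is to prove the three assertions as a package, since two of them reduce to the duality. First I would record the algebra forced by \eqref{eq:KSdual} and Lemma \ref{lem:KSdualdomain}: the assignment is an involution, so $\tilde{\tilde a}=a$ and so on, and a one-line computation gives $\tilde b\tilde c=aq/d$, $\tilde a\tilde d=ad$, $q\tilde a/\tilde d=bc$ and $\tilde a\tilde b\tilde c\tilde d=a^2q$. I would also check that the argument $q/\tilde d\gamma$ of the ${}_8W_7$ in \eqref{eq:KSphi} is exactly $(\tilde a\tilde b\tilde c\gamma/q)^2q^2$ divided by the product of the five free numerator parameters $ax, a/x, \tilde a\gamma, \tilde b\gamma, \tilde c\gamma$; this balancing is what makes the Bailey-type transformations applicable.

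The inversion $\phi_\gamma(x)=\phi_\gamma(x^{-1})$ is then immediate from \eqref{eq:KSphi}. Under $x\mapsto x^{-1}$ the prefactor pairs $(qax\gamma/\tilde d,\,qa\gamma/\tilde dx;q)_\infty$ and $(qx/d,\,q/dx;q)_\infty$ are each unchanged because their two factors are interchanged, while the rest of the prefactor is independent of $x$; in the ${}_8W_7$ only the free parameters $ax$ and $a/x$ involve $x$, and they are merely swapped, leaving the series invariant by symmetry in its free numerator parameters. Granting the duality, the remaining inversion $\phi_{\gamma^{-1}}(x)=\phi_\gamma(x)$ comes for free: apply duality, invert the argument of the dual function (the case just proved, now in the variable $\gamma$), and apply duality again. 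Thus the full symmetry $\phi_{\gamma^{\pm1}}(x^{\pm1})=\phi_\gamma(x)$ follows once duality is in hand.

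The duality $\phi_\gamma(x;a,b,c,d)=\phi_x(\gamma;\tilde a,\tilde b,\tilde c,\tilde d)$ is the heart of the lemma, and I expect it to be the main obstacle. The two very-well-poised series have genuinely different free-parameter sets, $\{ax,a/x,\tilde a\gamma,\tilde b\gamma,\tilde c\gamma\}$ versus $\{\tilde a\gamma,\tilde a/\gamma,ax,bx,cx\}$, so this is not a reindexing but requires a transformation relating two ${}_8W_7$'s. The plan is to apply Bailey's transformation for very-well-poised ${}_8W_7$ series (Gasper--Rahman \cite[\S2.10]{GaspR}), combined with the manifest symmetry in the free parameters, to carry the series of $\phi_\gamma(x;a,b,c,d)$ onto that of $\phi_x(\gamma;\tilde a,\tilde b,\tilde c,\tilde d)$, and then to check that the ratio of $q$-shifted-factorial prefactors produced by the transformation is exactly the prefactor demanded by \eqref{eq:KSphi} with dual parameters, via the identities recorded above. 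The hard part is this prefactor bookkeeping --- manipulating the infinite products and repeatedly invoking the involution and $\tilde b\tilde c=aq/d$, $\tilde a\tilde d=ad$ to recognize the dual prefactor --- together with confirming that the constraint $|q/\tilde d\gamma|<1$ is respected on both sides.

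Finally, for the reduction I would set $\gamma=\gamma_n=\tilde aq^n$ and expand the non-terminating ${}_8W_7$ as a sum of two balanced ${}_4\varphi_3$'s by Bailey's two-term formula (\cite[\S2.11]{GaspR}), taking the five free parameters in the order $(\tilde b\gamma,\tilde c\gamma;\,ax,a/x,\tilde a\gamma)$. With this choice the first ${}_4\varphi_3$ has numerator $\tilde a/\gamma,\,ax,\,a/x,\,\tilde a\gamma$ over $ab,\,ac,\,ad$ with argument $q$; at $\gamma=\gamma_n$ one has $\tilde a/\gamma=q^{-n}$ and $\tilde a\gamma=q^{n-1}abcd$, so this series is precisely the terminating ${}_4\varphi_3$ of \eqref{eq:KSpn} equal to $p_n(x)$. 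The companion term carries a prefactor that vanishes there, since one of its $q$-shifted factorials of the form $(\,\cdot\,;q)_\infty$ acquires the value $(\tilde a/\gamma;q)_\infty=(q^{-n};q)_\infty=0$ while the companion series stays finite. It then remains to simplify the surviving prefactor, using $\tilde a\gamma_n=abcd\,q^{n-1}$, $\tilde b\gamma_n=abq^n$, $\tilde c\gamma_n=acq^n$ and $q\tilde a/\tilde d=bc$, and to see that all $x$- and $n$-dependence cancels, leaving the constant $1/(bc,qa/d,q/ad;q)_\infty$ as claimed.
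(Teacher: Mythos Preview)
Your proposal is correct in outline, but it takes a longer route than the paper's proof, and in fact the shortcut is already implicit in what you wrote.

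For the reduction you plan to expand the ${}_8W_7$ as a sum of two balanced ${}_4\varphi_3$'s via Bailey's two-term formula; this is exactly what the paper does, citing \cite[(III.36)]{GaspR}, and your analysis of how the second term dies at $\gamma=\gamma_n$ because of the factor $(\tilde a/\gamma;q)_\infty$ matches the paper verbatim. The divergence is in how you handle the duality. You propose proving $\phi_\gamma(x;a,b,c,d)=\phi_x(\gamma;\tilde a,\tilde b,\tilde c,\tilde d)$ by a separate ${}_8W_7\to{}_8W_7$ Bailey transformation from \cite[\S 2.10]{GaspR}, and you flag the prefactor bookkeeping as the main obstacle. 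The paper sidesteps this entirely: once the ${}_4\varphi_3$ representation \eqref{eq:KS43presentation} is written down, the duality (and the full $x\leftrightarrow x^{-1}$, $\gamma\leftrightarrow\gamma^{-1}$ symmetry) is literally visible by inspection. Indeed, using the identities you already recorded ($\tilde a\tilde b=ab$, $\tilde a\tilde c=ac$, $\tilde a\tilde d=ad$, $\tilde b\tilde c=qa/d$, $q\tilde a/\tilde d=bc$, $q/\tilde a\tilde d=q/ad$), each ${}_4\varphi_3$ and its prefactor in \eqref{eq:KS43presentation} is term-by-term invariant under $(a,b,c,d,x)\leftrightarrow(\tilde a,\tilde b,\tilde c,\tilde d,\gamma)$.

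So the paper uses \emph{one} transformation formula and reads off all three assertions, whereas you invoke one transformation for duality and a second (the same one the paper uses) for reduction. Your argument would go through, but you are doing strictly more work: the ${}_4\varphi_3$ expansion you need anyway for the reduction already hands you the duality and the $\gamma$-inversion for free, with no ${}_8W_7\to{}_8W_7$ computation required.
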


Note that duality 
\eqref{eq:KSduality} for the Askey-Wilson polynomials is a special case of
the duality  of $\phi_{\gamma}$ in Lemma \ref{lem:KSreductionAWfunction}.

\begin{proof} 
The proof rests on a formula expressing a very-well poised
${}_8\varphi_7$-series as a sum of two balanced ${}_4\varphi_3$-series 
given by Bailey's formula \cite[(III.36)]{GaspR}. 
This gives 
\begin{equation}\label{eq:KS43presentation}
\begin{split}
\phi_{\gamma}(x)=&\frac{1}
{\bigl(bc,qa/d,q/ad;q\bigr)_{\infty}}
{}_4\phi_3\left(\begin{matrix} ax, a/x, \tilde{a}\gamma, \tilde{a}/\gamma\\
 ab, ac, ad \end{matrix}; q,q\right)\\
+&\frac{\bigl(ax, a/x, \tilde{a}\gamma, \tilde{a}/\gamma, qb/d, 
qc/d;q\bigr)_{\infty}}
{\bigl(qx/d, q/dx, q\gamma/\tilde{d}, q/\tilde{d}\gamma, 
ab,ac,bc,qa/d,ad/q;q\bigr)_{\infty}}\\
&\qquad\qquad\times
{}_4\phi_3\left(\begin{matrix} qx/d, q/dx, q\gamma/\tilde{d},
q/\tilde{d}\gamma\\
 qb/d, qc/d, q^2/ad \end{matrix}; q,q\right),
\end{split}
\end{equation}
hence
$\phi_{\gamma}(x)$ extends to a meromorphic function in $x$ and $\gamma$
for generic parameters $a,b,c$ and $d$, with possible poles at
$x^{\pm 1}=q^{1+k}/d$, $k\in\N$, and
$\gamma^{\pm 1}=q^{1+k}/\tilde{d}$, $k\in \N$.
It follows from \eqref{eq:KS43presentation} that
$\phi_{\gamma^{\pm 1}}(x^{\pm 1})
=\phi_{\gamma}(x)$ (all sign combinations possible), and that
$\phi_{\gamma}$ satisfies the duality relation by inspection. 

Finaly, observe that the meromorphic continuation \eqref{eq:KS43presentation}
of $\phi_{\gamma}(x)$ implies that 
\begin{equation}\label{redpol}
\phi_{\gamma_n}(x)=\frac{1}
{\bigl(bc,qa/d,q/ad;q\bigr)_{\infty}}
p_n(x),\qquad n\in\N.
\end{equation}

Indeed, the factor $\bigl(\tilde{a}/\gamma;q\bigr)_{\infty}$
in front of the second ${}_4\varphi_3$ in \eqref{eq:KS43presentation} 
vanishes for $\gamma=\gamma_n=
\tilde{a}q^n$ for $n\in\N$. 
\end{proof}

At this stage we need to specify
a particular parameter domain for the five parameters $(a,b,c,d,t)$
in order to ensure positivity of measures.

\begin{defn}\label{def:AWsetV}
Let $V$ be the set of parameters $(a,b,c,d,t)\in \R^{5}$
satisfying 
\begin{equation*}
t<0,\qquad0<b,c\leq a<d/q,\qquad bd,cd\geq q,\qquad ab,ac<1.
\end{equation*}
\end{defn}

Observe that $b,c<1$ and $d>q$ for all $(a,b,c,d,t)\in V$.
We extend the duality of \eqref{eq:KSdual} to 
\begin{equation}\label{eq:KSdualt}
\tilde{t}=1/qadt.
\end{equation}

The domain $V$ is self-dual extending Lemma \ref{lem:KSdualdomain}. 

\begin{lemma}\label{lem:KS2dualdomain}
The assignment 
$(a,b,c,d,t)\mapsto (\tilde{a},\tilde{b},\tilde{c},\tilde{d},
\tilde{t})$
defined by \eqref{eq:KSdual} and \eqref{eq:KSdualt}, is an involution on $V$.
\end{lemma}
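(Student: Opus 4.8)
The plan is to verify the two distinct assertions hidden in the statement: that the five-parameter assignment squares to the identity, and that it maps the domain $V$ into itself. For the involution property, Lemma \ref{lem:KSdualdomain} already disposes of the $(a,b,c,d)$-coordinates, so the only new check is that the $t$-rule \eqref{eq:KSdualt} is involutive. Here I would use the manifestly invariant combination $\tilde a\tilde d=\tilde a\,(ad/\tilde a)=ad$: applying \eqref{eq:KSdualt} a second time gives $\tilde{\tilde t}=1/(q\,\tilde a\tilde d\,\tilde t)=1/(q\,ad\,\tilde t)$, and substituting $\tilde t=1/(qadt)$ yields $q\,ad\,\tilde t=1/t$, hence $\tilde{\tilde t}=t$. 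Together with Lemma \ref{lem:KSdualdomain} this shows the assignment is an involution wherever it is defined.

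For the self-duality $V\to V$, I would first record the elementary identities coming straight out of \eqref{eq:KSdual}, namely $\tilde a^2=abcd/q$ together with $\tilde a\tilde b=ab$, $\tilde a\tilde c=ac$, $\tilde a\tilde d=ad$, and then deduce $\tilde b\tilde d=(ab/\tilde a)(ad/\tilde a)=a^2bd/\tilde a^2=aq/c$ and likewise $\tilde c\tilde d=aq/b$. With these in hand each defining inequality of $V$ (Definition \ref{def:AWsetV}) transcribes into an original one: $\tilde a\tilde b=ab<1$ and $\tilde a\tilde c=ac<1$ reproduce $ab,ac<1$; $\tilde b\tilde d=aq/c\geq q$ and $\tilde c\tilde d=aq/b\geq q$ are precisely $c\leq a$ and $b\leq a$; and $\tilde b\leq\tilde a$, $\tilde c\leq\tilde a$ reduce via $\tilde a^2=abcd/q$ to $cd\geq q$ and $bd\geq q$ respectively. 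Positivity of $\tilde a,\tilde b,\tilde c$ is clear since $a,b,c,d>0$ on $V$, and $\tilde t=1/(qadt)<0$ follows from $t<0$ together with $a>0$ (from $0<b\leq a$) and $d>qa>0$ (from $a<d/q$).

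The one inequality that is not an immediate transcription, and the step I would treat most carefully, is $\tilde a<\tilde d/q$. Writing $\tilde d=ad/\tilde a$, this is equivalent to $\tilde a^2<ad/q$, i.e.\ to $bc<1$, which is \emph{not} among the listed defining conditions of $V$. The point is that $bc<1$ must be deduced from the domain: from $0<b\leq a$ and $ab<1$ one gets $b^2\leq ab<1$, hence $b<1$, and symmetrically $c<1$, so $bc<1$, exactly the observation recorded in the remark following Definition \ref{def:AWsetV}. Assembling all the transcriptions then gives $(\tilde a,\tilde b,\tilde c,\tilde d,\tilde t)\in V$, which completes the proof.
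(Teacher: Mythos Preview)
Your proof is correct and follows exactly the ``direct verification'' that the paper invokes without detail; you have simply spelled out each inequality and the involution check on $t$, including the observation that $\tilde a<\tilde d/q$ rests on the derived fact $bc<1$. There is nothing to add.
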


\begin{proof}
Again by direct verification.
\end{proof}

From now on we consider $(a,b,c,d,t)\in V$ fixed. 

In order to motivate the measure which we will introduce we look at other 
solutions for the eigenvalue equation for $L$. 
Observe that the eigenvalue equation \eqref{eq:KSeigenvalueequation}
is asymptotically of the form
\begin{equation}\label{asymptequation}
\tilde{a}^2\bigl(f(qx)-f(x)\bigr)+\bigl(f(q^{-1}x)-f(x)\bigr)=\mu(\gamma)f(x)
\end{equation}
when $|x|\rightarrow \infty$.
For generic $\gamma$, 
the asymptotic eigenvalue equation \eqref{asymptequation} has
a basis $\{\Phi_{\gamma}^{\free}, \Phi_{\gamma^{-1}}^{\free}\}$ of solutions
on the $q$-line $I =\{ dtq^k\}_{k\in\Z}$, where
\[
\Phi_{\ga}^{\free}(dtq^k)=\bigl(\tilde{a}\gamma)^{-k},\qquad
k\in\Z.
\]
Furthermore, for generic $\ga$ there exists a unique solution 
$\Phi_{\ga}(x)$
of the eigenvalue equation \eqref{eq:KSeigenvalueequation} on $I$
of the form $\Phi_{\gamma}(x)=\Phi_{\gamma}^{\free}(x)g(x)$,
where $g$ has a convergent power series expansion 
around $\infty$ with constant
coefficient equal to one. The solution $\Phi_{\gamma}$ is the 
\emph{asymptotically
free solution}\index{asymptotically free solution}
of the eigenvalue equation \eqref{eq:KSeigenvalueequation}.

Actually, an explicit expression  for 
$\Phi_{\gamma}$ can be obtained from the study of Ismail and Rahman on
the associated Askey-Wilson polynomials, in which they study solutions 
of the eigenvalue equation  \eqref{eq:KSeigenvalueequation}.
Starting with \cite[(1.13)]{IsmaR-TAMS} and applying the transformation
formula \cite[(III.23)]{GaspR} for very well poised ${}_8\varphi_7$'s we obtain 
\begin{equation}
\begin{split}
\Phi_{\gamma}(x)=
&\frac{\bigl(qa\gamma/\tilde{a}x,qb\gamma/\tilde{a}x,qc\gamma/\tilde{a}x,
q\tilde{a}\gamma/dx,d/x;q\bigr)_{\infty}}
{\bigl(q/ax,q/bx,q/cx,q/dx,q^2\gamma^2/dx;q\bigr)_{\infty}}\\
&\times{}_8W_7\bigl(q\gamma^2/dx;q\gamma/\tilde{a},q\gamma/\tilde{d},
\tilde{b}\gamma,\tilde{c}\gamma,q/dx;q,d/x\bigr)\Phi_{\gamma}^{free}(x)
\end{split}
\end{equation}
for $x\in I$ with $|x|\gg 0$.
We now expand the Askey-Wilson function $\phi_{\ga}(x)$
as a linear combination of the asymptotically free solutions 
$\Phi_{\ga}(x)$ and $\Phi_{\ga^{-1}}(x)$ for $x\in I$ with
$|x|\gg 0$. Since these are all solutions to the same eigenvalue equation,
we can expect a relation with coefficients being constants or $q$-periodic functions.

\begin{prop}\label{prop:KScprop}
Let $x\in I$ with $|x|\gg 0$. Then we have the $c$-function expansion
\begin{equation*} 
\phi_{\gamma}(x)=\widetilde{c}(\gamma)\Phi_{\gamma}(x)+
\widetilde{c}(\gamma^{-1})
\Phi_{\gamma^{-1}}(x)
\end{equation*}
for generic $\ga$, where the $c$-function is given by\index{c@$c$-function}
\begin{equation*} 
c(\ga)= c(\ga;a;b,c;d;q;t) = \frac{1}{\bigl(ab,ac,bc,qa/d;q\bigr)_{\infty}\theta(qadt)}
\frac{\bigl(a/\gamma,b/\gamma,
c/\gamma;q\bigr)_{\infty}\theta(\gamma/dt)}
{\bigl(q\gamma/d,1/\gamma^2;q\bigr)_{\infty}}.
\end{equation*}
using the notation \eqref{eq:thetafunction} 
and $\widetilde{c}(\ga)=c(\ga;\tilde{a};\tilde{b},\tilde{c}; \tilde{d};q;\tilde{t})$. 
\end{prop}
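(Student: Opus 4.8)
The plan is to use that $\phi_{\gamma}$, $\Phi_{\gamma}$ and $\Phi_{\gamma^{-1}}$ all solve the same second-order eigenvalue equation \eqref{eq:KSeigenvalueequation} with eigenvalue $\mu(\gamma)=\mu(\gamma^{-1})$, and then to pin down the two connection coefficients using the known three-term transformation for very-well-poised ${}_8\varphi_7$-series. First I would restrict to the $q$-line $I=\{dtq^k\}_{k\in\Z}$ with $|x|\gg 0$, on which the eigenvalue equation \eqref{eq:KSeigenvalueequation} is a three-term recurrence in $k$, so its solution space is two-dimensional over $\C$. The solutions $\Phi_{\gamma}$ and $\Phi_{\gamma^{-1}}$ have leading asymptotics governed by $(\tilde a\gamma)^{-k}$ and $(\tilde a/\gamma)^{-k}$ respectively (from $\Phi_{\gamma}^{\free}$), which differ for generic $\gamma$; hence their Casorati determinant, constant in $k$ by the analogue of Lemma \ref{lem:WronskianLonl2Z}, is nonzero and they form a basis. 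Consequently there are constants $A(\gamma),B(\gamma)$, independent of $k$, with $\phi_{\gamma}=A(\gamma)\Phi_{\gamma}+B(\gamma)\Phi_{\gamma^{-1}}$ on $I$.

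Next I would exploit the symmetry $\phi_{\gamma^{-1}}(x)=\phi_{\gamma}(x)$ supplied by Lemma \ref{lem:KSreductionAWfunction}. Replacing $\gamma$ by $\gamma^{-1}$ in the expansion gives $\phi_{\gamma^{-1}}=A(\gamma^{-1})\Phi_{\gamma^{-1}}+B(\gamma^{-1})\Phi_{\gamma}$, and comparing with the original expansion, linear independence of $\Phi_{\gamma^{\pm1}}$ forces $B(\gamma)=A(\gamma^{-1})$. Thus $\phi_{\gamma}=A(\gamma)\Phi_{\gamma}+A(\gamma^{-1})\Phi_{\gamma^{-1}}$, and the whole statement reduces to identifying the single coefficient $A(\gamma)=\widetilde c(\gamma)$.

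Finally, to compute $A(\gamma)$ explicitly, I would feed the ${}_8W_7$ representation of the Askey-Wilson function in \eqref{eq:KSphi} together with the ${}_8W_7$ representation of the asymptotically free solution $\Phi_{\gamma}$ displayed just before the proposition into the three-term transformation relating three very-well-poised ${}_8\varphi_7$-series (see \cite[\S2.11]{GaspR}). After matching the parameters against $(a,b,c,d,\gamma)$, that transformation expresses the ${}_8W_7$ attached to $\phi_{\gamma}$ as a sum of the two ${}_8W_7$'s attached to $\Phi_{\gamma}$ and $\Phi_{\gamma^{-1}}$, with connection coefficients that are explicit quotients of $q$-shifted factorials and theta functions; reducing these with \eqref{eq:thetafunction} yields exactly $\widetilde c(\gamma)$ in the stated form. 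I expect this last step to be the main obstacle: the delicate bookkeeping required to align the parameters in the ${}_8\varphi_7$ transformation and, above all, to absorb the prefactors of \eqref{eq:KSphi} and of the $\Phi_{\gamma}$-formula into the theta-quotient so that it collapses to the compact closed form for $c(\gamma)$. The functional-analytic part (two-dimensionality, linear independence, and the symmetry reduction) is routine once the nonvanishing of the Casorati determinant is recorded.
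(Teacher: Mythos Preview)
Your proposal is correct and uses the same core ingredient as the paper: Bailey's three-term transformation for very-well-poised ${}_8W_7$-series (the paper cites it as \cite[(III.37)]{GaspR}, with the specific specialization $a\to q\gamma^2/dx$, $b\to q/dx$, $c\to q\gamma/\tilde a$, $d\to q\gamma/\tilde d$, $e\to \tilde b\gamma$, $f\to \tilde c\gamma$).

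The organizational difference is worth noting. You first invoke recurrence theory on the $q$-line $I$ to guarantee \emph{a priori} that the connection coefficients are constants in $k$, and then use the $\gamma\leftrightarrow\gamma^{-1}$ symmetry of $\phi_\gamma$ to reduce to a single unknown $A(\gamma)$. The paper is more direct: it applies Bailey's relation immediately and obtains explicit coefficients that \emph{prima facie} still depend on $x\in I$; the work then goes into showing, via the quasi-periodicity \eqref{eq:thetafunction} of $\theta$, that these apparently $x$-dependent expressions are in fact constant along $I=\{dtq^k\}_{k\in\Z}$ and collapse to the stated $\widetilde c(\gamma^{\pm1})$. Your recurrence preamble guarantees that this collapse must happen, but does not let you avoid the theta-function bookkeeping you correctly flagged as the main obstacle; conversely, once you carry out the transformation you will read off both coefficients at once, so the symmetry reduction $B(\gamma)=A(\gamma^{-1})$ becomes a consistency check rather than a genuine shortcut.
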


We call $\widetilde{c}(\gamma)=
c(\gamma;\tilde{a};\tilde{b},\tilde{c};\tilde{d};q;\tilde{t})$
the dual $c$-function, with the dual parameter $\tilde{t}$ defined by
\eqref{eq:KSdualt}.\index{dual $c$-function}

\begin{proof} The proof requires some calculation. The essential ingredients
are as follows. 
First apply Bailey's three term recurrence relation \cite[(III.37)]{GaspR}
with its parameters specialized as 
\[
a\rightarrow q\gamma^2/dx,\quad
b\rightarrow q/dx,\quad 
c\rightarrow q\gamma/\tilde{a},\quad 
d\rightarrow q\gamma/\tilde{d},\quad
e\rightarrow \tilde{b}\gamma,\quad 
f\rightarrow \tilde{c}\gamma. 
\]
This gives an expansion of the required form with explicit
coefficients $\widetilde{c}(\gamma)$ and $\widetilde{c}(\gamma^{-1})$,
which at a first glance still depend on $x\in I$. 
Using the theta function \eqref{eq:thetafunction} and its functional 
equation \eqref{eq:thetafunction} we see that the coefficients 
are independent of $x$.
\end{proof}

For the moment we furthermore 
assume that 
$x\mapsto 1/c(x)c(x^{-1})$ only has simple poles. 
This imposes certain generic conditions on the parameters $(a,b,c,d,t)$, 
which can be removed at a later stage by a continuity argument.

It is convenient to renormalize the
function $1/c(x)c(x^{-1})$ as follows,
\begin{equation}\label{eq:KSWeight}
W(x)=\frac{1}{c(x)c(x^{-1})c_0}
=\frac{\bigl(qx/d,q/dx,x^2,1/x^{2};q\bigr)_{\infty}}
{\bigl(ax,a/x,bx,b/x,cx,c/x;q\bigr)_{\infty}\theta(dtx)\theta(dt/x)},
\end{equation}
where $c_0$ is the positive constant
\begin{equation*} 
c_0=\frac{\bigl(ab,ac,bc,qa/d;q\bigr)_{\infty}^2\theta(adt)^2}{a^2}.
\end{equation*}
It follows from \eqref{eq:KSWeight} and \eqref{eq:KSDelta} that 
\begin{equation}\label{eq:KSdifference}
W(x)=\frac{\theta(dx)\theta(d/x)}{\theta(dtx)\theta(dt/x)}\Delta(x).
\end{equation}
By \eqref{eq:thetafunction}, the quotient
of theta functions in \eqref{eq:KSdifference} is a $q$-periodic function. 
In particular,  
the weight function $W(x)$ differs from $\Delta(x)$ 
only by a $q$-periodic function, but this factor introduces additional poles which
arise in the orthogonality (or spectral) measure.

Let $S$ be the discrete subset  
\begin{equation}\label{eq:KSS}
\begin{split}
S&=\{x\in {\mathbb{C}} \,\, | \,\, 
|x|>1, c(x)=0\}=S_+\cup S_-,\\
S_+&=\{aq^k \,\, | \,\, k\in\N,\,\, aq^k>1\},\\
S_-&=\{dtq^k \,\, | \,\, k\in \Z,\,\, dtq^k<-1\}.
\end{split}
\end{equation} 
By $\widetilde{S}$ and $\widetilde{S}_{\pm}$ we denote the subsets $S$ 
and $S_{\pm}$ with respect to dual parameters.
We define a measure $\nu=\nu(\cdot;a;b,c;d;t;q)$ by
\begin{equation}\label{eq:KSnu}
\begin{split}
\int f(x)&d\nu(x)=\frac{K}{4\pi i}\int_{x\in {\mathbb{T}}}f(x)
W(x)\frac{dx}{x}\\
&+\frac{K}{2}\sum_{x\in S}f(x)\underset{y=x}{\hbox{Res}}
\left(\frac{W(y)}{y}\right)
-\frac{K}{2}\sum_{x\in S^{-1}}f(x)
\underset{y=x}{\hbox{Res}}
\left(\frac{W(y)}{y}\right),
\end{split}
\end{equation}
where the positive constant
$K$ is given by 
\begin{equation}\label{eq:KSK}
K=\bigl(ab,ac,bc,qa/d,q;q\bigr)_{\infty}\sqrt{\frac{\theta(qt)\theta(adt)
\theta(bdt)\theta(cdt)}{qabcdt^2}}.
\end{equation}
This particular choice of normalization constant for the measure $\nu$
is justified in Theorem \ref{thm:KSmain}, since the corresponding transform
is made an isometry.

In view of \eqref{eq:KSdifference}, we can relate the 
discrete masses $\nu\bigl(\{x\}\bigr)(=-\nu\bigl(\{x^{-1}\}\bigr))$ 
for $x\in S_+$ to residues of the weight function $\Delta(\cdot)$,
which were written down explicitly in \cite{AskeW}, see also \cite[(7.5.22)]{GaspR}
in order to avoid a small misprint in \cite{AskeW}. Explicitly, we obtain 
for $x=aq^k\in S_+$ with $k\in\N$ the expression 
\begin{equation}\label{eq:KSweightplus}
\nu\bigl(\{aq^k\}\bigr)=\frac{\bigl(qa/d,q/ad,1/a^2;q\bigr)_{\infty}}
{\bigl(q,ab,b/a,ac,c/a;q\bigr)_{\infty}\theta(adt)\theta(dt/a)}
\frac{(1-a^2q^{2k})}{(1-a^2)}\frac{K}{2\tilde{a}^{2k}}
\end{equation}
for the corresponding discrete weight. 
For fixed $k\in\N$, the right hand side of \eqref{eq:KSweightplus}
gives the unique continuous extension of
the discrete weight $\nu\bigl(\{aq^k\}\bigr)$ and 
$-\nu\bigl(\{a^{-1}q^{-k}\}\bigr)$ to all parameters 
$(a,b,c,d,t)\in V$ satisfying $aq^k>1$. Furthermore, 
the (continuously extended) 
discrete weight $\nu\bigl(\{aq^k\}\bigr)$ is strictly positive
for these parameter values. Note that $S_+$  gives a finite number of discrete mass
points in the measure $\nu$. 

A similar argument can be applied for the discrete weights
$\nu\bigl(\{x\}\bigr)(=-\nu\bigl(\{x^{-1}\}\bigr))$ with $x\in S_-$. 
Explicitly we obtain for $x=dtq^k\in S_-$
with $k\in\Z$,
\begin{equation}\label{eq:KSweighttoinfty}
\begin{split}
\nu\bigl(\{dtq^k\}\bigr)=&\frac{\bigl(qt,q/d^2t;q\bigr)_{\infty}}
{\bigl(q,q,a/dt,b/dt,c/dt,adt,bdt,cdt;q\bigr)_{\infty}}
\\
&\times
\frac{\bigl(1/t,a/dt,b/dt,c/dt;q\bigr)_{-k}}
{\bigl(q/adt,q/bdt,q/cdt,q/d^2t;q\bigr)_{-k}}
\left(1-\frac{1}{d^2t^2q^{2k}}\right)\frac{K\tilde{a}^{2k}}{2}.
\end{split}
\end{equation}
As for $\nu\bigl(\{x\}\bigr)$ with $x\in S_+$, we use the 
right hand side of \eqref{eq:KSweighttoinfty} to define 
the strictly positive weight
$\nu\bigl(\{dtq^k\}\bigr)(=-\nu\bigl(\{d^{-1}t^{-1}q^{-k}\}\bigr))$ 
for all $(a,b,c,d,t)\in V$ satisfying  $dtq^k<-1$. 
Note that $S_-$  gives an infinite number of discrete mass
points in the measure $\nu$.

We see that the definition \eqref{eq:KSnu} of the
measure $\nu$  can be extended to arbitrary parameters 
$(a,b,c,d,t)\in V$ using the continuous extensions of its 
discrete weights in \eqref{eq:KSweightplus}, \eqref{eq:KSweighttoinfty}. 
The resulting measure $\nu$ is a 
positive measure for all $(a,b,c,d,t)\in V$.

\begin{defn}\label{def:KSdefhilbert}
Let $\cH=\cH(a;b,c;d;t;q)$ be the Hilbert space consisting of $L^2$-functions
$f$ with respect to $\nu$ which
satisfy $f(x)=f(x^{-1})$ $\nu$-almost everywhere.
\end{defn}

We write $\widetilde{\nu}$ for the measure $\nu$ with respect to dual 
parameters $(\tilde{a},\tilde{b},\tilde{c},\tilde{d},\tilde{t})$,
and $\widetilde{\cH}$ for the associated Hilbert space 
${\cH}$. 

Let ${\cD}\subset \cH$
be the dense subspace of functions $f$ with compact support, i.e.
\[ 
{\cD}=\{f\in \cH \,\, | \,\, f(dtq^{-k})=0,\ k\gg 0\},
\]
and define
\begin{equation}\label{eq:KSF}
\bigl({\cF}f\bigr)(\ga)=\int f(x){\overline{\phi_{\gamma}(x)}}
d\nu(x),\qquad f\in \cD
\end{equation}
for generic $\ga\in \C\setminus \{0\}$. 

We write ${\widetilde{\mathcal{D}}}\subset \widetilde{{\mathcal{H}}}$ 
(respectively $\widetilde{\mathcal{F}}$) for the dense subspace ${\mathcal{D}}$
(respectively the function transform $\mathcal{F}$)
with respect to dual parameters 
$(\tilde{a},\tilde{b},\tilde{c},\tilde{d},\tilde{t})$.

\begin{thm}\label{thm:KSmain}
Let $(a,b,c,d,t)\in V$.
The transform ${\cF}$ extends to an isometric isomorphism
${\cF} \colon {\cH}\to \widetilde{\cH}$
by continuity. The inverse of ${\cF}$ is given by 
${\widetilde{\cF}} \colon \widetilde{{\cH}} \to {\cH}$.
\end{thm}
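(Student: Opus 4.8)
The plan is to realise the transform $\cF$ as the spectral decomposition of the Askey--Wilson operator $L$ of \eqref{eq:KSLAW}, following the doubly infinite Jacobi operator method of \S\ref{sec:BHS-qdiff-nonpol}. Since $\phi_\ga$ is an eigenfunction of $L$ with eigenvalue $\mu(\ga)$, see \eqref{eq:KSeigenvalueequation}, the transform $(\cF f)(\ga)=\int f(x)\overline{\phi_\ga(x)}\,d\nu(x)$ formally intertwines $L$, acting in the variable $x$, with multiplication by $\mu(\ga)$; consequently, once $L$ is realised as a self-adjoint operator on $\cH$, proving that $\cF$ is isometric is exactly the Plancherel identity for its spectral decomposition, and the relevant target measure is the spectral measure of $L$. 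The first task is therefore to fix the self-adjoint realisation: restricted to the $q$-line $I=\{dtq^k\}_{k\in\Z}$ the eigenvalue equation is a three-term recurrence, so the framework of \S\ref{sec:BHS-qdiff-nonpol}, in particular the Masson--Repka Theorem \ref{thm:MassonRepka} and the deficiency-index bookkeeping, applies after checking that the constraints defining $V$ place us in the essentially self-adjoint regime. The novelty compared with \S\ref{sec:BHS-qdiff-nonpol} is that the continuous part of $\nu$ supported on $\T$ must be accommodated as well.

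Next I would carry out the explicit spectral computation in parallel with the proof of Theorem \ref{thm:orthorel2phi1}. For $z=\mu(\ga)$ off the spectrum the resolvent $(L-z)^{-1}$ is represented by a Green kernel assembled from the asymptotically free solution $\Phi_\ga$, admissible towards $\infty$, and a companion solution admissible at the other end, the Wronskian in the denominator being furnished by the $c$-function expansion $\phi_\ga=\widetilde c(\ga)\Phi_\ga+\widetilde c(\ga^{-1})\Phi_{\ga^{-1}}$ of Proposition \ref{prop:KScprop}. Substituting into the Stieltjes--Perron inversion formula of Theorem \ref{thmspectralthm} and letting $z$ descend to the real axis splits the spectral measure into a continuous part, whose density is proportional to $1/\bigl(\widetilde c(\ga)\widetilde c(\ga^{-1})\bigr)$ and hence, after the renormalisation of \eqref{eq:KSWeight}, equals the continuous weight of $\widetilde\nu$, together with a discrete part supported at the zeros of $\ga\mapsto\widetilde c(\ga^{-1})$, that is at the points of $\widetilde S_+$ and $\widetilde S_-$. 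Evaluating the residues and comparing with the dual forms of \eqref{eq:KSweightplus} and \eqref{eq:KSweighttoinfty} identifies these with the discrete masses of $\widetilde\nu$, the constant $K$ of \eqref{eq:KSK} being precisely what makes the Plancherel identity an isometry; as in Theorem \ref{thm:orthorel2phi1} one checks separately that $\ga=\pm1$ carries no point mass.

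For surjectivity and the identification of the inverse I would invoke the bispectral self-duality. By Lemma \ref{lem:KSreductionAWfunction} the kernel is self-dual, $\phi_\ga(x)=\phi_x(\ga)$ under the interchange of $(a,b,c,d,t)$ with $(\tilde a,\tilde b,\tilde c,\tilde d,\tilde t)$, and by Lemma \ref{lem:KS2dualdomain} the domain $V$ is stable under this involution. Hence the dual transform $\widetilde\cF\colon\widetilde\cH\to\cH$ has the conjugate kernel with the roles of $x$ and $\ga$ exchanged, and the isometry statement of the previous paragraph, applied verbatim in the dual domain, shows $\widetilde\cF$ to be isometric as well. The two transforms are formal adjoints with respect to the inner products of $\cH$ and $\widetilde\cH$, so the two Plancherel identities combine to give $\widetilde\cF\circ\cF=\Id$ on $\cD$ and $\cF\circ\widetilde\cF=\Id$ on $\widetilde\cD$; extending by continuity from these dense subspaces yields that $\cF$ is an isometric isomorphism with inverse $\widetilde\cF$.

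The main obstacle is the spectral computation of the second paragraph. One must locate exactly the poles of $1/\bigl(\widetilde c(\ga)\widetilde c(\ga^{-1})\bigr)$, compute every residue in closed form, verify the positivity and the precise value of each discrete weight, and track the overall normalisation so that no spurious constant remains; this is a delicate residue calculus resting on the explicit product formula for the $c$-function, the functional equation of the theta function \eqref{eq:thetafunction}, and sharp asymptotics of $\Phi_\ga$ and of the weights needed both to justify the interchange of limits in the Stieltjes--Perron formula and to exclude point mass at $\ga=\pm1$. Accommodating the continuous component of $\nu$ on $\T$ within the self-adjointness analysis of \S\ref{sec:BHS-qdiff-nonpol} is the second delicate point.
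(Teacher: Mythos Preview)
The paper does not prove this theorem: immediately after the statement it says ``We will not prove Theorem \ref{thm:KSmain} in these notes, but we refer to the original proof in \cite{KoelS-IMRN2001}.'' So there is no proof in the paper to compare against.

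That said, your outline is in the spirit of the method advertised throughout the notes, and the original argument in \cite{KoelS-IMRN2001} does indeed proceed via a spectral analysis of $L$ together with the $c$-function expansion of Proposition~\ref{prop:KScprop} and the self-duality of Lemma~\ref{lem:KSreductionAWfunction}. Your own diagnosis of the obstacles is accurate, and the second one is the serious one: the Hilbert space $\cH$ here is \emph{not} a weighted $\ell^2(\Z)$, because $\nu$ has an absolutely continuous part on $\T$ in addition to the infinite discrete set $S_-$ and the finite set $S_+$. Consequently $L$ is not a doubly infinite Jacobi operator in the sense of \S\ref{ssec:doublyinfiniteJacobioperators}, and neither Theorem~\ref{thm:MassonRepka} nor the Green-kernel machinery of Proposition~\ref{prop:GreenkerLonl2Z} applies directly. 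In \cite{KoelS-IMRN2001} this is handled by a more delicate analysis tailored to the mixed continuous/discrete structure of $\nu$; your sketch would need a genuine extension of the \S\ref{sec:BHS-qdiff-nonpol} framework, not just a routine adaptation, before the Stieltjes--Perron step can be carried out.
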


The isometric isomorphism $\cF\colon {\cH}\rightarrow
\widetilde{\cH}$ is called the \emph{Askey-Wilson function transform}.
\index{Askey-Wilson function transform}

We will not prove Theorem \ref{thm:KSmain} in these notes, but we 
refer to the original proof in \cite{KoelS-IMRN2001}.


\subsection{Exercises}

\begin{enumerate}[1.]
\item\label{ex:AW1} Give a factorisation for the second order $q$-difference
operator \eqref{eq:KSdifference} in terms of a lowering and raising operator. 
\end{enumerate}


\subsection*{Notes}
The Askey-Wilson polynomials have been introduced by Askey and Wilson
in \cite{AskeW}, and these polynomials are on top of the continuous 
part of the $q$-analogue of the Askey scheme, see \cite{AskeW}, 
\cite{KoekLS}, \cite{KoekS}. The discrete counterpart, the $q$-Racah 
polynomials are on top of the discrete part of 
the $q$-analogue of the Askey scheme. The solutions of the 
second order $q$-difference equation 
as considered here were studied by Ismail and Rahman \cite{IsmaR-TAMS},
where they studied the associated Askey-Wilson polynomials.
The corresponding Askey-Wilson function transform as in Theorem 
\ref{thm:KSmain} is due to \cite{KoelS-IMRN2001}. 
It is remarkable that the $q=1$ analogue of this transform, 
the Wilson function transform of Groenevelt \cite{Groe-WT}, is 
only established after the $q$-analogue of the Askey-Wilson function transform.
Moreover, Groenevelt's Wilson transform comes into two versions, which 
also map Wilson polynomials to Wilson polynomials (with dual parameters).
The $q$-analogue of this statement is related to the expansion of the 
Askey-Wilson function in terms of a series involving a product of 
the Askey-Wilson polynomials and the Askey-Wilson polynomials with 
dual parameters, see Stokman \cite{Stok-JAT}. 
Other approaches to the same or closely related functions can be found in e.g.
Haine and Iliev \cite{HainI}, Ruijsenaars \cite{Ruij-AW} and Suslov \cite{Susl}. 
As to multivariable extensions, Stokman \cite{Stok-AoM} established 
the analogue of Proposition \ref{prop:KScprop}. 
For Exercise \ref{ex:AW1} one can e.g. consult \cite{GaspR}, \cite{KoekS}. 
There seems not to be an appropriate analogue of the Frobenius method for the 
Askey-Wilson difference equation, see \cite{IsmaS} for related Taylor  
expansions. 


\section{Matrix-valued extensions}\label{sec:MVextensions}

The hypergeometric differential equation has a 
matrix-valued analogue, studied by Tirao \cite{Tira}. 
This matrix-valued differential equation plays an 
important role in the study of matrix-valued 
spherical functions on (especially compact) symmetric spaces 
and extensions of these result to more general 
matrix-valued orthogonal polynmials,
see e.g. \cite{GrunPT}
\cite{KoelvPR1}, \cite{KoelvPR2}, \cite{KoeldlRR}.
See also \cite{DamaPS}  
and the lecture notes \cite{Koel-OPSFA2016}, as well as references
given there, 
for more information on matrix-valued orthogonal polynomials. 
This section is of a preliminary nature and based
on the Bachelor thesis of Nikki Jaspers \cite{Jasp} 
and finds its origin in the paper \cite{AldeKR}. 
The results can be considered as $q$-analogues of the solutions of 
the matrix-valued hypergeometric series at $0$ and $\infty$, see 
\cite{Tira} and \cite{RomaS}. 

\subsection{Vector-valued basic hypergeometric $q$-difference equation}

Recall the basic hypergeometric equation \eqref{eq:ex1.13a}, which we now
adapt to 
\begin{equation}\label{eq:vectorvBHqDE}
(q-z)\Id f(q^{-1}z)+\bigl((A+B)z-C-q\Id\bigr)f(z)+(C-ABz)f(qz) = 0
\end{equation}
where $A,B,C\in \End(\C^N)$,\index{End@$\End(\C^N)$} i.e. linear maps from $\C^N$ to $\C^N$, 
and  $f\colon \C\to \C^N$ is the unknown vector-valued function,
which we want to satisfy \eqref{eq:vectorvBHqDE}.
Note that in particular in \eqref{eq:vectorvBHqDE} the identity 
$\Id\in \End(\C^N)$ and $0\in \C^N$. 

Note that $N=1$ brings us back to \eqref{eq:ex1.13a} and 
the results presented in these lecture notes, but also the case
of commuting diagonalizable $A$, $B$ and $C$ bring us back 
to \eqref{eq:ex1.13a}. 

Generically the dimension of the solution space of 
the vector-valued basic $q$-difference equation \eqref{eq:vectorvBHqDE}
is $2N$.

\begin{remark}\label{rmk:eq:vectorvBHqDE} More generally, we 
can consider \eqref{eq:vectorvBHqDE} with $A+B$ and $AB$ replaced 
by more generally $U$ and $V$. In the case $N=1$ this is 
equivalent, but in the vector-valued case this more general.
In the case of the hypergeometric differential operator
this is dicussed by Tirao \cite{Tira}. We will
not discuss this case, see \cite{Jasp} for the
$q$-case. 
\end{remark}

Note that we can generalize \eqref{eq:vectorvBHqDE} to 
\begin{equation}\label{eq:matrixvBHqDE}
(q-z)\Id F(q^{-1}z)+\bigl((A+B)z-C-q\Id\bigr)F(z)+(C-ABz)F(qz) = 0
\end{equation}
where $A,B,C\in \End(\C^N)$
and $F\colon \C\to \End(\C^N)$ is the unknown matrix-valued function,
which we want to satisfy \eqref{eq:matrixvBHqDE}.
Note that in \eqref{eq:matrixvBHqDE} now $0\in\End(\C^N)$ is the 
zero-matrix. 
Moreover, if $F(z)$ is any solution to \eqref{eq:matrixvBHqDE},
then $f(z) = F(z)f_0$ for a fixed vector $f_0\in \C^N$ satisfies 
\eqref{eq:vectorvBHqDE}.

\subsection{Solutions of matrix-valued q-hypergeometric equation}
The Frobenius method can be extended to  case of 
\eqref{eq:vectorvBHqDE}. We first consider expansions around $z=0$. 
For a matrix $C\in \End(\C^N)$ we let $\si(C)$ denote its spectrum, i.e. 
the zeros of the characteristic polynomial of $C$. 
For $A,B,C\in \End(\C^N)$ with $q^{-\N}\cap \si(C)=\emptyset$ we define for 
$n\in \N$ the product
\[
\big(A,B;C;q\big)_n = \displaystyle\prod_{k=0}^{\substack{n-1\\ \gets}}(I-q^kC)^{-1}(I-q^kA)(I-q^kB)
\]
where 
\[
 \displaystyle\prod_{k=0}^{\substack{n-1\\ \gets}} a_k = a_{n-1}\cdots a_0
\]
for  non-commuting elements $a_k$. 
In case $A,B,C$ are $1\times 1$-matrices, this reduces to $\frac{(A;q)_n(B(;q)_n}{(C;q)_n}$.

For $\al\in \C$ we define the matrix-valued basic hypergeometric series\index{matrix-valued basic hypergeometric series} 
\begin{equation}\label{eq:MVbasichypseries}
{}_2\Phi_1^\al(A,B;C;q,z)= \displaystyle\sum_{n=0}^\infty\big(\al A, \al B; \al C;q\big)_n
\frac{z^n}{(\al q;q)_n}
\end{equation}
assuming $q^{-\N}\cap \si(\al C)=\emptyset$.  In case $\al=1$, we drop it from the notation, i.e.
${}_2\Phi_1(A,B;C;q,z)={}_2\Phi_1^1(A,B;C;q,z)$. 

Note that obvious symmetry $A\leftrightarrow B$ of the scalar case no longer holds, since
$AB\not= BA$ in general.

We can now describe the solutions to \eqref{eq:vectorvBHqDE} in the generic case, i.e. when the 
eigenvalues of $C$ are sufficiently generic. 

\begin{thm}\label{thm:MVBHSatzero}
Assume $C$ is diagonalizable, so that $\si(C)=\{c_1,\cdots ,c_N\}$ with $c_i\not=c_j$ for $i\not=j$. 
Assume $c_i\not=0$ for all $i$. 
Let $f_i\not=0$ be the corresponding eigenvectors of $C$; $Cf_i=c_i f_i$, $1\leq i \leq N$. 
Assume furthermore that $\si(C)\cap q^\Z=\emptyset$ and that $c_i/c_j \notin q^\Z$ for  $i\not= j$.
Then the vector-valued functions 
\[
{}_2\Phi_1(A,B;C;q,z) e_i
\]
for any basis $\{e_1,\cdots,e_N\}$ of $\C^N$ and the vector-valued functions
\[
z^{1-\log_q(c_i)}\, {}_2\Phi_1^{q/c_i}(A,B;C;q,z)f_i, \qquad 1\leq i \leq N
\]
span the solution space of \eqref{eq:vectorvBHqDE} (over the $q$-periodic functions).
\end{thm}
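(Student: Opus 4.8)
The plan is to carry out the Frobenius method of Proposition \ref{prop:solofBqDEat0atinfty} in the matrix-valued setting. First I would substitute the Ansatz $f(z)=\sum_{n=0}^\infty a_n z^{n+\mu}$ with $a_n\in\C^N$, $a_0\neq 0$, $\mu\in\C$, into \eqref{eq:vectorvBHqDE} and collect the coefficient of $z^{n+\mu}$. Writing
\[
M(\nu)=q(q^{-\nu}-1)\Id+(q^\nu-1)C=(q^\nu-1)\bigl(C-q^{1-\nu}\Id\bigr),
\]
I expect the $n=0$ term to give the indicial equation $M(\mu)a_0=0$, and for $n\geq 1$ the three-term relation to collapse to the two-term recurrence $M(\mu+n)a_n=N(\mu+n)a_{n-1}$, where $N(\nu)=q^{\nu-1}(A-q^{1-\nu}\Id)(B-q^{1-\nu}\Id)$ is the right-hand matrix. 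Since $C$ is diagonalizable, $M(\mu)$ is diagonalizable in the same eigenbasis $\{f_i\}$ with eigenvalues $(q^\mu-1)(c_i-q^{1-\mu})$, so $\det M(\mu)=0$ precisely when $q^\mu=1$ or $q^\mu=q/c_i$ for some $i$; this is the matrix analogue of the scalar indicial equation.

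Next I would treat the two roots. For $q^\mu=1$, take $\mu=0$: then $M(0)=0$ and $a_0$ may be any vector, in particular any basis vector $e_i$. For $q^\mu=q/c_i$, take $\mu=1-\log_q(c_i)$: the kernel of $M(\mu)$ is spanned by $f_i$, since the remaining eigenvalues $(q^\mu-1)(c_j-c_i)$ are nonzero for $j\neq i$ (using $c_j\neq c_i$ and $c_i\neq q$). In both cases the hypotheses $\si(C)\cap q^\Z=\emptyset$ and $c_i/c_j\notin q^\Z$ guarantee that $M(\mu+n)$ is invertible for every $n\geq 1$, so the recurrence determines all $a_n$ uniquely. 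The crux is then a bookkeeping identity: factoring $M(\mu+n)$ and $N(\mu+n)$ as above, cancelling the scalar $q$-powers, and setting $\al=q^\mu$, I expect
\[
M(\mu+n)^{-1}N(\mu+n)=\frac{(\Id-\al q^{n-1}C)^{-1}(\Id-\al q^{n-1}A)(\Id-\al q^{n-1}B)}{1-\al q^{n}},
\]
which is exactly the $(n-1)$-st factor of the ordered product $\big(\al A,\al B;\al C;q\big)_n$ together with the ratio $(\al q;q)_n/(\al q;q)_{n-1}$. Comparing with \eqref{eq:MVbasichypseries} then identifies the $\mu=0$ solutions with ${}_2\Phi_1(A,B;C;q,z)e_i$ and the $q^\mu=q/c_i$ solutions with $z^{1-\log_q(c_i)}\,{}_2\Phi_1^{q/c_i}(A,B;C;q,z)f_i$.

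To finish I would invoke that the solution space of \eqref{eq:vectorvBHqDE} over the $q$-periodic functions is $2N$-dimensional, so it suffices to prove the $2N$ exhibited solutions are linearly independent over that field. I would restrict a hypothetical relation $\sum_i P_i(z)\,{}_2\Phi_1(A,B;C;q,z)e_i+\sum_i Q_i(z)\,z^{1-\log_q(c_i)}\,{}_2\Phi_1^{q/c_i}(A,B;C;q,z)f_i=0$ to a single $q$-orbit $z_0q^\Z$, on which every $q$-periodic function is a constant. Expanding in $\zeta=q^k$, the two families contribute modes with exponent sets $\N$ and $(1-\log_q(c_i))+\N$; the hypotheses $c_i\notin q^\Z$ and $c_i/c_j\notin q^\Z$ make these sets pairwise disjoint, so matching the coefficient of each mode separately forces $\sum_i P_i(z_0)e_i=0$ and $Q_i(z_0)f_i=0$. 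As $\{e_i\}$ is a basis and each $f_i\neq 0$, all coefficients vanish at the arbitrary point $z_0$, giving independence and hence a spanning set.

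The step I expect to demand the most care is the factorisation identity, precisely because $A$ and $B$ do not commute. The coefficient $C-ABz$ of $f(qz)$ produces $N(\nu)$ in the specific order $(A-q^{1-\nu}\Id)(B-q^{1-\nu}\Id)$, and one must check that after multiplying by $M(\mu+n)^{-1}$ this matches the ordered product in \eqref{eq:MVbasichypseries} with $(\Id-q^kC)^{-1}$ on the left, then the $A$-factor, then the $B$-factor, rather than the reversed order; this is exactly the point where the loss of the $A\leftrightarrow B$ symmetry noted after \eqref{eq:MVbasichypseries} becomes essential. The remaining verifications (invertibility of $M(\mu+n)$, disjointness of exponent classes) are routine consequences of the genericity hypotheses and would be relegated to short calculations.
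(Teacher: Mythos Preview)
Your proposal is correct and follows essentially the same approach as the paper's proof: both apply the Frobenius method to the Ansatz $\sum_n a_n z^{n+\mu}$, obtain the matrix indicial equation $(q^\mu-1)(C-q^{1-\mu}\Id)a_0=0$, and then solve the resulting two-term recurrence to identify the coefficients with the ordered product $(\al A,\al B;\al C;q)_n/(\al q;q)_n$. Your treatment is slightly more explicit in packaging the recurrence via $M(\nu)$ and $N(\nu)$ and in arguing linear independence over $q$-periodic functions by exponent classes, whereas the paper simply notes that the analytic solutions have linearly independent values at $z=0$ and the remaining solutions have distinct leading behaviour there; these are the same argument at different levels of detail.
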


\begin{proof} The proof follows the Frobenius method for the 
basic hypergeometric $q$-difference equation. So we assume that we have a 
solution of the form $\sum_{n=0}^\infty f_n z^{n+\mu}$, $f_n\in \C^N$, where 
$f_0\not=0$. Plugging this Ansatz into \eqref{eq:vectorvBHqDE} we get 
\begin{gather*}
0= (q-z)\Id \sum_{n=0}^\infty f_n(q^{-1}z)^{n+\mu} +\bigl((A+B)z-(C+q\Id)\bigr)
\sum_{n=0}^\infty f_n z^{n+\mu} + (C-ABz)\sum_{n=0}^\infty f_n (qz)^{n+\mu}  \\ 
= \bigl( f_0q^{1-\mu} - \bigl(C+q)  f_0  + q^\mu C f_0  \bigr) z^{\mu} 
+ \\
\sum_{n=1}^\infty z^{\mu+n} 
\Bigl( \bigl( Cq^{\mu+n} -(C+q) + q^{1-\mu-n}\bigr)f_n + \bigl( -ABq^{\mu+n-1} 
+ A+B - q^{1-\mu-n}\bigr)f_{n-1} \Bigr)
\end{gather*}
So in particular, the vector-valued coefficient needs to vanish, and this gives
the indicial equation\index{indicial equation}
\[
\bigl( (q^{1-\mu} -q) - (1-q^\mu) C\bigr)  f_0 =(1-q^\mu)(q^{1-\mu} -C)f_0 = 0
\]
This gives $2N$ for $q^\mu$, namely $q^\mu=1$ and $f_0\in \C^N$ arbitrary 
and $Cf_0=q^{1-\mu} f_0$, i.e. $f_0=f_i$ and $q^{1-\mu}=c_i$ for $1\leq i \leq N$.

With each of these solutions we then need to solve recursively 
\begin{gather*}
\bigl( Cq^{\mu+n} -(C+q) + q^{1-\mu-n}\bigr)f_n = \bigl( ABq^{\mu+n-1} 
-(A+B) + q^{1-\mu-n}\bigr)f_{n-1} \quad \Longrightarrow \\
-(1-q^{\mu+n})(C-q^{1-\mu-n})f_n = (Aq^{\mu+n-1}-1)(B-q^{1-n-\mu})f_{n-1}  \quad \Longrightarrow \\
(1- q^{\mu+n-1}C)f_n = \frac{1}{(1-q^{\mu+n})}(1-q^{\mu+n-1}A)(1-q^{n+\mu-1}B)f_{n-1}
\end{gather*}
since $q^{\mu+n}\not= 1$ under the assumptions on $C$ and $q^{1-\mu}\in \si(C)$. 

In case $q^\mu=1$, we find 
\begin{gather*}
f_n = \frac{1}{(1-q^{n})}(1- q^{n-1}C)^{-1}(1-q^{n-1}A)(1-q^{n-1}B)f_{n-1} 
= \frac{1}{(q;q)_n} (A,B;C;q)_n f_0
\end{gather*}
without condition on $f_0$. This gives the first set of solutions by taking $\mu=0$. 
All other solutions of $q^\mu=1$ lead to the same solution up to $q$-periodic functions, cf. the 
proof of Proposition \ref{prop:solofBqDEat0atinfty}. 

In the other case, we have 
$f_0=f_i$ and $q^{1-\mu}=c_i$ for some  $1\leq i \leq N$. Take $\mu=1-\log_q(c_i)$,
so $q^\mu=q/c_i$. Then the recurrence is 
\begin{gather*}
f_n = \frac{1}{(1-q^{1+n}/c_i)}(1- q^{n}c_i^{-1}C)^{-1}(1-q^{n}c_i^{-1}A)(1-q^{n}c_i^{-1}B)f_{n-1} \\ 
= \frac{1}{(q/c_i;q)_n} (qc_i^{-1}A,qc_i^{-1}B;qc_i^{-1}C;q)_n f_i
\end{gather*}
and this gives the other set of solutions. Again, choosing a different solution of $q^\mu=q/c_i$
leads to the same solution up to a $q$-periodic function. 

Since the space of solutions is $2N$ dimensional, and the set of solutions are linearly independent
we have obtained all solutions. 
The linear independence follows since the first set is linearly independent as analytic solutions
with linearly independent values at $z=0$, and the other solutions all have different 
behaviour as $z\to 0$.
\end{proof}

In order to describe the solutions at $\infty$ we introduce the notation
\begin{equation}\label{eq:MVsolastinfty}
\begin{split}
[A,B;C;\al;q]_n & = \displaystyle\prod_{k=0}^{\substack{n-1\\ \gets}} 
(A-\al q^k)^{-1}(B-\al q^k)^{-1} (C-\al q^k) \\
\Theta^\al(A,B;C;q,z) &= \sum_{n=0}^\infty (\al/q;q)_n [A,B;C;\al;q]_n z^n
\end{split}
\end{equation}
where we assume that all inverses of the matrices involved exist. 

\begin{thm}\label{thm:MVBHSatinfty}
Assume that $A$ and $B$ are diagonalizable with non-zero eigenvalues and such that 
the following genericity conditions on the spectra $\si(A)$, $\si(B)$ hold;
\begin{gather*}
\si(A)\cap \si(B)=\emptyset, \quad \si(A)\cap \si(B)q^{1+\N}=\emptyset, 
\quad \si(B)\cap \si(B)q^{1+\N}=\emptyset,
\end{gather*}
Moreover, let $\si(A)=\{a_1,\cdots, a_N\}$, $\si(B)=\{b_1,\cdots, b_N\}$,
with $Af_i^A=a_i f_i^A$, $Bf_i^A=b_i f_i^B$
then the solutions 
\begin{gather*}
z^{-\log_q(b_i)} \Theta^{qb_i}(B,A;C;q,qz^{-1}) f_i^B, \qquad 1\leq i \leq N, \\
z^{-\log_q(a_i)} \Theta^{qa_i}(B,A;C;q,qz^{-1}) (a_i-B)^{-1} f_i^A, \qquad 1\leq i \leq N, \\
\end{gather*}
are linearly independent solutions of \eqref{eq:vectorvBHqDE}. 
\end{thm}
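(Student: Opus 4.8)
The plan is to run the Frobenius method of Theorem \ref{thm:MVBHSatzero}, but now expanding around the other fixed point $z=\infty$ of $z\mapsto qz$, exactly as the scalar solutions $u_3,u_4$ in Proposition \ref{prop:solofBqDEat0atinfty} arose from the Ansatz $\sum a_n z^{-n-\mu}$. So I would substitute $f(z)=\sum_{n=0}^\infty f_n z^{-n-\mu}$ with $f_n\in\C^N$, $f_0\neq 0$, into \eqref{eq:vectorvBHqDE}. Using $f(q^{-1}z)=\sum_n f_n q^{n+\mu} z^{-n-\mu}$ and $f(qz)=\sum_n f_n q^{-n-\mu}z^{-n-\mu}$ and collecting the coefficient of $z^{-p-\mu}$, the top power $z^{1-\mu}$ (the case $p=-1$) produces the indicial equation, while for $p\geq 0$ one obtains a two-term recurrence expressing $f_{p+1}$ through $f_p$.

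The decisive point is that, although $A$ and $B$ need not commute, the coefficient matrices factor cleanly once the factors are kept in the right order. Writing $t=q^\mu$, the indicial coefficient equals $-t^{-1}(t\Id-A)(t\Id-B)$, so the indicial equation is $(q^\mu\Id-A)(q^\mu\Id-B)f_0=0$. This is solved in two ways: either $q^\mu=b_i\in\si(B)$ with $f_0=f_i^B$, or $q^\mu=a_i\in\si(A)$ with $(a_i\Id-B)f_0\in\ker(a_i\Id-A)=\C f_i^A$; since $\si(A)\cap\si(B)=\emptyset$ the matrix $a_i\Id-B$ is invertible and one may take $f_0=(a_i\Id-B)^{-1}f_i^A$, which already explains the two families and their leading coefficients. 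A parallel computation gives, for $p\geq 0$, the recurrence $(q^{p+1+\mu}\Id-A)(q^{p+1+\mu}\Id-B)f_{p+1}=q(q^{p+\mu}-1)(q^{p+\mu+1}\Id-C)f_p$. The genericity hypotheses on $\si(A),\si(B)$ (together with the analogous separations obtained by interchanging $A$ and $B$) guarantee that the leftmost matrix is invertible for every $p\geq 0$ along each indicial exponent, so $f_{p+1}$ is uniquely determined.

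Next I would solve this recurrence in closed form. Setting $\al=qb_i$ for the first family, one checks using $(1-b_iq^p)(b_i;q)_p=(b_i;q)_{p+1}$ and the telescoping identity $(A-\al q^p)(B-\al q^p)\cdot(B-\al q^p)^{-1}(A-\al q^p)^{-1}=\Id$ that $f_n=(b_i;q)_n q^n [B,A;C;qb_i;q]_n f_i^B$ solves the recurrence; the reversal $A\leftrightarrow B$ in the bracket $[B,A;C;\cdot]$ is precisely what is forced by the order of the two inverses. Summing then yields $f(z)=z^{-\log_q b_i}\Theta^{qb_i}(B,A;C;q,qz^{-1})f_i^B$, since the argument $qz^{-1}$ absorbs the factors $q^n z^{-n}$ and $(\al/q;q)_n=(b_i;q)_n$. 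The second family is identical with $b_i$ replaced by $a_i$ and $f_i^B$ replaced by $(a_i\Id-B)^{-1}f_i^A$, producing the stated $\Theta^{qa_i}$-solution.

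Finally, linear independence over the field of $q$-periodic functions I would deduce from the distinct leading asymptotics as $z\to\infty$, mirroring the end of the proof of Theorem \ref{thm:MVBHSatzero}. Each solution behaves like $f_0 z^{-\mu}(\Id+\cO(z^{-1}))$ with $q^\mu$ ranging over $\si(A)\cup\si(B)$; the hypotheses $\si(A)\cap\si(B)=\emptyset$, $\si(A)\cap\si(B)q^{1+\N}=\emptyset$ and $\si(B)\cap\si(B)q^{1+\N}=\emptyset$ (and their $A\leftrightarrow B$ analogues) ensure that no two of the $2N$ exponents $\mu$ differ by an element of $\Z+\tfrac{2\pi i}{\log q}\Z$, so a hypothetical $q$-periodic relation $\sum_i C_i(z)s_i=0$ separates according to powers of $z$; the residual relations then vanish within family B because $\{f_i^B\}$ is a basis of eigenvectors of $B$, and within family A because $\{(a_i\Id-B)^{-1}f_i^A\}$ is the image under the invertible map $(a_i\Id-B)^{-1}$ of the eigenbasis $\{f_i^A\}$ of $A$. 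Since the solution space is $2N$-dimensional, these $2N$ solutions then span it. The step I expect to be the main obstacle is exactly this resonance bookkeeping: matching the stated genericity conditions to the separations actually needed both to invert the leading matrix at every recurrence step and to rule out coincidences among the $2N$ characteristic exponents.
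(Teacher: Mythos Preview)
Your proposal is correct and follows essentially the same route as the paper: the Frobenius Ansatz $\sum_n f_n z^{-n-\mu}$ at $z=\infty$, the factored indicial equation $(q^\mu-A)(q^\mu-B)f_0=0$ giving the two families of initial data, the two-term recurrence solved in closed form via the ordered products $[B,A;C;\alpha;q]_n$, and linear independence read off from the distinct leading exponents. Your closing worry about the genericity bookkeeping is on point---the paper argues this step just as tersely as you do, and indeed one tacitly also needs $\si(A)\cap\si(A)q^{1+\N}=\emptyset$ for the second family, as you anticipated.
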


Note that in case $N=1$, this leads to the solutions $u_3$ and $u_4$ of 
Proposition \ref{prop:solofBqDEat0atinfty}. 

\begin{proof}
Now assume that a solution at $\infty$ has the expansion $\sum_{n=0}^\infty f_n z^{-n-\mu}$ for 
$f_n\in \C^N$.
Plugging this expression in in \eqref{eq:vectorvBHqDE} and rearranging terms we find 
\begin{gather*}
0 = z^{-1-\mu} \bigl( -q^\mu + (A+B) -q^{-\mu} AB\bigr) f_0 
+ \\
\sum_{n=0}^\infty 
z^{-n-\mu} \Bigl( \bigl( q^{n+\mu+1} -(C+q) + q^{-n-\mu}C\bigr) f_n 
+ \bigl( -q^{n+\mu+1} + (A+B) - q^{-1-n-\mu} AB\bigr) f_{n+1} \Bigr).
\end{gather*}
In order to have a solution, the coefficients of the powers of $z$ have to be zero.
The first equation is the indicial equation (for $z=\infty$)\index{indicial equation}
\[
-q^{-\mu}(q^\mu-A)(q^{\mu}-B) f_0 = \bigl( -q^\mu + (A+B) -q^{-\mu} AB\bigr) f_0 = 0
\]
Since $A$ and $B$ are diagonalizable with $N$ different non-zero eigenvalues  
 and $\si(A)\cap \si(B) =\emptyset$,  we
have $2N$ solutions for the indicial equation.
In the first case, $f_0$ is an eigenvector for $B$ with eigenvalue $b_i=q^\mu$, say
$f_0=f_i^B$. So 
\[
q^\mu = b_i, \qquad f_0 = f_i^B, \qquad 1\leq i \leq N. 
\]
In the second case, we find that $(q^\mu-B)f_0$ is an eigenvector of 
$A$ for the eigenvalue $a_i=q^\mu$, i.e. 
\[
q^\mu = a_i, \qquad f_0 = (a_i-B)^{-1} f_i^A, \qquad 1\leq i \leq N,
\]
where $A f_i^A= a_i f_i^A$. 

In the first case we find the recursion 
\begin{gather*}
\bigl( b_iq^{n+1} - (A+B) + b_i^{-1}q^{-1-n} AB\bigr) f_{n+1} = 
\bigl( b_iq^{n+1} -(C+q) + q^{-n}b_i^{-1}C\bigr) f_n \quad \Longrightarrow\\
(b_iq^{n+1}-A)(1-b_i^{-1}q^{-1-n}B) f_{n+1} = (b_iq^{n+1}-C)(1-b_i^{-1}q^{-n}) 
f_n \quad \Longrightarrow\\
(A- b_iq^{n+1}) (B- b_iq^{n+1}) \, f_{n+1} = 
q\, (1-q^nb_i) (C-b_iq^{n+1}) f_n \quad \Longrightarrow\\
f_{n+1} = q\, (1-q^nb_i) 
(B- b_iq^{n+1})^{-1}   (A- b_iq^{n+1})^{-1}  (C-b_iq^{n+1}) f_n \\
= q^{n+1}\,  (b_i;q)_{n+1} [B,A;C; qb_i;q]_{n+1} f_0 = 
 q^{n+1}\,  (b_i;q)_{n+1} [B,A;C; qb_i;q]_{n+1}  f_i^B,
\end{gather*}
and this gives the first set of solutions. Other choices of $\mu$ lead to 
the same solution up to a $q$-constant function. 

In the second case we find the recursion 
\begin{gather*}
\bigl( a_iq^{n+1} -(C+q) + a_i^{-1}q^{-n}C\bigr) f_n 
= \bigl( a_iq^{n+1} - (A+B) + a_i^{-1}q^{-1-n} AB\bigr) f_{n+1}   \quad \Longrightarrow\\ 
f_{n+1} = q (1-a_iq^n) (B-a_iq^{n+1})^{-1}(A-a_iq^{n+1})^{-1} (C-a_iq^{n+1}) f_n = \\
q^{n+1}\,  (a_i;q)_{n+1} [B,A;C; qa_i;q]_{n+1}  (a_i-B)^{-1} f_i^A
\end{gather*}
which gives the second solution. 

Since the singularities of the solutions are all different by the genericity assumptions
on the eigenvalues, linear independence follows. 
\end{proof}

It is now a natural question to ask if one can develop an analoguous theory for 
matrix-valued little $q$-Jacobi polynomials using the solutions developed in 
this section. A first attempt is in \cite{AldeKR}. 
For this one needs to study when a matrix-valued 
basic hypergeometric series terminates, and we can directly see that 
\[
{}_2\Phi_1^1(A,B;C;q,z) f
\]
is a polynomial of degree $l$ if $f\not\in \Ker \bigl( (1-q^kA)(1-q^kB)\bigr)$, 
$1\leq k<l$ and $f\in \Ker \bigl( (1-q^lA)(1-q^lB)\bigr)$.

For an analogous theory of the little $q$-Jacobi function, we need to connect 
the solutions of Theorem \ref{thm:MVBHSatzero} with the solutions of 
Theorem \ref{thm:MVBHSatinfty}, i.e. we need the matrix-valued analogue of 
Watson's formulas \eqref{eq:Watsonint2phi1}, \eqref{eq:4.3.2}. 
First results on this approach can be found in \cite{Jasp}.


\subsection{Exercises}

\begin{enumerate}[1.]
\item Prove that the series in \eqref{eq:MVbasichypseries} converges in 
$\End(\C^N)$ (equipped with the operator norm) for $|z|<1$, so that 
it defines an analytic function. 
Similarly for the series in \eqref{eq:MVsolastinfty}.
\item Show that the series in \eqref{eq:MVbasichypseries} and 
\eqref{eq:MVsolastinfty} can be written in terms of standard basic hypergeometric
series if we assume that the matrices $A$, $B$ and $C$ pairwise commute.
\item Determine more generally solutions in power series at $0$ and $\infty$ 
for the equation 
\begin{equation*} 
(q-z)\Id f(q^{-1}z)+\bigl(Uz-C-q\Id\bigr)f(z)+(C-Vz)f(qz) = 0
\end{equation*}
which reduces to \eqref{eq:vectorvBHqDE} in case $V=AB$ and $U=A+B$. 

\end{enumerate}


\subsection*{Notes}
A slightly more general situation is considered in \cite[\S 4]{AldeKR},
but then only the analytic solutions are considered. 
Conflitti and Schlosser \cite{ConfS} consider also matrix-valued 
basic hypergeometric $q$-difference equations and hypergeometric
differential equation analogues of Tirao \cite{Tira}, but 
the approach in \cite{ConfS} is different to ours. 
In \cite{AldeKR} a family of $2\times 2$-matrix valued 
little $q$-Jacobi polynomials is considered. 
In \cite{AldeKR} matrix-valued analogues of the 
Askey-Wilson polynomials (of the subclass of the Chebyshev polynomials
of the 2nd kind) are constructed using representation of 
quantum symmetric pairs as a $q$-analogue of \cite{KoelvPR1}, \cite{KoelvPR2}. 
Using non-symmetric Askey-Wilson polynomials
a $2\times 2$-matrix-valued orthogonality is constructed by 
Koornwinder and Mazzoco \cite{KoorM}. It is not clear if these
two approaches can be combined to study the matrix-valued Askey-Wilson
polynomials more generally.
A suitable spectral analysis of the matrix-valued $q$-difference operator
has not been developed. 


\printindex


\end{document}